\documentclass[12pt]{article}
\usepackage{amsmath,amsxtra,latexsym,amsthm,amssymb,amscd,amsfonts}
\usepackage[portrait, top=3.5cm, bottom=3cm, left=3.5cm, right=2cm] {geometry}
\usepackage{graphicx}
\usepackage{color}

\theoremstyle{plain}
\setlength{\textwidth}{6in} \setlength{\topmargin}{-0.2in}
\setlength{\textheight}{9.0in} \setlength{\oddsidemargin}{0.3in}

\newtheorem{corollary}{\bf Corollary}[section]
\newtheorem{lemma}{\bf Lemma}[section]
\newtheorem{ex}{\bf Example}[section]

\newtheorem{proposition}{\bf Proposition}[section]

\newtheorem{theorem}{\bf Theorem}[section]

\def\disp{\displaystyle}

\def\Limsup{\mathop{{\rm Lim}\,{\rm sup}}}

\def\tto{\;{\lower 1pt \hbox{$\rightarrow$}}\kern -10pt
\hbox{\raise 2pt \hbox{$\rightarrow$}}\;}

\def\R{{\rm I\!R}}
\def\N{{\rm I\!N}}

\def\co{\mbox{\rm co}\,}
\def\int{\mbox{\rm int}\,}
\def\gph{\mbox{\rm gph}\,}
\def\epi{\mbox{\rm epi}\,}

\def\dom{\mbox{\rm dom}\,}

\def\cl{\mbox{\rm cl}\,}

\def\O{\Omega}

\begin{document}
\pagestyle{myheadings}

\newtheorem{Theorem}{Theorem}[section]
\newtheorem{Proposition}[Theorem]{Proposition}
\newtheorem{Remark}[Theorem]{Remark}
\newtheorem{Lemma}[Theorem]{Lemma}
\newtheorem{Corollary}[Theorem]{Corollary}
\newtheorem{Definition}[Theorem]{Definition}
\newtheorem{Example}[Theorem]{Example}
\renewcommand{\theequation}{\thesection.\arabic{equation}}
\normalsize

\setcounter{equation}{0}

\title{Differentiability Properties of a Parametric Consumer Problem}

\author{V.T.~Huong\footnote{Graduate Training Center, Institute of Mathematics, Vietnam Academy of Science and Technology, 18 Hoang
Quoc Viet, Hanoi 10307, Vietnam; email:
huong263@gmail.com; vthuong@math.ac.vn.},\ \, J.-C. Yao\footnote{Center for General Education, China Medical University, 
			Taichung 40402, Taiwan; email: yaojc@mail.cmu.edu.tw.}, \ \, and \ \, N.D. Yen\footnote{Institute
of Mathematics, Vietnam Academy of Science and Technology, 18 Hoang
Quoc Viet, Hanoi 10307, Vietnam; email: ndyen@math.ac.vn.}}

\maketitle
\date{}
\medskip
\begin{quote}
\noindent {\bf Abstract.}
We study the budget map and the indirect utility function of a parametric consumer problem in a Banach space setting by some advanced tools from set-valued and variational analysis. The Lipschitz-likeness and differentiability properties of the budget map, as well as formulas for finding subdifferentials of the infimal nuisance function, which is obtained from the indirect utility function by changing its sign, are established. Our investigation is mainly based on the paper by Mordukhovich [J. Global Optim. 28 (2004), 347--362] on coderivative analysis of variational systems and the paper of Mordukhovich, Nam, and Yen [Math. Program. 116 (2009), 369--396] on subgradients of marginal functions. Economic meanings of the obtained subdifferential estimates are explained in details.

\noindent {\bf Keywords:}\  Parametric consumer problem, Banach space setting,  budget map, indirect utility function, subdifferential.

\medskip

\noindent {\bf 2010 Mathematics Subject Classification:}\ 91B16, 91B42, 91B38, 46N10, 49J53.

\end{quote}
\section{Introduction}

Various stability properties and a result on solution sensitivity of a consumer problem were obtained in our recent paper \cite{Huong_Yao_Yen_2016}. Namely, focusing on some nice features of the budget map, we established the continuity and the locally Lipschitz continuity of the indirect utility function, as well as the Lipschitz-H\"older continuity of the demand map under a minimal set of assumptions. The recent work of Penot \cite{Penot_2014} was our starting point, while an implicit function theorem of Borwein \cite{Borwein_1986} and a theorem of Yen \cite{Yen_1995}  on solution sensitivity of  parametric variational inequalities are the main tools in the proofs of \cite{Huong_Yao_Yen_2016}. Differentiability properties of the budget map and the \textit{infimal nuisance function}, which is obtained from the indirect utility function by changing its sign, will be investigated in the present paper.

For classical problems in consumption economics, the interested reader is referred to Intriligator \cite[p.~149]{Intriligator_2002} and Nicholson and Snyder \cite[p.~132]{Nicholson_Snyder_2012}. Qualitative properties of the problems of maximizing utility subject to consumer budget constraint have been studied by Takayama \cite[pp.~241--242, 253--255]{Takayama_1974}, Penot \cite{Penot_2013,Penot_2014}, Hadjisavvas and Penot \cite{Hadjisavvasa_Penot 2015}, and many other authors.  

Diewert \cite{Diewert_1974},  Crouzeix \cite{Crouzeix_1983}, Mart\'inez-Legaz and Santos \cite{Martinez-Legaz_Santos_1993}, and Penot \cite{Penot_2014}
studied the duality between the utility function and the indirect utility function.  
 
Relationships between the differentiability properties of the utility function and of the indirect utility function have been discussed by Crouzeix \cite[Sections 2 and 6]{Crouzeix_1983}, who gave sufficient conditions for the indirect utility function in finite dimensions to be differentiable. He also established \cite{Crouzeix_2008} some  relationships between the second-order derivatives of the direct and indirect utility functions. Subdifferentials of the indirect utility function in infinite-dimensional consumer problems have been computed by  Penot \cite{Penot_2013}. 
 
This paper has similar aims as those of \cite{Penot_2013}. We will adopt the general infinite-dimensional setting of the consumer problem which was used in \cite{Penot_2013}. But our approach and results are quite different from the ones of Penot \cite{Penot_2013}. Namely, by an intensive use of some theorems from Mordukhovich \cite{Mordukhovich_2004_JOGO}, we will obtain sufficient conditions for the budget map to be Lipschitz-like at a given point in its graph under weak assumptions. Formulas for computing the Fr\'echet coderivative and the limiting coderivative of the budget map can be also obtained by the results of \cite{Mordukhovich_2004_JOGO} and some advanced calculus rules from \cite{Mordukhovich_2006a}. The results of Mordukhovich \textit{et al.} \cite{Mordukhovich_Nam_Yen_2009} and the just mentioned coderivative formulas allow us to get new results on differential stability of the consumer problem where the price is subject to change. To be more precise, we establish  formulas for computing or estimating the Fr\'echet, limiting, and singular subdifferentials of the infimal nuisance function.

Subdifferential estimates for the infimal nuisance function can lead to interesting economic interpretations. Namely, we will show that if the current price moves forward a direction then, under suitable conditions, the \textit{instant rate of the change of the maximal satisfaction of the consumer} is bounded above and below by real numbers defined by subdifferentials of the infimal nuisance function.
 
The paper is organized as follows. Section 2 formulates the problem of maximizing utility subject to consumer budget constraint and recalls some tools from variational analysis that will be used in the sequel. The Lipchitz-likeness and differentiability properties of the budget map are studied in Section 3. Formulas for computing or estimating the Fr\'echet, limiting, and singular subdifferentials of the infimal nuisance function together with their economic interpretations are given in Section 4. 

\section{Preliminaries}

This section presents some definitions and results that will be used in what follows. 
For a Banach space $X$, the open (resp., closed) unit ball in $X$ will be denoted by $B_X$ (resp., $\bar B_X$). The interior (resp., the closure) of subset $\Omega\subset X$ in the norm topology is abbreviated to $\int \Omega$ (resp., $\overline{\Omega}$). By $\overline{\R}$ we denote the extended real line, i.e., $\overline{\R}=\R\cup\{-\infty, +\infty\}$.

\subsection{Consumer problem}

Let us recall the problem of maximizing utility subject to consumer budget constraint that has been considered in \cite{Penot_2014,Huong_Yao_Yen_2016}. 
The set of \textit{goods} is modeled by a nonempty closed convex cone $X_+$ in a \textit{reflexive} Banach space $X$. The set of \textit{prices} is the positive dual cone of $X_+$
$$Y_+:=\left\{p \in X^*\;:\; \langle p,x \rangle \ge 0\ \, \forall x \in X_+\right\}.$$ 
 We may normalize the prices and assume that the income of the consumer is 1. Then, the \textit{budget map} is the set-valued map $B: Y_+ \rightrightarrows X_+$ associating to each price $p \in Y_+$ the \textit{budget set} 
\begin{equation}\label{budget set}
B(p):=\left\{x\in X_+\;:\; p.x \le 1 \right\}.
\end{equation} For convenience, we put $B(p)=\emptyset$ for every $p\in X^*\setminus Y_+$. In this way, we have a set-valued map $B$ defined on $X^*$ with values in $X$.

We assume that the preferences of the consumer are presented by a function $u:X\rightarrow \overline{\R}$, called the \textit{utility function}. This means that $u(x)\in \R$ for every $x\in X_+$, and a goods bundle $x\in X_+$ is preferred to another one $x' \in X_+$ if and only if $u(x)>u(x')$. For a given price $p\in Y_+$, the problem is to maximize $u(x)$ subject to the constraint $x\in B(p)$. It is written formally as
\begin{equation}\label{initial problem}
\max \left\{u(x)\;:\; x \in B(p)\right\}.
\end{equation}
The \textit{optimal value function} $v:Y_+ \rightarrow \overline{\R}$ of  \eqref{initial problem} is defined by setting
\begin{equation}\label{indirect utility function}
	v(p)=\sup\{u(x):x \in B(p)\} \quad (p\in Y_+).
\end{equation} Since $B(p)=\emptyset$ for all $p\notin Y_+$ and $\sup\emptyset=-\infty$ by an usual convention, one has $v(p)=-\infty$ for all $p\notin Y_+$. In mathematical economics, one often calls $v$ the \textit{indirect utility function}, or the \textit{inverse utility function} of the consumer problem \eqref{initial problem}.
The \textit{demand map} of \eqref{initial problem} is the set-valued map $D: Y_+ \rightrightarrows X_+$ defined by
 \begin{equation}\label{demand_map}
 D(p)=\left\{x\in B(p)\;:\; u(x)=v(p) \right\}  \quad (p \in Y_+).
\end{equation} It is of our convenience to put  $D(p)=\emptyset$ for every $p\in X^*\setminus Y_+$. 

\subsection{Tools from set-valued and variational analysis}

 In this subsection, it is assumed that $X, Y$ are Banach spaces. Let $F: X \rightrightarrows Y$ be a set-valued map. The set ${\rm gph}\, F := \{(x,y) \in X \times Y\,:\, y\in F(x) \}$ is termed the \textit{graph} of $F$. If gph$F$ is closed (resp., convex) in $X\times Y$, then $F$ is said to be \textit{closed} (resp., \textit{convex}). Here, the norm in the product space $X \times Y$ is given by $\| (x, y) \|=\|x\|+\|y\|$ with the first norm in the right-hand side denoting the norm in $X$ and the second one standing for the norm in $Y$. 

One says that $F$ is \textit{Lipschitz-like}, or $F$ has \textit{the Aubin property}, at $(\bar x, \bar y)\in {\rm gph} F$,  if there exists a constant $l>0$ along with neighborhoods $U$ of $\bar x$, $V$ of $\bar y$, such that
$$F(x) \cap V \subset F(x')+l\| x-x' \|\bar{B}_Y \quad \forall x, x' \in U.$$
This fundamental concept was suggested by Aubin \cite{Aubin_1984}. 

If $F: X \rightrightarrows X^*$ be a set-valued map from a Banach space to its dual space, the notation 
\begin{align*}
\Limsup_{x\rightarrow \bar{x}} F(x):=\{ x^*\in X^*
\,:\,  \exists & \mbox{ sequences }x_k \rightarrow \bar{x}  \mbox{ and }  x_k^* \overset{\omega^*}{\rightarrow}  x^* \\
 &\mbox{ with } x_k^*\in F(x_k) \mbox{ for all } k\in \N \},
\end{align*}
where $\N:=\{1, 2, \dots \}$, signifies the \textit{sequential Painlev\'e-Kuratowski upper/outer limit} with respect to the norm topology of $X$ and the weak* topology (that denoted by $\omega^*$) of $X^*$. For a function $\varphi : X \rightarrow \overline \R$ and a set $\Omega \subset X$, the notations $x \overset{\varphi}{\rightarrow} \bar x$ and $x \overset{\Omega}{\rightarrow} \bar x$, respectively, mean 
$x\rightarrow \bar x$ with $\varphi(x) \rightarrow \varphi(\bar x)$ and $x\rightarrow \bar x$ with $x\in \Omega$.

We now recall several basic concepts of generalized differentiation. For more details, the reader is referred to \cite{Mordukhovich_2006a,Mordukhovich_2006b}.

Let $\Omega$ be a subset of $X$. Given $x\in \Omega$ and $\varepsilon \ge 0$, define the set of $\varepsilon$\textit{-normals} to $\Omega$ at $x$ by
\begin{equation}\label{e-normal cone def.}
\widehat{N}_\varepsilon(x; \Omega):=\left\{x^*\in X^* \;:\; \limsup_{u \overset{\Omega}{\rightarrow} x}\dfrac{\langle x^*, u-x \rangle}{\|u-x\|} \le \varepsilon \right\}.
\end{equation}
When $\varepsilon=0$, elements of \eqref{e-normal cone def.} are called \textit{Fr\'echet normals} and their collection, denoted by $\widehat{N}(x; \Omega)$, is the\textit{ prenormal cone} or \textit{Fr\'echet normal cone} to $\Omega$ at $x$. If $x\notin \Omega$, we put $\widehat{N}_\varepsilon(x; \Omega)=\emptyset$ for all $\varepsilon \ge 0$. Consider a vector $\bar{x} \in \Omega$. One say that $x^* \in X^*$ is a \textit{limiting/ Mordukhovich normal} to $\Omega$ at $\bar x$ if there are sequences $\varepsilon_k \downarrow 0, x_k \overset{\Omega}{\rightarrow} \bar x $, and $x_k^* \overset{\omega^*}{\rightarrow}  x^*$ such that $x_k^*\in \widehat{N}_{\varepsilon_k}(x_k; \Omega)$ for all $k\in \N$. The collection of such normals 
\begin{equation*}\label{Mor. normal cone def.}
N(\bar x; \Omega):=\Limsup_{x\rightarrow \bar x \\ \varepsilon \downarrow 0}\widehat{N}_\varepsilon(x; \Omega)
\end{equation*}
is the \textit{limiting/ Mordukhovich normal cone} to $\Omega$ at $x$. Put $N(\bar x; \Omega)=\emptyset$ for $\bar x \notin \Omega$.

Let $F: X \rightrightarrows Y$ be a set-valued map. Given $(\bar x, \bar y) \in X\times Y$ and $\varepsilon \ge 0$, the $\varepsilon$-\textit{coderivative} of $F$ at $(\bar x, \bar y)$ is the set-valued map $\widehat{D}_\varepsilon^*F(\bar x, \bar y): Y^* \rightrightarrows X^*$ with the values
\begin{equation}\label{e-coderivative def.}
\widehat{D}_\varepsilon^*F(\bar x, \bar y)(y^*):=\left\{x^*\in X^* \;:\; (x^*, -y^*)\in \widehat N_\varepsilon((\bar x, \bar y); \gph F)\right\}.
\end{equation}
When $\varepsilon=0$ in \eqref{e-coderivative def.}, this construction is called the \textit{precoderivative} or \textit{Fr\'echet coderivative} of $F$ at $(\bar x, \bar y)$ and is denoted by $\widehat{D}^*F(\bar x, \bar y)$. The \textit{limiting/ Mordukhovich} \textit{coderivative} of $F$ at $(\bar x, \bar y)$ is a set-valued map $D^*F(\bar x, \bar y): Y^* \rightrightarrows X^*$ defined by
\begin{equation*}\label{normal coderivative def.}
D^*F(\bar x, \bar y)(y^*)=\left\{x^*\in X^* \;:\; (x^*,-y^*)\in N((\bar x, \bar y); \gph F)\right\}.
\end{equation*}
It follows from the definition that $\widehat{D}_\varepsilon^*F(x, y)(y^*)=D^*F(\bar x, \bar y)(y^*)=\emptyset$ for all $\varepsilon \ge 0$ and $y^*\in Y^*$ if $(x, y)\notin \gph F$. 
We shall omit $\bar y$ in the coderivative notation if $F(\bar x)=\{\bar y\}$. It follows from above definitions that
\begin{equation}\label{relations between coderivatives}
\widehat{D}^*F(\bar x, \bar y)(y^*)\subset D^*F(\bar x, \bar y)(y^*)
\end{equation}
for any $y^*\in Y^*$.
There are examples showing that inclusions in \eqref{relations between coderivatives} is strict (see \cite[p.~43]{Mordukhovich_2006a}). When \eqref{relations between coderivatives} holds as equality, $F$ is said to be \textit{graphically regular} at $(\bar x, \bar y)$. As shown in the following propositions, the class of graphically regular mappings includes convex set-valued maps and strictly differentiable functions. The reader is referred to \cite[Def.~1.13]{Mordukhovich_2006a} for the definition of strictly differentiable function.
\begin{proposition}\label{code. of convex map}{\rm(See \cite[Prop.~1.37]{Mordukhovich_2006a})} Let $F: X \rightrightarrows Y$ be a convex set-valued map. Then $F$ is graphically regular at $(\bar x, \bar y)\in \gph F$  and one has the codervative representations
\begin{align*}
 \widehat D^*F(\bar x, \bar y)(y^*)&= D^*F(\bar x, \bar y)(y^*)\\
 &=\left\{x^*\in X^*\ :\ \langle x^*, \bar x\rangle-\langle y^*, \bar y\rangle=\max_{(x, y)\in {\rm gph}\, F}[\langle x^*, x\rangle-\langle y^*, y\rangle]\right\}.
 \end{align*}
 \end{proposition}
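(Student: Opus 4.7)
The plan is to reduce everything for $F$ to the convex set $\Omega:=\gph F\subset X\times Y$, and then to invoke two facts from convex analysis: for a convex set $\Omega$ and any $\bar z\in\Omega$, (i) the Fr\'echet normal cone $\widehat N(\bar z;\Omega)$ coincides with the classical normal cone $\{z^*:\langle z^*,z-\bar z\rangle\le 0\ \forall z\in\Omega\}$ of convex analysis, and (ii) the limiting normal cone $N(\bar z;\Omega)$ coincides with $\widehat N(\bar z;\Omega)$. Once (i) and (ii) are in hand, the coderivative conclusions follow by unwinding definitions.

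For (i), the inclusion $\supset$ is immediate from \eqref{e-normal cone def.}, since the relevant numerator is already nonpositive. For the reverse inclusion, given $z^*\in\widehat N(\bar z;\Omega)$ and any fixed $z\in\Omega\setminus\{\bar z\}$, I would exploit convexity by noting that $z_t:=\bar z+t(z-\bar z)\in\Omega$ for all $t\in(0,1]$ and $z_t\overset{\Omega}{\rightarrow}\bar z$ as $t\downarrow 0$; substituting $u=z_t$ into the $\limsup$ condition defining $\widehat N$ yields $\langle z^*,z-\bar z\rangle\le 0$, proving (i).

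For (ii), the inclusion $\widehat N\subset N$ is trivial (take constant sequences). Conversely, let $z^*\in N(\bar z;\Omega)$ with approximating sequences $z_k\overset{\Omega}{\rightarrow}\bar z$, $\varepsilon_k\downarrow 0$, $z_k^*\overset{\omega^*}{\rightarrow}z^*$, and $z_k^*\in\widehat N_{\varepsilon_k}(z_k;\Omega)$. The same convexity trick as above promotes the local $\limsup$ inequality defining $\widehat N_{\varepsilon_k}(z_k;\Omega)$ to the global bound $\langle z_k^*,z-z_k\rangle\le\varepsilon_k\|z-z_k\|$ valid for every $z\in\Omega$. Passing to the limit $k\to\infty$, using weak* convergence of $z_k^*$ together with norm convergence of $z_k$, gives $\langle z^*,z-\bar z\rangle\le 0$ for all $z\in\Omega$, whence $z^*\in\widehat N(\bar z;\Omega)$ by (i).

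Finally, applying (i)--(ii) with $\Omega=\gph F$ and $\bar z=(\bar x,\bar y)$ immediately yields graphical regularity, i.e.\ $\widehat D^*F(\bar x,\bar y)(y^*)=D^*F(\bar x,\bar y)(y^*)$. Translating $(x^*,-y^*)\in\widehat N((\bar x,\bar y);\gph F)$ through the convex-analysis description from (i) gives $\langle x^*,x-\bar x\rangle-\langle y^*,y-\bar y\rangle\le 0$ for every $(x,y)\in\gph F$; since equality is attained at $(x,y)=(\bar x,\bar y)\in\gph F$, this is exactly the sup/max identity stated (and the supremum is realized, hence written as a $\max$). I expect the main obstacle to be step (ii): the convexity-based promotion of the $\limsup$ inequality to one valid uniformly over all of $\Omega$ is what makes the weak* limit passage feasible; the remainder is bookkeeping between normal-cone language on $\gph F$ and coderivative language for $F$.
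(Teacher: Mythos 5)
Your proof is correct. The paper does not prove this proposition at all---it is quoted verbatim from Mordukhovich's book (Prop.~1.37), and your argument is essentially the standard one used there: reduce to the convex set $\Omega=\gph F$, show via the ray $z_t=\bar z+t(z-\bar z)$ that the Fr\'echet (and $\varepsilon$-) normal cones to a convex set collapse to the normal cone of convex analysis, pass to the limit to identify the limiting cone with it, and then unwind the coderivative definitions into the support-function identity. The only step you leave implicit is that the weak$^*$-convergent sequence $z_k^*$ is norm-bounded (uniform boundedness principle), which is needed to kill the term $\langle z_k^*,\bar z-z_k\rangle$ in the limit passage of step (ii); with that remark added, the argument is complete.
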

 \begin{proposition}\label{Coderi. of diffe. functions}{\rm(See \cite[Prop.~1.38]{Mordukhovich_2006a})} 
 Let $f: X \rightarrow Y$ be Fr\'echet differentiable at $\bar x$. Then
 \begin{equation*}
\widehat D^*f(\bar x)(y^*)=\{\nabla f(\bar x)^*y^*\} \quad (\forall y^* \in Y^*),
 \end{equation*}
 where $\nabla f(\bar x)^*:Y^* \rightarrow X^*$ denotes the adjoint operator of the Fr\'echet derivative $\nabla f(\bar x):X \rightarrow Y$. If, moreover, $f$ is strictly differentiable at $\bar x$, then 
 \begin{equation*}
 D^*f(\bar x)(y^*)=\widehat D^*f(\bar x)(y^*)=\{\nabla f(\bar x)^*y^*\} \quad  (\forall y^* \in Y^*),
 \end{equation*}
 and thus $f$ is graphically regular at $\bar x$.
 \end{proposition}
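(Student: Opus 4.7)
The plan is to unfold the definitions of the Fréchet and limiting coderivatives in terms of normal cones to $\gph f$ and then exploit the Fréchet (resp., strict) differentiability of $f$ to reduce the defining limsup inequality to a linear one, from which $A^*y^*$ emerges as the unique candidate. Throughout, write $\bar y = f(\bar x)$, $A = \nabla f(\bar x)$, and $A^*$ for its Banach-space adjoint.

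First I would treat the Fréchet differentiable case. By \eqref{e-coderivative def.} and \eqref{e-normal cone def.} with $\varepsilon=0$, $x^*\in\widehat D^*f(\bar x)(y^*)$ is equivalent to
\[
\limsup_{u\to\bar x}\dfrac{\langle x^*-A^*y^*,u-\bar x\rangle \,-\, \langle y^*,\, f(u)-\bar y - A(u-\bar x)\rangle}{\|u-\bar x\|+\|f(u)-\bar y\|}\le 0,
\]
after decomposing $\langle y^*, f(u)-\bar y\rangle = \langle A^*y^*, u-\bar x\rangle + \langle y^*, f(u)-\bar y - A(u-\bar x)\rangle$. The second term in the numerator is $o(\|u-\bar x\|)$ by Fréchet differentiability, and the denominator is comparable to $\|u-\bar x\|$ since $\|f(u)-\bar y\|\le(\|A\|+1)\|u-\bar x\|$ near $\bar x$. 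Hence the inequality collapses to $\limsup_{u\to\bar x}\langle x^*-A^*y^*, u-\bar x\rangle/\|u-\bar x\|\le 0$; testing $u-\bar x = \pm th$ for an arbitrary $h\in X$ and $t\downarrow 0$ forces $x^* = A^*y^*$. The reverse check that $A^*y^*$ truly satisfies the limsup inequality is immediate from the same expansion.

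For the strict differentiability assertion, \eqref{relations between coderivatives} and the first part yield $\{A^*y^*\}\subset D^*f(\bar x)(y^*)$, so only the opposite inclusion requires work. Pick $x^*\in D^*f(\bar x)(y^*)$ and extract sequences $\varepsilon_k\downarrow 0$, $x_k\to\bar x$, and $(x_k^*,-y_k^*)\overset{\omega^*}{\rightarrow}(x^*,-y^*)$ with $(x_k^*,-y_k^*)\in\widehat N_{\varepsilon_k}((x_k,f(x_k));\gph f)$. Strict differentiability of $f$ at $\bar x$ yields, for each $\eta>0$, a neighborhood $U$ of $\bar x$ such that $\|f(u)-f(u')-A(u-u')\|\le\eta\|u-u'\|$ whenever $u,u'\in U$. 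Repeating the computation of the previous paragraph with base point $x_k$ in place of $\bar x$, but now with an extra slack $\varepsilon_k$ on the right and an extra error of size $\eta\|y_k^*\|\cdot\|u-x_k\|$ in the numerator, produces a norm estimate of the form $\|x_k^*-A^*y_k^*\|\le C(\varepsilon_k+\eta_k\|y_k^*\|)$ with $\eta_k\downarrow 0$. Since weak*-convergent sequences in $Y^*$ are norm-bounded, passing to the weak*-limit delivers $x^* = A^*y^*$, and graphical regularity follows at once from the resulting equality $\widehat D^*f(\bar x)(y^*) = D^*f(\bar x)(y^*)$.

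The delicate step, and where I expect the real work to concentrate, is converting the $\varepsilon_k$-normal inequality at the perturbed base points $x_k$ into a genuine dual-norm bound on $x_k^*-A^*y_k^*$: the limsup condition a priori controls only directional behavior of this difference, and the uniformity built into the definition of strict differentiability (as opposed to pointwise Fréchet differentiability) is precisely what lets this directional control be upgraded to a norm estimate uniform over the unit sphere of $X$. Without that uniformity one would only recover the Fréchet-level conclusion of the first assertion, which would not suffice to pin down the limiting coderivative.
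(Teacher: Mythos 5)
The paper does not prove this proposition at all: it is quoted verbatim from Mordukhovich's book (Prop.~1.38 there), so there is no in-paper argument to compare against. Your proof is correct and is essentially the standard one from that reference: the decomposition $\langle y^*, f(u)-\bar y\rangle=\langle A^*y^*,u-\bar x\rangle+\langle y^*, f(u)-\bar y-A(u-\bar x)\rangle$ reduces the $0$-normal inequality to $\limsup\langle x^*-A^*y^*,u-\bar x\rangle/\|u-\bar x\|\le 0$, and in the limiting case the uniform two-point estimate from strict differentiability, applied at the base points $x_k$, does upgrade the directional bound to $\|x_k^*-A^*y_k^*\|\le C(\varepsilon_k+\eta_k\|y_k^*\|)$ exactly as you say (using that $\|y_k^*\|$ is bounded by Banach--Steinhaus and that $A^*$ is weak$^*$-to-weak$^*$ continuous).
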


A set $\Omega \subset X$ is said to be \textit{sequentially normally compact} (SNC) at $\bar x \in \Omega$ if for any sequence $(\varepsilon_k, x_k, x^*_k)\in [0, \infty)\times \Omega\times X^*$ satisfying 
\begin{equation*}
\varepsilon_k\downarrow 0,\ \, x_k \rightarrow \bar x,\ \, x_k^*\in\widehat N_{\varepsilon_k}(x_k; \Omega),\ \,  \mbox{and } x^*_k\overset{\omega^*}{\rightarrow} 0
\end{equation*}
one has $\|x_k^*\|\rightarrow 0$ as $k\rightarrow \infty$. A map $F: X \rightrightarrows Y$ is \textit{sequentially normally compact} (SNC) at $(\bar x, \bar y)\in \gph F$ if its graph is SNC at $(\bar x, \bar y)$. Let $\O_1 \subset X_1,\ \O_2\subset X_2$ be subsets of Banach spaces and $(x_1, x_2) \in \O_1\times\O_2$. It follows from definition of SNC of sets and properties of the normal cone to a Cartesian product (see \cite[Proposition 1.2]{Mordukhovich_2006a}) that $\O_1\times\O_2$ is SNC at $(x_1, x_2)$ if $\O_1$ is SNC at $x_1$ and $\O_2$ is SNC at $x_2$.

One says that $\Omega\subset X$ is \textit{locally closed} around $\bar x \in \Omega$ if there is a neighborhood $U$ of $\bar x$ for which $\Omega \cap U$ is closed. If the set $\gph F$ of some set-valued map $F$ is locally closed around $(\bar x, \bar y)\in \gph F$, then $F$ is said to be \textit{locally closed} around $(\bar x, \bar y)$. Clearly, if $\gph F$ is a closed set, then $F$ is locally closed around any point belonging to its graph.

\medskip
For an extended real-valued function $\varphi: X \rightarrow \overline{\R}$, one defines the \textit{epigraph} and \textit{hypograph} of $\varphi$ by  $\epi \varphi=\{(x, \mu)\in X\times \R \,:\, \mu \geq \varphi (x)\}$ and 
$${\rm hypo}\, \varphi=\{(x, \mu)\in X\times \R \,:\, \mu \leq \varphi (x)\}.$$

The  \textit{Fr\'echet subdifferential}, \textit{limiting/Mordukhovich subdifferential}, and the \textit{singular subdifferential} of $\varphi$ at $\bar x\in X$ with $|\varphi (\bar x)|< \infty$ are defined, respectively, by
\begin{equation*}
\widehat \partial\varphi(\bar x):=\left\{x^*\in X \;:\; (x^*, -1)\in \widehat N\big((\bar x, \varphi(\bar x)); \epi \varphi \big)\right\},
\end{equation*}
\begin{equation*}
\partial\varphi(\bar x):=\left\{x^*\in X \;:\; (x^*, -1)\in N\big((\bar x, \varphi(\bar x)); \epi \varphi \big)\right\},
\end{equation*}
and 
\begin{equation*}
\partial^{\infty}\varphi(\bar x):=\left\{x^*\in X \;:\; (x^*, 0)\in N\big((\bar x, \varphi(\bar x)); \epi \varphi \big)\right\}.
\end{equation*}
If $|\varphi (x)|= \infty$, then one puts $\widehat \partial\varphi(\bar x)=\partial\varphi(\bar x)=\partial^{\infty}\varphi(\bar x)=\emptyset$. It follows from above definitions that for all $\bar x \in X$, one has $\widehat \partial\varphi(\bar x)\subset \partial\varphi(\bar x)$. When this holds as equality, one says that $\varphi$ is \textit{lower regular} at $\bar x$.
The \textit{Fr\'echet upper subdifferential}, \textit{limiting/ Mordukhovich upper subdifferential}, and \textit{singular upper subdifferential} of $\varphi$ at $\bar x$ are respectively defined by
\begin{align*}
\widehat \partial^+\varphi(\bar x):=-\widehat \partial(-\varphi)(\bar x)\label{F-upper_subdifferential},\quad
 \partial^+\varphi(\bar x):=-\partial(-\varphi)(\bar x),
\end{align*}
and 
\begin{equation*}
\partial^{\infty,+}\varphi(\bar x):=-\partial^\infty(-\varphi)(\bar x).
\end{equation*}
When $\varphi(\bar x)$ is finite, it follows from definition of the singular subdifferential and the singular upper subdifferential that $\partial^{\infty}\varphi(\bar x)$ and $\partial^{\infty,+}\varphi(\bar x)$ always contain zero, while (see \cite[Corollary 1.81]{Mordukhovich_2006a}) $\partial^{\infty}\varphi(\bar x)=\{0\}$ (and therefore, $\partial^{\infty,+}\varphi(\bar x)=\{0\}$) if $\varphi$ is \textit{locally Lipschitz} around $\bar x$, i.e., there is a neighborhood $U$ of $\bar x$ and a constant $\ell \geq 0$ such that 
\begin{equation*}
|\varphi (x)-\varphi (x')|\le \ell \|x-x'\| \quad (x,\ x' \in U).
\end{equation*}

\section{The Lipchitz-likeness and differentiability properties of the budget map}
We will represent the budget map $B(\cdot):Y_+\rightrightarrows X_+$ as a restriction of the solution map $\widetilde B(\cdot)$ of a parametric generalized equation. Then, we estimate the coderivatives of $\widetilde B(\cdot)$ and obtain a sufficient condition for its Lipschitz-like property. Finally, by the close relationship between the multifunctions $B(\cdot)$ and $\widetilde B(\cdot)$, we obtain  a sufficient condition for the Lipschitz-likeness of the map at point $(\bar p, \bar x)$ in its graph, where $\bar p$ may not belong to the interior of the cone of prices $Y_+$  as well as formula for coderivatives of the budget map $B(\cdot)$.

We will need two theorems from \cite{Mordukhovich_2004_JOGO} on parametric generalized equations, which are recalled now. Let $X, Y, Z$ be Banach spaces.  Consider a \textit{parametric generalized equation}
\begin{equation}\label{para. generalized equa. def.}
0\in f(x, y)+Q(x, y)
\end{equation}
with the decision variable $y$ and the parameter $x$, where $f:X\times Y\rightarrow Z$ is a single-valued map while $Q:X\times Y\rightrightarrows Z$ is a set-valued map. The \textit{solution map} to \eqref{para. generalized equa. def.} is the set-valued map given by
\begin{equation}\label{solution map def.}
S(x):=\left\{y\in Y \;:\; 0\in f(x, y)+Q(x, y)\right\} \quad (x\in X).
\end{equation}

The limiting coderivative of the solution map \eqref{solution map def.} can be estimated or computed in term of the initial data of \eqref{para. generalized equa. def.} by using the following result. 
\begin{theorem}{\rm (See \cite[Theorem 4.1]{Mordukhovich_2004_JOGO})}\label{Thm. 4.1-Mor-2004-JOGO}.
Suppose that $X, Y, Z$ are Asplund spaces, $(\bar x, \bar y)$ satisfy \eqref{para. generalized equa. def.}, $f$ is continuous around $(\bar x, \bar y)$, and $Q$ is locally closed around $(\bar x, \bar y,\bar z)$ with $\bar z:=-f(\bar x, \bar y)$. If $Q$ is SNC at $(\bar x, \bar y, \bar z)$, and
\begin{align}\label{regularity condition}
\left[(x^*, y^*)\in D^*f(\bar x, \bar y)(z^*)\cap\left(-D^*Q(\bar x, \bar y, \bar z)(z^*)\right) \right]
\Longrightarrow (x^*, y^*, z^*)=(0, 0, 0),
\end{align}
then the inclusion
\begin{align}\label{code. of a solution map formula}
\begin{split}
 D^*S(\bar x, \bar y)(y^*) \subset  \big\{x^*\in X^* \;:\;&\exists z^* \in Z^* \mbox{ with }\\
 & (x^*, -y^*)\in D^*f(\bar x, \bar y)(z^*)+D^*Q(\bar x, \bar y, \bar z)(z^*)\big\}
\end{split}
 \end{align} 
 holds for every $y^*\in Y^*$. If, in addition, $f$ is strictly differentiable at $(\bar x, \bar y)$ and $Q$ is graphically regular at $(\bar x, \bar y, \bar z)$, then $S$ is graphically regular at $(\bar x, \bar y)$ and \eqref{code. of a solution map formula} holds as equality.
\end{theorem}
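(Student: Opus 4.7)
The plan is to embed $\gph S$ into $X \times Y \times Z$ by separating the generalized equation into the two conditions ``$z=-f(x,y)$'' and ``$z \in Q(x,y)$,'' and then to apply the limiting--normal--cone intersection rule in Asplund spaces. Set $\bar z := -f(\bar x,\bar y)$, $\bar w := (\bar x, \bar y, \bar z)$, and introduce
\[
\Omega_1 := \gph(-f) = \{(x,y,z) \in X\times Y\times Z : z=-f(x,y)\}, \qquad \Omega_2 := \gph Q.
\]
Continuity of $f$ and local closedness of $Q$ make $\Omega_1$ and $\Omega_2$ locally closed around $\bar w$, and the map $(x,y) \mapsto (x,y,-f(x,y))$ is a homeomorphism of $\gph S$ onto $\Omega_1 \cap \Omega_2$.

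First I would translate the relevant normal cones into coderivatives. Unwinding the definition of $\widehat N_\varepsilon$ on $\Omega_1$ and substituting $z-\bar z = -(f(x,y)-f(\bar x,\bar y))$ yields $(x^*,y^*,z^*) \in \widehat N_\varepsilon(\bar w; \Omega_1) \Leftrightarrow (x^*,y^*) \in \widehat D^*_\varepsilon f(\bar x,\bar y)(z^*)$, and weak$^*$--limit passage gives the analogous equivalence for $N$ and $D^*f$. Directly from \eqref{e-coderivative def.}, $(x^*,y^*,z^*) \in N(\bar w; \Omega_2) \Leftrightarrow (x^*,y^*) \in D^*Q(\bar x,\bar y,\bar z)(-z^*)$. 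Consequently the SNC hypothesis on $Q$ at $\bar w$ becomes SNC of $\Omega_2$, and the qualification \eqref{regularity condition} becomes exactly the Mordukhovich intersection qualification $N(\bar w; \Omega_1) \cap \bigl(-N(\bar w; \Omega_2)\bigr) = \{0\}$.

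Next I would lift normals. The elementary bound $\|(x,y)-(\bar x,\bar y)\| \le \|(x,y,-f(x,y))-\bar w\|$ shows that every $\varepsilon$-normal $(a^*,b^*)$ to $\gph S$ at $(\bar x,\bar y)$ produces an $\varepsilon$-normal $(a^*,b^*,0)$ to $\Omega_1 \cap \Omega_2$ at $\bar w$, and weak$^*$--limit passage lifts this to the limiting cone. Applying the intersection rule from \cite{Mordukhovich_2006a} then yields $N(\bar w; \Omega_1 \cap \Omega_2) \subset N(\bar w; \Omega_1) + N(\bar w; \Omega_2)$. Decomposing $(a^*,b^*,0) = (u^*,v^*,z^*) + (\alpha^*,\beta^*,\gamma^*)$ with the two summands in $N(\bar w; \Omega_1)$ and $N(\bar w; \Omega_2)$ respectively, the $Z$--coordinate forces $\gamma^* = -z^*$; together with the translations above this gives $(a^*,b^*) \in D^*f(\bar x,\bar y)(z^*) + D^*Q(\bar x,\bar y,\bar z)(z^*)$, which upon setting $b^* = -y^*$ is precisely \eqref{code. of a solution map formula}. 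For the equality clause, strict differentiability of $f$ together with Proposition~\ref{Coderi. of diffe. functions} renders $\Omega_1$ graphically regular, graphical regularity of $Q$ renders $\Omega_2$ normally regular at $\bar w$, so the intersection rule holds as equality; the lift in the previous step reverses because $D^*f(\bar x,\bar y)$ reduces to the single--valued adjoint $\nabla f(\bar x,\bar y)^*$, so $z^*$ is uniquely determined by $(a^*,b^*)$ and every element of the right--hand side actually arises from a normal to $\gph S$. This gives graphical regularity of $S$ and equality in \eqref{code. of a solution map formula}.

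\emph{Main obstacle.} The delicate step is the transfer of normals between $\gph S \subset X\times Y$ and $\Omega_1 \cap \Omega_2 \subset X\times Y\times Z$: because $f$ is only assumed continuous, no differentiable chain rule is available for the preimage $\gph S = G^{-1}(\gph Q)$ under $G(x,y)=(x,y,-f(x,y))$, and the argument must be purely metric/topological. One exploits that the added $Z$--coordinate is uniquely pinned down by $(x,y)$ (so no $z^*$--direction normal is required for $\gph S$), while the SNC hypothesis on $Q$ keeps the weak$^*$ passages inside the intersection rule nondegenerate in the infinite--dimensional Asplund setting.
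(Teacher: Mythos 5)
The paper offers no proof of this statement: it is quoted verbatim from \cite[Theorem 4.1]{Mordukhovich_2004_JOGO}, so there is no internal argument to compare against. Your outline reconstructs the standard proof from that source correctly — representing $\gph S$ through the intersection $\gph(-f)\cap\gph Q$ in $X\times Y\times Z$, translating normals to the two pieces into $D^*f$ and $D^*Q$, lifting $\varepsilon$-normals of $\gph S$ to the intersection, identifying \eqref{regularity condition} with the Mordukhovich intersection qualification, and using the SNC hypothesis on $Q$ to apply the intersection rule in Asplund spaces. The one loose phrase is in the equality clause: $z^*$ need not be ``uniquely determined by $(a^*,b^*)$,'' and the reverse inclusion is more cleanly obtained from the Fr\'echet-normal rule for inverse images under the strictly differentiable, locally Lipschitz map $(x,y)\mapsto(x,y,-f(x,y))$ combined with $\widehat D^*Q=D^*Q$; this does not affect the soundness of the plan.
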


The next theorem states a necessary and sufficient condition for Lipschitz-like property of the solution map \eqref{solution map def.}.

\begin{theorem}{\rm (See \cite[Theorem 4.2]{Mordukhovich_2004_JOGO})}\label{Thm. 4.2-Mor-2004-JOGO}
Let $X, Y, Z$ be Asplund spaces and let $(\bar x, \bar y)$ satisfy \eqref{para. generalized equa. def.}. Suppose that $f$ is strictly differentiable at $(\bar x, \bar y)$ and that $Q$ is locally closed around $(\bar x, \bar y,\bar z)$ with $\bar z:=-f(\bar x, \bar y)$, graphically regular and SNC at this point. If
\begin{equation}\label{regularity condition-a}
\big [(0,0)\in \nabla f(\bar x, \bar y)^*z^*+D^*Q(\bar x, \bar y, \bar z)(z^*) \big]\Longrightarrow z^*=0,\end{equation} then $S$ is Lipschitz-like at $(\bar x, \bar y)$ if and only if 
\begin{equation}\label{Lipschitz-like condition}
\big [(x^*, 0)\in \nabla f(\bar x, \bar y)^*z^*+D^*Q(\bar x, \bar y, \bar z)(z^*) \big]\Longrightarrow [x^*=0,\, z^*=0].
\end{equation}
\end{theorem}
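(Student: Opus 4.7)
My plan is to reduce the claim to the Mordukhovich coderivative criterion for the Aubin property, which says that in the Asplund setting a locally closed multifunction $S$ is Lipschitz-like at $(\bar x, \bar y) \in \gph S$ if and only if $\gph S$ is (partially) sequentially normally compact at that point and the kernel condition $D^* S(\bar x, \bar y)(0) = \{0\}$ holds. Under the strict differentiability of $f$ and the graphical regularity of $Q$, the mixed and normal coderivatives of $S$ coincide at $(\bar x, \bar y)$, so I would work throughout with the normal coderivative. The proof splits into three steps: (i) obtain a usable description of $D^* S(\bar x, \bar y)(0)$; (ii) transfer the SNC property from $Q$ to $S$; and (iii) match the resulting kernel condition with \eqref{Lipschitz-like condition}.

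For step (i), I would apply Theorem~\ref{Thm. 4.1-Mor-2004-JOGO} to the generalized equation \eqref{para. generalized equa. def.}. Since $f$ is strictly differentiable, Proposition~\ref{Coderi. of diffe. functions} yields $D^*f(\bar x, \bar y)(z^*) = \{\nabla f(\bar x, \bar y)^* z^*\}$, so the general qualification \eqref{regularity condition} collapses exactly to the assumed condition \eqref{regularity condition-a}. Graphical regularity of $Q$ together with that of $f$ (again via Proposition~\ref{Coderi. of diffe. functions}) lets Theorem~\ref{Thm. 4.1-Mor-2004-JOGO} hold as equality in \eqref{code. of a solution map formula}; specializing to $y^* = 0$ delivers
\begin{equation*}
x^* \in D^* S(\bar x, \bar y)(0) \ \Longleftrightarrow\ \exists z^* \in Z^* \text{ with } (x^*, 0) \in \nabla f(\bar x, \bar y)^* z^* + D^* Q(\bar x, \bar y, \bar z)(z^*).
\end{equation*}

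For step (ii), I would invoke the SNC calculus in Asplund spaces. Writing $\gph S$ as the preimage of $\gph Q$ under the strictly differentiable mapping $(x, y) \mapsto (x, y, -f(x, y))$, the SNC of $Q$ at $(\bar x, \bar y, \bar z)$ together with \eqref{regularity condition-a} propagates to SNC of $\gph S$ at $(\bar x, \bar y)$ via Mordukhovich's preimage/constraint-system rule for SNC sets. This gives the compactness ingredient the criterion demands.

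Step (iii) is then a direct book-keeping. If $S$ is Lipschitz-like, the Mordukhovich criterion forces $D^*S(\bar x, \bar y)(0) = \{0\}$; by (i), any $z^*$ with $(x^*, 0) \in \nabla f(\bar x, \bar y)^* z^* + D^*Q(\bar x, \bar y, \bar z)(z^*)$ must then satisfy $x^* = 0$, and with $x^* = 0$ the hypothesis \eqref{regularity condition-a} forces $z^* = 0$, which is \eqref{Lipschitz-like condition}. Conversely, \eqref{Lipschitz-like condition} immediately yields $D^*S(\bar x, \bar y)(0) = \{0\}$ through (i), and combined with the SNC of $\gph S$ from (ii) the Mordukhovich criterion returns the Lipschitz-like property of $S$. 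The main obstacle I anticipate is step (ii): checking carefully that the SNC preimage calculus applies in the general Asplund setting (rather than finite dimensions) and that the partial-SNC hypothesis the criterion really needs is covered by the SNC of $Q$ once $f$ is smooth.
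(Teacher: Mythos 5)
This theorem is imported verbatim from Mordukhovich's 2004 paper (Theorem 4.2 there); the present paper gives no proof of it, so there is no internal argument to compare yours against. Judged on its own terms, your sketch reconstructs the standard proof correctly and in the right order: the coderivative criterion for the Aubin property in Asplund spaces (Lipschitz-likeness of a locally closed $S$ at $(\bar x,\bar y)$ is equivalent to the partial SNC of $\gph S$ there together with triviality of the mixed coderivative kernel), combined with the equality case of Theorem \ref{Thm. 4.1-Mor-2004-JOGO}. Your observation that \eqref{regularity condition} collapses to \eqref{regularity condition-a} once $D^*f(\bar x,\bar y)(z^*)=\{\nabla f(\bar x,\bar y)^*z^*\}$ is right (the two conditions are in fact equivalent in this setting), and the graphical regularity of $S$ that Theorem \ref{Thm. 4.1-Mor-2004-JOGO} delivers does let you conflate the Fr\'echet, mixed, and normal coderivatives, so step (iii) is sound in both directions. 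The step you flag as the main obstacle, (ii), does go through: writing $\gph S=g^{-1}(\gph Q)$ locally with $g(x,y)=(x,y,-f(x,y))$, the SNC calculus for inverse images in Asplund spaces applies, and its qualification condition $N((\bar x,\bar y,\bar z);\gph Q)\cap\ker \nabla g(\bar x,\bar y)^*=\{0\}$ becomes, after unwinding the adjoint $\nabla g(\bar x,\bar y)^*(x^*,y^*,z^*)=(x^*,y^*)-\nabla f(\bar x,\bar y)^*z^*$, precisely condition \eqref{regularity condition-a}; since full SNC of $\gph S$ implies the partial SNC the criterion actually needs, nothing is missing from your outline.
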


Now, in order to study the budget map in \eqref{budget set}, we define a single-valued map $f:X^*\times X\rightarrow \R\times X$ and a constant set-valued map $Q:X^*\times X\rightrightarrows \R\times X$ by setting 
\begin{equation}\label{f_and_Q}
f(x^*, x)=(\langle x^*, x \rangle -1, -x ),\quad Q(x^*, x)=\R_+\times X_+
\end{equation}
for all $(x^*,x)\in X^*\times X$, where $\R_+:=\{t\in \R\ :\ t\ge 0\}$. Since \begin{equation*}\label{B} B(p)=\{x\in X \;:\; 0\in f(p, x)+Q(p, x) \}\quad (\forall p\in Y_+),\end{equation*} the budget map $B: Y_+ \rightrightarrows X_+$ is the restriction on $Y_+$ of the solution map $\widetilde B: X^* \rightrightarrows X$,
\begin{align}\label{B_widetilde} \widetilde B(x^*)=\{x\in X \;:\; 0\in f(x^*, x)+Q(x^*, x) \}\quad (\forall x^*\in X^*),
\end{align}
of the parametric generalized equation
$0\in f(x^*, x)+Q(x^*, x)$, where $x^* \in X^*$ is a parameter. 
\begin{lemma}\label{code. of single-valued map lemma}
The single-valued map $f$ is strictly differentiable on $X^*\times X$. In addition, for any $(\bar x^*, \bar x) \in X^*\times X$, the coderivative $D^*f(\bar x^*, \bar x):\R\times X^* \rightarrow (X^*\times X)^*$ is the adjoint operator of $\nabla f(\bar x^*, \bar x)$  mapping each $(\lambda, y^*) \in \R\times X^*$ to an element of $(X^*\times X)^*$ which is determined by
\begin{align}\label{code. of single-valued map formula}
\begin{split}
\big\langle D^*f(\bar x^*, \bar x)(\lambda, y^*), (u^*, u)\big\rangle& =
\big\langle \nabla f(\bar x^*, \bar x)^* (\lambda, y^*), (u^*, u) \big \rangle\\
& =\lambda \langle \bar x^*, u\rangle +\lambda \langle u^*,\bar x\rangle-\langle y^*, u \rangle
\end{split}
\end{align}
for all $(u^*, u)\in X^*\times X$. Moreover, if $\bar x$ is a nonzero vector, then the linear operator $D^*f(\bar x^*, \bar x)$ is injective.
\end{lemma}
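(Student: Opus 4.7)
The plan is to compute the Fr\'echet derivative of $f$ directly, promote it to strict differentiability, apply Proposition~\ref{Coderi. of diffe. functions} to identify $D^*f(\bar x^*, \bar x)$ with the adjoint of $\nabla f(\bar x^*, \bar x)$, and finally settle injectivity by a Hahn--Banach argument.

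The second component $-x$ of $f$ is linear, and the first component $\langle x^*, x\rangle - 1$ differs from a bounded bilinear form only by a constant, so the natural derivative candidate at $(\bar x^*, \bar x)$ is
$$\nabla f(\bar x^*, \bar x)(u^*, u) = \bigl(\langle \bar x^*, u\rangle + \langle u^*, \bar x\rangle,\; -u\bigr), \qquad (u^*, u) \in X^* \times X.$$
To verify strict differentiability at $(\bar x^*, \bar x)$, I would write nearby points as $(\bar x^* + a^*, \bar x + a)$ and $(\bar x^* + b^*, \bar x + b)$ with $a^*, b^*, a, b$ all tending to zero, and expand the defining difference $f(\bar x^* + b^*, \bar x + b) - f(\bar x^* + a^*, \bar x + a) - \nabla f(\bar x^*, \bar x)(b^* - a^*, b - a)$. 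The second coordinate vanishes identically, while the first coordinate reduces to the bilinear remainder $\langle b^*, b\rangle - \langle a^*, a\rangle = \langle b^* - a^*, b\rangle + \langle a^*, b - a\rangle$, which is bounded by $\|b^* - a^*\|\,\|b\| + \|a^*\|\,\|b - a\|$. Dividing by the product-space norm $\|b^* - a^*\| + \|b - a\|$ yields a quantity not exceeding $\max(\|b\|,\,\|a^*\|) \to 0$, which is the required estimate. Proposition~\ref{Coderi. of diffe. functions} then provides $D^*f(\bar x^*, \bar x)(\lambda, y^*) = \{\nabla f(\bar x^*, \bar x)^*(\lambda, y^*)\}$, and pairing this with an arbitrary test vector $(u^*, u)$ reproduces formula \eqref{code. of single-valued map formula} by direct substitution.

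For injectivity under the hypothesis $\bar x \neq 0$, I would assume $\nabla f(\bar x^*, \bar x)^*(\lambda, y^*) = 0$ and read \eqref{code. of single-valued map formula} with zero left-hand side. Setting $u = 0$ first yields $\lambda \langle u^*, \bar x\rangle = 0$ for every $u^* \in X^*$; since $\bar x \neq 0$, the Hahn--Banach theorem supplies some $u^*$ with $\langle u^*, \bar x\rangle \neq 0$, forcing $\lambda = 0$. Inserting $\lambda = 0$ and then setting $u^* = 0$ gives $\langle y^*, u\rangle = 0$ for every $u \in X$, whence $y^* = 0$. The only delicate step in the whole argument is the bookkeeping in the strict differentiability estimate; once the base-point splitting is chosen so that the cross terms cancel cleanly and leave only a manifestly $o(\|\cdot\|)$ remainder, everything else reduces to a direct unpacking of the definitions.
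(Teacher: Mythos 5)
Your proposal is correct, and its overall skeleton (derivative candidate, Proposition~\ref{Coderi. of diffe. functions}, explicit adjoint pairing) matches the paper's; but two sub-steps are handled by genuinely different means. For strict differentiability, you verify the two-base-point definition directly, splitting the bilinear remainder as $\langle b^*,b\rangle-\langle a^*,a\rangle=\langle b^*-a^*,b\rangle+\langle a^*,b-a\rangle$; the paper instead proves Fr\'echet differentiability at each point and then observes that $(\bar x^*,\bar x)\mapsto\nabla f(\bar x^*,\bar x)$ is continuous into $L(X^*\times X,\R\times X)$, invoking the standard fact that $C^1$ implies strict differentiability. Your route is more self-contained; the paper's is shorter once that standard fact is granted. (One cosmetic slip: your quotient is bounded by the \emph{sum} $\|b\|+\|a^*\|$, not by $\max(\|b\|,\|a^*\|)$, but this does not affect the limit.) For injectivity, you solve $\nabla f(\bar x^*,\bar x)^*(\lambda,y^*)=0$ directly from \eqref{code. of single-valued map formula}, testing with $u=0$ and then $u^*=0$ and using Hahn--Banach only to separate $\bar x$ from $0$; the paper instead proves that $\nabla f(\bar x^*,\bar x)$ is \emph{surjective} when $\bar x\neq 0$ and cites the duality theorem that the adjoint of a surjective operator is injective. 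Your argument is more elementary and avoids the external citation; the paper's yields the extra information that $\nabla f(\bar x^*,\bar x)$ is onto, which is not needed elsewhere. Both treatments are sound.
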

\begin{proof}
Fix any $(\bar x^*, \bar x) \in X^*\times X$. Consider the operator $T(\bar x^*, \bar x): X^*\times X \rightarrow \R\times X$ defined by
\begin{equation}\label{differ. operator}
T(\bar x^*,\bar x)(u^*, u)=\big( \langle \bar x^*, u \rangle+\langle u^*, \bar x \rangle, -u \big) \ \; ((u^*, u)\in X^*\times X).
\end{equation}
 Clearly, $T(\bar x^*, \bar x)$ is a linear operator. Putting $\gamma=\max\{\|\bar x^*\| +1,\| \bar x \|\}$, for every $(u^*, u)\in X^*\times X$, we have
\begin{align*}
\| T(\bar x^*,\bar x)(u^*, u) \| &=\big| \langle \bar x^*, u \rangle+\langle u^*, \bar x \rangle\big| + \|u\|\\
&\le (\|\bar x^*\| +1)\| u \|+\| u^* \|\| \bar x \|\\
&\le \gamma (\| u^* \|+\| u \|)=\gamma \| (u^*, u) \|.
\end{align*}
Thus, $T(\bar x^*,\bar x)$ is a bounded linear operator. Moreover, 
\begin{align*}\label{Frechet deri. calculate}\begin{array}{rcl}
 & &\displaystyle\lim_{(u^*, u)\rightarrow (\bar x^*, \bar x)}\dfrac{f(u^*, u)-f(\bar x^*,\bar x)- T(\bar x^*,\bar x)((u^*, u)-(\bar x^*,\bar x))}{\|(u^*, u)-(\bar x^*,\bar x)\|}\nonumber\\
& = & \displaystyle\lim_{(u^*, u)\rightarrow (\bar x^*, \bar x)}\Big[
\dfrac{\big ( \langle u^*, u \rangle-1, -u\big)-\big ( \langle \bar x^*,\bar x \rangle-1, -\bar x\big)}{\|(u^*-\bar x^*, u-\bar x)\|}\\
&  & \qquad \qquad  \qquad\quad -\dfrac{\langle \bar x^*, u-\bar x \rangle+ \langle u^*- \bar x^*, \bar x \rangle, -(u-\bar x)}{\|(u^*-\bar x^*, u-\bar x)\|}\Big]\nonumber\\
& = & \displaystyle\lim_{(u^*, u)\rightarrow (\bar x^*, \bar x)}\bigg (\dfrac{\langle u^*-\bar x^*, u-\bar x \rangle}{\|(u^*-\bar x^*, u-\bar x)\|},\ 0 \bigg).
\end{array}
\end{align*}
Since 
\begin{equation*}
\dfrac{|\langle u^*-\bar x^*, u-\bar x \rangle|}{\|(u^*-\bar x^*, u-\bar x)\|}\le \dfrac{\| u^*-\bar x^* \|\| u-\bar x \|}{\| u^*-\bar x^* \|+\|u-\bar x \|}\le \dfrac{\| u^*-\bar x^* \|\| u-\bar x \|}{\| u^*-\bar x^* \|}=\| u-\bar x \|
\end{equation*}
and $\| u-\bar x \|$ converges to $0$ when $u$ tends to $\bar x$, this implies that $f$ is Fr\'echet differentiable at $(\bar x^*, \bar x)$, and we have $\nabla f(\bar x^*, \bar x)=T(\bar x^*, \bar x)$. From \eqref{differ. operator} it follows that the operator $(\bar x^*,\bar x)\mapsto T(\bar x^*,\bar x)$ from $X^*\times X$ to the space of bounded linear operators  $L(X^*\times X, \R\times X)$ is continuous on $X^*\times X$. Hence, $f$ is strictly differentiable at $(\bar x^*, \bar x)$. Therefore, for each $(\lambda, y^*) \in \R\times X^*$, by Proposition \ref{Coderi. of diffe. functions} we have
\begin{equation*}
 \widehat D^*f(\bar x^*, \bar x)(\lambda, y^*)=D^*f(\bar x^*, \bar x)(\lambda, y^*)=\{\nabla f(\bar x^*, \bar x)^* (\lambda, y^*)\},
 \end{equation*}
 where $\nabla f(\bar x^*, \bar x)^*$ denotes the adjoint operator of $\nabla f(\bar x^*, \bar x)$. In addition, by \eqref{differ. operator} and the equality $\nabla f(\bar x^*, \bar x)=T(\bar x^*, \bar x)$ one has 
 \begin{align*}
\big \langle\nabla f(\bar x^*, \bar x)^* (\lambda, y^*), (u^*, u)\big\rangle&=\big \langle(\lambda, y^*), \nabla f(\bar x^*, \bar x) (u^*, u)\big\rangle\\
&=\big \langle(\lambda, y^*),\big( \langle \bar x^*, u \rangle+\langle u^*, \bar x \rangle, -u \big)\big\rangle\\
&=\lambda \langle \bar x^*, u\rangle +\lambda \langle u^*,\bar x\rangle-\langle y^*, u \rangle
 \end{align*}
 for every $(u^*, u)\in X^*\times X$. This establishes formula \eqref{code. of single-valued map formula}.
 
 If $\bar x\neq 0$, then $\nabla f(\bar x^*, \bar x):X^*\times X \rightarrow \R\times X$ is surjective. Indeed, let us show that for any $(\mu, v)\in \R\times X$ there exits $(u^*, u)\in X^*\times X$ with $\nabla f(\bar x^*, \bar x)(u^*, u)=(\mu, v)$. Since $\bar x \neq 0$, by the Hahn-Banach Theorem we can find $u_1^*\in X^*$ such that $\langle  u_1^*, \bar x\rangle=1$. Setting $u^*=(\mu+\langle \bar x^*, v\rangle) u_1^*$ and $u=-v$, we have
 \begin{align*}
 & \ \, u=-v,\ \;
 \langle u^*, \bar x \rangle=\mu+\langle \bar x^*, v \rangle
 \\
 \Leftrightarrow &  \ \,
 \left[ -u=v,\ \;
 \langle \bar x^*, u \rangle+\langle u^*, \bar x \rangle=\mu\right]
\\
 \Leftrightarrow &  \ \,  \big( \langle \bar x^*, u \rangle+\langle u^*, \bar x \rangle, -u \big)=(\mu, v)\\
  \Leftrightarrow &  \ \, T(\bar x^*,\bar x)(u^*, u)=(\mu, v).
 \end{align*}
Hence, $\nabla f(\bar x^*, \bar x)(u^*, u)=(\mu, v)$. Now, since $D^*f(\bar x^*, \bar x)=\nabla f(\bar x^*, \bar x)^*$ and $\nabla f(\bar x^*, \bar x)$ is surjective, we obtain the desired injectivity of $D^*f(\bar x^*, \bar x)$ from \cite[Theorem 4.15]{Rudin-1991} and complete the proof.
 \end{proof} 

 \begin{lemma}\label{code. of set-valued map lemma} The map
$Q:X^*\times X\rightrightarrows \R\times X$ given in \eqref{f_and_Q} is locally closed around and graphically regular at every point of $\gph Q=X^*\times X\times \R_+\times X_+$. For any $(\bar x^*, \bar x, \bar \mu,\bar y)\in\gph Q$, we have
\begin{align}\label{code. of set-valued map formula}
\begin{split}
\widehat D^*Q(\bar x^*, \bar x, \bar \mu,\bar y)(\lambda, y^*)&=D^*Q(\bar x^*, \bar x, \bar \mu,\bar y)(\lambda, y^*)\\
&=\begin{cases}
\{(0, 0)\} & \mbox{if } -\lambda \in N(\bar \mu; \R_+) \mbox{ and } -y^*\in N(\bar y; X_+)\\
\emptyset & \mbox{otherwise}
\end{cases}
\end{split}
\end{align}
with $(\lambda, y^*)\in \R\times X^*$. Moreover, $Q$ is SNC at $(\bar x^*, \bar x, \bar \mu,\bar y) \in \gph Q$ whenever $X_+$ is SNC at $\bar y$. Especially, if $\int X_+\neq \emptyset$, then $Q$ is SNC at every point of its graph.
\end{lemma}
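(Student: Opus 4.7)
The plan is to exploit the product structure $\gph Q=X^*\times X\times\R_+\times X_+$. Since each of the four factors is closed and convex, $\gph Q$ is a closed convex subset of $X^*\times X\times\R\times X$, so $Q$ is a closed convex set-valued map. Local closedness around every point of $\gph Q$ is then immediate, and Proposition~\ref{code. of convex map} delivers graphical regularity together with the equality $\widehat D^*Q=D^*Q$ at every graph point.

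For the explicit formula \eqref{code. of set-valued map formula}, I would compute the Mordukhovich normal cone to $\gph Q$ via the Cartesian product rule of \cite[Proposition~1.2]{Mordukhovich_2006a},
\[
N\bigl((\bar x^*,\bar x,\bar\mu,\bar y);\gph Q\bigr)
=N(\bar x^*;X^*)\times N(\bar x;X)\times N(\bar\mu;\R_+)\times N(\bar y;X_+).
\]
A direct inspection of \eqref{e-normal cone def.} shows that the $\varepsilon$-normal cone to a whole Banach space at any of its points is contained in the $\varepsilon$-ball of the dual, so passing to the limit gives $N(\bar x^*;X^*)=\{0\}$ and $N(\bar x;X)=\{0\}$. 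Substituting this decomposition into the coderivative definition \eqref{e-coderivative def.} forces the $(X^*\times X)$-components of every element of $D^*Q(\bar x^*,\bar x,\bar\mu,\bar y)(\lambda,y^*)$ to vanish and yields the necessary and sufficient conditions $-\lambda\in N(\bar\mu;\R_+)$ and $-y^*\in N(\bar y;X_+)$, reproducing \eqref{code. of set-valued map formula}.

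For the SNC claim, I would iterate the two-factor product rule for SNC recalled in the preliminaries. The spaces $X^*$ and $X$, viewed as subsets of themselves, are SNC at every of their points, because the same $\varepsilon$-ball characterization above forces $\|x_k^*\|\le\varepsilon_k\downarrow 0$ directly. The cone $\R_+$ sits in the finite-dimensional space $\R$ and is therefore SNC throughout. Hence SNC of $X_+$ at $\bar y$ is the only missing ingredient, and its presence propagates to SNC of the four-fold product at $(\bar x^*,\bar x,\bar\mu,\bar y)$. The final assertion then follows from the classical fact that a closed convex subset of a Banach space with nonempty interior is SNC at every of its points: if $\int X_+\neq\emptyset$, then $X_+$ is SNC throughout, whence $Q$ is SNC at every point of $\gph Q$.

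The only real obstacle is bookkeeping: verifying the trivial-normal-cone computation for full Banach spaces and iterating the two-factor product rules (for normal cones and for SNC) to four factors. No delicate qualification condition intervenes, since $Q$ does not depend on the parameters $(x^*,x)$---precisely what collapses the first two factors in the product decomposition.
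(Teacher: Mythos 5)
Your proposal is correct and follows essentially the same route as the paper's proof: closedness and convexity of the product $\gph Q=X^*\times X\times\R_+\times X_+$ give local closedness and, via Proposition~\ref{code. of convex map}, graphical regularity and $\widehat D^*Q=D^*Q$; the Cartesian product rule for normal cones with $N(\bar x^*;X^*)=N(\bar x;X)=\{0\}$ yields \eqref{code. of set-valued map formula}; and the SNC product rule combined with the SNC of the full spaces, of $\R_+$, and of $X_+$ (automatic when $\int X_+\neq\emptyset$) gives the last assertions. The only cosmetic difference is that you verify SNC of $X^*$, $X$, $\R_+$ directly from the $\varepsilon$-normal definition, whereas the paper cites the fact that convex sets with nonempty interior are SNC; both are valid.
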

\begin{proof} Since $X_+$ is closed and convex, 
$\gph Q=X^*\times X\times \R_+\times X_+
$
is a closed and convex subset of $X^*\times X\times \R\times X$. Take any $(\bar x^*, \bar x, \bar \mu,\bar y)\in \gph Q$. The closeness of $\gph Q$ implies that $Q$ is locally closed around $(\bar x^*, \bar x, \bar \mu,\bar y)$, while the the convexity of $\gph Q$ and Proposition \ref{code. of convex map} yields that $Q$ is graphically regular at this point, and
\begin{equation*}
\widehat D^*Q(\bar x^*, \bar x, \bar \mu,\bar y)=D^*Q(\bar x^*, \bar x, \bar \mu,\bar y).
\end{equation*}
Take any $(\lambda, y^*)\in \R\times X^*$. By the definition of limiting coderivative and properties of the normal cone to a Cartesian product (see \cite[Proposition 1.2]{Mordukhovich_2006a}), we have
\begin{align*}
&D^*Q(\bar x^*, \bar x, \bar \mu,\bar y)(\lambda, y^*)\\
&=\big \{(x, x^*) \,:\, (x, x^*, -\lambda, -y^*)\in N((\bar x^*, \bar x, \bar \mu,\bar y); \gph Q)\big\}\\
&=\big\{(x, x^*)\,:\, (x, x^*, -\lambda, -y^*)\in N(\bar x^*; X^*)\times N(\bar x; X)\times N(\bar \mu; \R_+)\times N(\bar y; X_+)\big\}\\
&=\begin{cases}
\{(0, 0)\} & \mbox{if } -\lambda \in N(\bar \mu; \R_+)\ \, \mbox{and}\ -y^*\in N(\bar y; X_+)\\
\emptyset & \mbox{otherwise}.
\end{cases}
\end{align*}
Thus, formula \eqref{code. of set-valued map formula} is valid.

Next, suppose that $(\bar x^*, \bar x, \bar \mu,\bar y) \in \gph Q$. By  \cite[Prop.~1.25 and Theorem~1.26]{Mordukhovich_2006a}, any convex set with nonempty interior is SNC at every point belonging to it. Hence,  $X^*$, $X$, $\R_+$ are SNC at $\bar x^*, \bar x, \bar \mu$, respectively. If, in addition, $X_+$ is SNC at $\bar y$ (which is automatically satisfied if $\int X_+\neq \emptyset$), then $\gph Q=X^*\times X\times \R_+\times X_+$ is SNC at $(\bar x^*, \bar x, \bar \mu,\bar y)$. So, $Q$ is SNC at $(\bar x^*, \bar x, \bar \mu,\bar y)$ whenever $X_+$ is SNC at $\bar y$.

The proof is complete.
\end{proof}
\begin{lemma} 
Let $\bar x^* \in X^*,\ \bar x \in \widetilde B(\bar x^*)$, where $\widetilde B$ is the map in \eqref{B_widetilde}, be such that $\bar x \neq 0$ and $X_+$ is SNC at $\bar x$. Then $\widetilde B: X^*\rightrightarrows X$ is graphically regular at $(\bar x^*, \bar x)$. Moreover, for every $x^*\in X^*$, 
\begin{align}\label{code. of solution  map formula}
\begin{split}
&\widehat D^*\widetilde B(\bar x^*, \bar x)(x^*)= D^*\widetilde B(\bar x^*, \bar x)(x^*)\\
&=\begin{cases}
\left\{\lambda \bar x \;:\; \lambda\geq 0,\; x^*+\lambda \bar x^*\in -N(\bar x; X_+)\right\}&\mbox{if }\langle \bar x^*,\bar x \rangle=1 \\
\{0\}&\mbox{if }\langle \bar x^*, \bar x \rangle<1,\; x^*\in -N(\bar x; X_+)\\
\emptyset &\mbox{if }\langle \bar x^*,\bar x \rangle<1,\; x^*\notin -N(\bar x; X_+).
\end{cases}
\end{split}
\end{align}
\end{lemma}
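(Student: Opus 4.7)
The plan is to apply Theorem~\ref{Thm. 4.1-Mor-2004-JOGO} to the representation \eqref{B_widetilde} of $\widetilde B$ as the solution map of the parametric generalized equation $0\in f(x^*,x)+Q(x^*,x)$, with $f,Q$ from \eqref{f_and_Q}, the parameter being $x^*\in X^*$ and the decision variable $x\in X$. Reflexivity of $X$ makes $X^*$, $X$ and $\R\times X$ Asplund and identifies $X^{**}$ with $X$, so coderivative values land in $X$ as required by \eqref{code. of solution  map formula}. At the point $\bar z:=-f(\bar x^*,\bar x)=(1-\langle\bar x^*,\bar x\rangle,\bar x)\in\gph Q$, Lemma~\ref{code. of single-valued map lemma} yields strict differentiability of $f$, and Lemma~\ref{code. of set-valued map lemma} yields local closedness, graphical regularity, and---under the standing hypothesis that $X_+$ is SNC at $\bar x$---the SNC property of $Q$.

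The qualification condition \eqref{regularity condition} is the heart of the argument. Fix $z^*=(\lambda,y^*)\in\R\times X^*$ and $(w,w^*)\in X\times X^*$ with
\[
(w,w^*)\in D^*f(\bar x^*,\bar x)(z^*)\cap\bigl(-D^*Q(\bar x^*,\bar x,\bar z)(z^*)\bigr).
\]
By \eqref{code. of set-valued map formula} the set $-D^*Q(\bar x^*,\bar x,\bar z)(z^*)$ is either empty or $\{(0,0)\}$, and in the latter case one also has $-\lambda\in N(1-\langle\bar x^*,\bar x\rangle;\R_+)$ and $-y^*\in N(\bar x;X_+)$. Thus $(w,w^*)=(0,0)$, and \eqref{code. of single-valued map formula} rewrites this as $(\lambda\bar x,\lambda\bar x^*-y^*)=(0,0)$; since $\bar x\neq 0$ this forces $\lambda=0$ and then $y^*=0$. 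Hence \eqref{regularity condition} holds. This is precisely where the hypothesis $\bar x\neq 0$ enters, via the injectivity of $D^*f(\bar x^*,\bar x)$ noted in Lemma~\ref{code. of single-valued map lemma}, and is the main obstacle of the proof.

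Theorem~\ref{Thm. 4.1-Mor-2004-JOGO} then delivers graphical regularity of $\widetilde B$ at $(\bar x^*,\bar x)$ (which yields $\widehat D^*\widetilde B(\bar x^*,\bar x)=D^*\widetilde B(\bar x^*,\bar x)$) together with the equality
\[
D^*\widetilde B(\bar x^*,\bar x)(x^*)=\bigl\{u\in X:\exists(\lambda,y^*)\text{ with }(u,-x^*)\in\nabla f(\bar x^*,\bar x)^*(\lambda,y^*)+D^*Q(\bar x^*,\bar x,\bar z)(\lambda,y^*)\bigr\}
\]
for every $x^*\in X^*$. By \eqref{code. of single-valued map formula} and \eqref{code. of set-valued map formula} this system translates into the four conditions $u=\lambda\bar x$, $y^*=x^*+\lambda\bar x^*$, $-\lambda\in N(\bar\mu;\R_+)$, and $-(x^*+\lambda\bar x^*)\in N(\bar x;X_+)$, where $\bar\mu:=1-\langle\bar x^*,\bar x\rangle\ge 0$ because $\bar x\in\widetilde B(\bar x^*)$. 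Splitting on $\bar\mu$ finishes the job: if $\bar\mu=0$ then $N(\bar\mu;\R_+)=(-\infty,0]$ permits any $\lambda\ge 0$ with $x^*+\lambda\bar x^*\in-N(\bar x;X_+)$, giving the first branch of \eqref{code. of solution  map formula}; if $\bar\mu>0$ then $N(\bar\mu;\R_+)=\{0\}$ forces $\lambda=0$ and $u=0$, and the residual constraint $x^*\in-N(\bar x;X_+)$ decides whether the value is $\{0\}$ or $\emptyset$, producing the other two branches. Everything after the qualification step is straightforward bookkeeping driven by the structure of $N(\cdot;\R_+)$.
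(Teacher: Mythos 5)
Your proof is correct and follows essentially the same route as the paper: apply Theorem~\ref{Thm. 4.1-Mor-2004-JOGO} to the generalized-equation representation \eqref{B_widetilde}, verify the qualification condition \eqref{regularity condition} via $D^*Q\subset\{(0,0)\}$ and the injectivity of $D^*f(\bar x^*,\bar x)$ (where $\bar x\neq 0$ is used), and then unwind the resulting formula by the case analysis on $N(1-\langle\bar x^*,\bar x\rangle;\R_+)$. The paper's Step 2 is exactly your ``bookkeeping'' paragraph, so there is nothing to add.
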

\begin{proof}
Suppose that $\bar x^* \in X^*$, $\bar x\in \widetilde B(\bar x^*)\setminus\{0\}$, and $X_+$ is SNC at $\bar x$. Since $X$ is reflexive, so are $X^*$, $X^*\times X$, and $\R\times X$. Hence, these spaces are Asplund and $(X^*)^*=X$, $(X^*\times X)^*= X\times X^*$, $\R\times X)^*=\R\times X^*$. The proof is divided into two steps. In the first step, we will apply Theorem \ref{Thm. 4.1-Mor-2004-JOGO} to show that, for every $x^*\in X^*$, 
\begin{align}\label{code. of solution  map step 1}
\begin{split}
D^*\widetilde B(\bar x^*, \bar x)(x^*)
=\big\{ x\in X \,:\, \, \exists \lambda&\in -N(1-\langle \bar x^*, \bar x \rangle; \R_+),\; \exists y^*\in -N(\bar x; X_+)\\& \quad \mbox{ such that }
\, (x, -x^*)= \nabla f(\bar x^*, \bar x)^*(\lambda, y^*) \big\}.
\end{split}
\end{align}
In the second step, we will prove that the set $A(x^*)$  in the right-hand side of \eqref{code. of solution  map step 1} can be computed by the formula
\begin{align}\label{code. of solution  map step 2}
A(x^*)=\begin{cases}
\left\{\lambda \bar x \;:\; \lambda\geq 0,\; x^*+\lambda \bar x^*\in -N(\bar x; X_+)\right\}&\mbox{if }\langle \bar x^*,\bar x \rangle=1 \\
\{0\}&\mbox{if }\langle \bar x^*, \bar x \rangle<1,\; x^*\in -N(\bar x; X_+)\\
\emptyset &\mbox{if }\langle \bar x^*,\bar x \rangle<1,\; x^*\notin -N(\bar x; X_+).
\end{cases}
\end{align}

{\sc Step 1.} From Lemma \ref{code. of single-valued map lemma}, $f$ is strictly differentiable at $(\bar x^*, \bar x)$ and its coderivative is given by \eqref{code. of single-valued map formula}.
Besides, since $\bar x\in \widetilde B(\bar x^*)$, $\bar x\in X_+$ and $1-\langle \bar x^*, \bar x \rangle \in \R_+$. Thus, $(\bar x^*, \bar x, 1-\langle \bar x^*, \bar x \rangle, \bar x )\in \gph Q$. Lemma \ref{code. of set-valued map lemma} implies that $Q$ is locally closed around $(\bar x^*, \bar x, 1-\langle \bar x^*, \bar x \rangle, \bar x )$, graphically regular and SNC at this point if $X_+$ is SNC at $\bar x$. So, if  condition \eqref{regularity condition} is satisfied, then $\widetilde B$ is graphically regular at $(\bar x^*, \bar x)$ and we can estimate the coderivatives of $\widetilde B$ at $(\bar x^*, \bar x)$ by formula \eqref{code. of a solution map formula} when it holds as equality. To check \eqref{regularity condition}, we fix any  $(x, x^*, \lambda, y^*)\in X\times X^* \times \R\times X^*$ satisfying the inclusion
\begin{equation}\label{regularity for the budget map}
(x, x^*)\in D^*f(\bar x^*, \bar x)(\lambda, y^*) \bigcap \Big(-D^*Q(\bar x^*, \bar x, 1-\langle \bar x^*, \bar x \rangle, \bar x )(\lambda, y^*)\Big).
\end{equation} 
By Lemma \ref{code. of set-valued map lemma}, one has 
$D^*Q(\bar x^*, \bar x, 1-\langle \bar x^*, \bar x \rangle, \bar x )(\lambda, y^*) \subset \{(0, 0)\}\subset X\times X^*.$
Combining this with \eqref{regularity for the budget map} and noting that $D^*f(\bar x^*, \bar x)(\lambda, y^*)$ is a singleton, we obtain $(x, x^*)=(0, 0)$ and $D^*f(\bar x^*, \bar x)(\lambda, y^*) =(0, 0)$. In addition, since $\bar x\neq 0$,  $D^*f(\bar x^*, \bar x)$ is injective by Lemma \ref{code. of single-valued map lemma}. So we have  $(\lambda, y^*)=(0, 0)\in \R\times X^*$.  Therefore, if $(x, x^*, \lambda, y^*)\in X\times X^* \times \R\times X^*$ satisfies \eqref{regularity for the budget map}, then $(x, x^*, \mu, y^*)=(0, 0, 0, 0)$. Thus, condition \eqref{regularity condition} is fulfilled.

By the second assertion of Theorem \ref{Thm. 4.1-Mor-2004-JOGO}, the set-valued map $\widetilde B: X^*\rightrightarrows X$ is graphically regular at $(\bar x^*, \bar x)$, and the coderivative $D^*\widetilde B(\bar x^*, \bar x): X^*\rightrightarrows X$ maps each $x^*\in X^*$ to the set
\begin{align*}
D^*\widetilde B(\bar x^*, \bar x)(x^*)=\big\{& x\in X \;:\; \exists (\lambda, y^*)\in \R\times X^*\mbox{ such that }\\
\ &(x, -x^*)\in D^*f(\bar x^*, \bar x)(\lambda, y^*)+ D^*Q(\bar x^*, \bar x, 1-\langle \bar x^*, \bar x \rangle, \bar x )(\lambda, y^*) \big\}.
\end{align*}
In addition, by \eqref{code. of set-valued map formula},
\begin{align*}
& D^*Q(\bar x^*, \bar x, 1-\langle \bar x^*, \bar x \rangle, \bar x )(\lambda, y^*)\\
&=\begin{cases}
\{(0, 0)\}  & \mbox{if } -\lambda \in N(1-\langle \bar x^*, \bar x \rangle; \R_+)\,\mbox{ and }\, -y^*\in N(\bar x; X_+)\\
\emptyset & \mbox{otherwise}.
\end{cases}
\end{align*}
Hence, we obtain formula \eqref{code. of solution  map step 1}.

{\sc Step 2.} For any $x^*\in X^*,\ x\in X,\ \lambda \in \R,\ y^*\in X^*$, the equality 
\begin{align}\label{abc}
 (x, -x^*)= \nabla f(\bar x^*, \bar x)^*(\lambda, y^*)
\end{align} holds if and only if
$$\langle (x, -x^*), (u^*, u)\rangle=\langle \nabla f(\bar x^*, \bar x)^*(\lambda, y^*), (u^*, u)\rangle\quad (\forall u^*\in X^*, \forall u \in X).$$
So,  by \eqref{code. of single-valued map formula}, \eqref{abc} means that
$$\langle (x-\lambda\bar x, y^*-x^*-\lambda\bar x^*), (u^*, u)\rangle=0 \quad (\forall u^*\in X^*, \forall u \in X).$$
Clearly, the latter is equivalent to the following system
\begin{align}\label{def}
 x=\lambda \bar x,\quad y^*=x^*+\lambda\bar x^*. 
\end{align}
Now, fix an $x^*\in X^*$. If $\langle \bar x^*, \bar x \rangle<1$, then $-N(1-\langle \bar x^*, \bar x \rangle; \R_+)=\{0\}$. Hence,  $x\in A(x^*)$ if and only if \eqref{abc} holds for $\lambda=0$ and for some $y^* \in-N(\bar x; X_+)$. From this property and \eqref{def} it follows that $A(x^*)=\{0\}$  when $x^* \in -N(\bar x; X_+)$ and $A(x^*)=\emptyset$ when $x^* \notin -N(\bar x; X_+)$. If $\langle \bar x^*, \bar x \rangle = 1$, then $-N(1-\langle \bar x^*, \bar x \rangle; \R_+)=\R_+$. Therefore,  $x\in A(x^*)$ if and only if \eqref{abc} holds for some $\lambda\geq 0$ and $y^* \in-N(\bar x; X_+)$.  So, $x\in A(x^*)$ if and only if $x$ fulfills \eqref{def} with $\lambda \in \R_+$ satisfying the condition $x^*+\lambda \bar x^*\in -N(\bar x; X_+)$. Formula \eqref{code. of solution  map step 2} has been obtained.

The proof is complete.
\end{proof}

In our preceding paper \cite{Huong_Yao_Yen_2016}, a H\"older-Lipschitz property of the demand map $D(\cdot)$ was obtained by using the Lipschitz-like property of the budget map $B(\cdot)$ at point $(\bar p, \bar x) \in$ $\gph B$ with $\bar p$ being an interior point of the cone of prices $Y_+$. Now, we will show that if $\bar x \neq 0$ and $X_+$ is SNC at $\bar x$, then we can get the Lipschitz-like property of $B(\cdot)$  without imposing the condition $\bar p \in \mbox{int }Y_+$. Hence, Theorem 4.4 in \cite{Huong_Yao_Yen_2016} can be extended to the case where $\bar p$ may belong to the boundary of $Y_+$.

\begin{theorem}
	Assume that $\bar x^* \in X^*,\, \bar x \in \widetilde B(\bar x^*)\setminus\{0\}$, and $X_+$ is SNC at $\bar x$. Then, the solution map $\widetilde B(\cdot)$ is Lipschitz-like at $(\bar x^*, \bar x)\in \gph \widetilde B$. Consequently, if $\bar p \in Y_+$, $\bar x\in B(\bar p)\setminus\{0\}$, and $X_+$ is SNC at $\bar x$, then the budget map $B(\cdot)$ is Lipschitz-like at $(\bar p, \bar x)$ in the sense that there exist a neighborhood $U$ of $\bar p$, a neighborhood $V$ of $\bar x$, and a constant $\ell > 0$ satisfying
	\begin{equation}\label{L-l_B}
	B(p)\cap V\subset B(p')+\ell\|p-p'\|\overline{B}_X \quad \forall p, p'\in U\cap Y_+.
	\end{equation}
\end{theorem}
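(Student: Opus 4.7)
My plan is to derive the Lipschitz-likeness of $\widetilde B$ at $(\bar x^*,\bar x)$ directly from Theorem~\ref{Thm. 4.2-Mor-2004-JOGO}, applied to the parametric generalized equation with data $(f,Q)$ defined in \eqref{f_and_Q}, and then obtain the statement for $B$ by a short restriction argument based on the identity $B(p)=\widetilde B(p)$ for $p\in Y_+$.

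The background hypotheses of Theorem~\ref{Thm. 4.2-Mor-2004-JOGO} are already in hand. Reflexivity of $X$ makes $X^*$ and $\R\times X$ Asplund and identifies $(X^*)^{**}$ with $X$. Lemma~\ref{code. of single-valued map lemma} supplies strict differentiability of $f$ at $(\bar x^*,\bar x)$, and Lemma~\ref{code. of set-valued map lemma} provides local closedness, graphical regularity, and---thanks to the SNC assumption on $X_+$ at $\bar x$---the SNC property of $Q$ at $(\bar x^*,\bar x,\bar z)$, where $\bar z:=-f(\bar x^*,\bar x)=(1-\langle\bar x^*,\bar x\rangle,\bar x)$. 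Since Lemma~\ref{code. of set-valued map lemma} further forces $D^*Q(\bar x^*,\bar x,\bar z)(\lambda,y^*)\subset\{(0,0)\}$ whenever it is nonempty, the qualification condition~\eqref{regularity condition-a} collapses to the injectivity of $\nabla f(\bar x^*,\bar x)^*$, which is guaranteed by the last assertion of Lemma~\ref{code. of single-valued map lemma} because $\bar x\ne 0$.

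The substantive step is verifying the necessary and sufficient Lipschitz-like condition~\eqref{Lipschitz-like condition}. Unfolding $(x,0)=\nabla f(\bar x^*,\bar x)^*(\lambda,y^*)$ by means of \eqref{code. of single-valued map formula} gives $x=\lambda\bar x$ and $y^*=\lambda\bar x^*$, while the $D^*Q$-admissibility forces $-\lambda\in N(1-\langle\bar x^*,\bar x\rangle;\R_+)$ and $-\lambda\bar x^*\in N(\bar x;X_+)$. If $\langle\bar x^*,\bar x\rangle<1$, then the first normal cone equals $\{0\}$, so $\lambda=0$ at once and we are done. The delicate case is $\langle\bar x^*,\bar x\rangle=1$, where only $\lambda\ge 0$ is automatic; here I would test the normal-cone inequality $-\lambda\bar x^*\in N(\bar x;X_+)$ against the admissible vector $0\in X_+$ (admissible because $X_+$ is a cone), obtaining $\lambda=\lambda\langle\bar x^*,\bar x\rangle\le 0$, which combined with $\lambda\ge 0$ yields $\lambda=0$, hence $y^*=0$ and $x=0$, as required.

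Theorem~\ref{Thm. 4.2-Mor-2004-JOGO} then produces neighborhoods $U$ of $\bar x^*$ and $V$ of $\bar x$ and a constant $\ell>0$ such that $\widetilde B(p)\cap V\subset\widetilde B(p')+\ell\|p-p'\|\bar B_X$ for all $p,p'\in U$. Specializing to the consequent (with $\bar x^*=\bar p$) and using $B=\widetilde B$ on $Y_+$, intersecting $U$ with $Y_+$ converts this inclusion immediately into~\eqref{L-l_B}. The main obstacle I expect is precisely the binding case $\langle\bar x^*,\bar x\rangle=1$ in \eqref{Lipschitz-like condition}: the sign freedom in $\lambda$ must be eliminated by an explicit admissible test vector in $X_+$; everything else is a mechanical combination of Lemmas~\ref{code. of single-valued map lemma}--\ref{code. of set-valued map lemma} with Theorem~\ref{Thm. 4.2-Mor-2004-JOGO}.
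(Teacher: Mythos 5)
Your proposal is correct and follows essentially the same route as the paper: both apply Theorem~\ref{Thm. 4.2-Mor-2004-JOGO} with the data \eqref{f_and_Q}, reduce condition \eqref{regularity condition-a} to the injectivity of $\nabla f(\bar x^*,\bar x)^*$ via $D^*Q\subset\{(0,0)\}$, and in the binding case $\langle\bar x^*,\bar x\rangle=1$ of \eqref{Lipschitz-like condition} kill $\lambda$ by testing $-\lambda\bar x^*\in N(\bar x;X_+)$ against $0\in X_+$. The only cosmetic difference is that the paper routes this last step through the already-established identity $D^*\widetilde B(\bar x^*,\bar x)(0)=\{0\}$, whereas you unfold the adjoint equation directly; the underlying computation is identical.
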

\begin{proof}
	Suppose that $\bar x^*\in X^*$, $\bar x \in \widetilde B(\bar x^*)\setminus\{0\}$, and $X_+$ is SNC at $\bar x$. We apply Theorem \ref{Thm. 4.2-Mor-2004-JOGO} with $f,\, Q$ being given by \eqref{f_and_Q}, $(\bar x^*, \bar x)$ and $\widetilde B(\cdot)$ playing the roles of $(\bar x, \bar y)$ and $S(\cdot)$, respectively. Thus, we are dealing with the generalized equation  $0\in f(x^*, x)+Q(x^*, x)$ appeared in \eqref{B_widetilde}. 
	
	Since $X$ is reflexive, so are $X^*$ and $\R\times X$. Hence,  $X^*$, $X$, and $\R\times X$ are Asplund spaces. Besides, it follows from Lemma \ref{code. of single-valued map lemma} that $f$ is strictly differentiable at $(\bar x^*, \bar x)$. Moreover, as $\bar x\in \widetilde B(\bar x^*)$, one has $\bar x\in X_+$ and $1-\langle \bar x^*, \bar x \rangle \in \R_+$. Thus, for $\bar z:=(1-\langle \bar x^*, \bar x \rangle, \bar x )$, one has $(\bar x^*, \bar x, \bar z)\in \gph Q=X^*\times X\times\R_+\times X_+$.  Since $X_+$ is SNC at $\bar x$, Lemma \ref{code. of set-valued map lemma} assures that $Q$ is locally closed around $(\bar x^*, \bar x, \bar z)$, graphically regular and SNC at this point. Therefore, it remains to check \eqref{regularity condition-a} and \eqref{Lipschitz-like condition}.
	
	 Condition \eqref{regularity condition-a} requires 
	\begin{equation*}
	\big[(0,0)\in \nabla f(\bar x^*, \bar x)^*(\lambda, y^*)+D^*Q(\bar x^*, \bar x, \bar z)(\lambda, y^*) \big]\Longrightarrow (\lambda, y^*)=(0, 0).
	\end{equation*}
	Fix any $(\lambda, y^*)\in\R\times X^*$ with the property
	\begin{equation}\label{regularity condition-a_left side}
(0,0)\in \nabla f(\bar x^*, \bar x)^*(\lambda, y^*)+D^*Q(\bar x^*, \bar x, \bar z)(\lambda, y^*).
	\end{equation}
	 By \eqref{code. of set-valued map formula}, the inclusion $D^*Q(\bar x^*, \bar x, \bar z)(\lambda, y^*) \subset \{(0,0)\}$ is valid. Hence,   \eqref{regularity condition-a_left side} implies that $\nabla f(\bar x^*, \bar x)^*(\lambda, y^*)=(0,0)$. On one hand, combining this with  \eqref{code. of single-valued map formula}, we get $D^*f(\bar x^*, \bar x)(\lambda, y^*)=\nabla f(\bar x^*, \bar x)^*(\lambda, y^*)=(0,0)$. On the other hand, since $\bar x \neq 0$, $D^*f(\bar x^*, \bar x)$ is injective by Lemma \ref{code. of single-valued map lemma}. It follows that $(\lambda, y^*)=(0,0)$. Thus, condition \eqref{regularity condition-a} is satisfied. 
	
	Condition \eqref{Lipschitz-like condition} is the following
	\begin{equation*}
	\big [(x, 0)\in \nabla f(\bar x^*, \bar x)^*(\lambda, y^*)+D^*Q(\bar x^*, \bar x, \bar z)(\lambda, y^*) \big]\; \Longrightarrow\; (x, \lambda, y^*)=(0, 0, 0).
	\end{equation*}
	Let $(x, \lambda, y^*)\times\R\times X^*$ be such that
	\begin{equation*}\label{regularity condition-b_left side}
	(x, 0)\in \nabla f(\bar x^*, \bar x)^*(\lambda, y^*)+D^*Q(\bar x^*, \bar x, \bar z)(\lambda, y^*).
	\end{equation*}
In combination with \eqref{code. of set-valued map formula}, this yields $\lambda \in  -N(1-\langle \bar x^*, \bar x \rangle; \R_+)$, $y^*\in -N(\bar x; X_+)$, and $(x, 0)= \nabla f(\bar x^*, \bar x)^*(\lambda, y^*)$. Hence, using formula \eqref{code. of solution  map step 1} for $x^*=0$, we obtain $x\in D^*\widetilde B(\bar x^*, \bar x)(0)$. Let us show that 
	\begin{equation}\label{qualification-1}
	D^*\widetilde B(\bar x^*, \bar x)(0)=\{0\}.
	\end{equation}
	Since $X_+$ is a convex cone, $N(\bar x; X_+)$ is the normal cone in the sense of convex analysis. Clearly, $0\in -N(\bar x; X_+)$. As $\bar x\in \widetilde{B}( \bar x^*)$, one has $\langle \bar x^*, \bar x \rangle \le 1$. In case $\langle \bar x^*, \bar x \rangle<1$, by formula \eqref{code. of solution  map formula}, one gets \eqref{qualification-1}. In case $\langle \bar x^*, \bar x \rangle=1$, \eqref{code. of solution  map formula} implies 
	\begin{equation}\label{coderivative of tilde B at 0}
		D^*\widetilde B(\bar x^*, \bar x)(0)=\{t\bar x \,:\, t\geq 0,\; t\bar x^* \in -N(\bar x; X_+)\}.
	\end{equation}
By \eqref{coderivative of tilde B at 0}, if $D^*\widetilde B(\bar x^*, \bar x)(0)$ contains a nonzero vector, then the latter must have the form $t\bar x $ with $t> 0$ and $-t\bar x^* \in N(\bar x; X_+)$. For $\tilde{x}:=0 \in X_+$, we have $$\langle -t\bar x^*, \tilde{x}-\bar x\rangle=t\langle \bar x^*, \bar x \rangle=t >0,$$ which contradicts the inclusion $-t\bar x^* \in N(\bar x; X_+)$.  So, \eqref{qualification-1} is valid. Since $x$ belongs to $D^*\widetilde B(\bar x^*, \bar x)(0)$, from \eqref{qualification-1} one has $x=0$. So, as $\nabla f(\bar x^*, \bar x)^*(\lambda, y^*)=(x, 0)$, we get $\nabla f(\bar x^*, \bar x)^*(\lambda, y^*)=(0, 0)$. Hence, by~\eqref{code. of single-valued map formula}, $$D^*f(\bar x^*, \bar x)(\lambda, y^*)=\nabla f(\bar x^*, \bar x)^*(\lambda, y^*)=(0,0).$$ Since $D^*f(\bar x^*, \bar x)$ is injective by Lemma~\ref{code. of single-valued map lemma}, this yields  $(\lambda,y^*)=(0,0)$. We have shown that $(x, \lambda, y^*)=(0, 0, 0)$; thus  \eqref{Lipschitz-like condition} is fulfilled.
 
The above analysis allows us to invoke Theorem \ref{Thm. 4.2-Mor-2004-JOGO} to assert that $\widetilde B: X^*\rightrightarrows X$ is Lipschitz-like at $(\bar x^*, \bar x)$. Now, take any $\bar p \in Y_+$ and $\bar x\in B(\bar p)\setminus\{0\}$. Since $\bar x \in B(\bar p)=\widetilde B(\bar p)$, one has $\bar x \in \widetilde B(\bar p)$.  Thus, if $X_+$ is SNC at $\bar x$, then $\widetilde B$ is Lipschitz-like at $(\bar p, \bar x)$. Hence, there exist a neighborhood $U$ of $\bar p$, a neighborhood $V$ of $\bar x$, and a constant $\ell > 0$ satisfying
	\begin{equation}\label{L-l_widetileB}
	\widetilde B(p)\cap V\subset \widetilde B(p')+\ell\|p-p'\|\overline{B}_X \quad \forall p, p'\in U.
	\end{equation}
	Remembering that $\widetilde B(p)=B(p)$ for all $p\in U\cap Y_+$, we obtain \eqref{L-l_B} from \eqref{L-l_widetileB} and complete the proof.
\end{proof}

Under some mild conditions, we can have exact formulas for both Fr\'echet and limiting coderivatives of the budget map.

\begin{theorem}\label{code. of budget map}
Suppose that $\bar p \in \int Y_+,\; \bar x \in B(\bar p)\setminus\{0\}$, and $X_+$ is SNC at $\bar x$. Then the budget map $B: Y_+\rightrightarrows X_+$ is graphically regular at $(\bar p, \bar x)$. Moreover, for every $x^*\in X^*$, one has
\begin{align}\label{code. of budget map formula}
\begin{split}
& \widehat D^*B(\bar p, \bar x)(x^*)=D^*B(\bar p, \bar x)(x^*)\\
&=\begin{cases}
\left\{\lambda \bar x \;:\; \lambda\geq 0,\; x^*+\lambda \bar p\in -N(\bar x; X_+)\right\}&\mbox{if }\langle \bar p, \bar x \rangle=1 \\
\{0\}&\mbox{if }\langle \bar p, \bar x \rangle<1,\, x^*\in -N(\bar x; X_+)\\
\emptyset &\mbox{if }\langle \bar p, \bar x \rangle<1,\, x^*\notin -N(\bar x; X_+).
\end{cases}
\end{split}
\end{align}
\end{theorem}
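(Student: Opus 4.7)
The plan is to reduce this theorem to the preceding lemma on the coderivative of the extended solution map $\widetilde B(\cdot)$ by exploiting the local agreement $B = \widetilde B$ near $\bar p$, which is guaranteed by the assumption $\bar p \in \int Y_+$. Indeed, unwinding the definition of $\widetilde B$ in \eqref{B_widetilde} gives $\widetilde B(x^*) = \{x \in X_+ : \langle x^*, x\rangle \le 1\}$ for every $x^* \in X^*$, so the only difference between $B$ and $\widetilde B$ is that the former is set to $\emptyset$ outside $Y_+$. Because $\bar p \in \int Y_+$, I can pick an open neighborhood $U$ of $\bar p$ with $U \subset Y_+$, and then $B(p) = \widetilde B(p)$ for every $p \in U$. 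Consequently, the graphs of $B$ and $\widetilde B$ coincide on the open set $U \times X$, which contains $(\bar p, \bar x)$.

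The key observation is that both the Fr\'echet normal cone $\widehat N(\cdot; \cdot)$ and the Mordukhovich normal cone $N(\cdot; \cdot)$ are purely local constructions: they only depend on the intersection of the set with an arbitrarily small neighborhood of the base point (see the definition \eqref{e-normal cone def.} and the Painlev\'e--Kuratowski limit in the definition of $N$). Therefore the local equality of graphs yields
\[
\widehat N\big((\bar p, \bar x); \gph B\big) = \widehat N\big((\bar p, \bar x); \gph \widetilde B\big), \qquad N\big((\bar p, \bar x); \gph B\big) = N\big((\bar p, \bar x); \gph \widetilde B\big),
\]
and hence, via the defining relations for the coderivatives,
\[
\widehat D^* B(\bar p, \bar x)(x^*) = \widehat D^* \widetilde B(\bar p, \bar x)(x^*), \qquad D^* B(\bar p, \bar x)(x^*) = D^* \widetilde B(\bar p, \bar x)(x^*)
\]
for every $x^* \in X^*$. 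In particular, graphical regularity transfers from $\widetilde B$ to $B$ at $(\bar p, \bar x)$.

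It then suffices to apply the previous lemma with $\bar x^* := \bar p$, since the hypotheses $\bar x \in \widetilde B(\bar p) \setminus \{0\}$ (which follows from $\bar x \in B(\bar p) \setminus \{0\}$ together with $B(\bar p) = \widetilde B(\bar p)$) and the SNC property of $X_+$ at $\bar x$ are exactly those required by that lemma. Substituting $\bar p$ for $\bar x^*$ in \eqref{code. of solution  map formula} produces precisely \eqref{code. of budget map formula}, and this finishes the argument. The only real issue worth being careful about is the verification that restricting from $\widetilde B$ to its restriction $B$ does not alter normal cones at $(\bar p, \bar x)$; this is where the assumption $\bar p \in \int Y_+$ enters crucially, as it ensures that $B$ and $\widetilde B$ have identical graphs on a full neighborhood of $(\bar p, \bar x)$ rather than merely on a slice intersected with $Y_+ \times X$.
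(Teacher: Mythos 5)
Your proposal is correct and follows essentially the same route as the paper: use $\bar p \in \int Y_+$ to get an open $U \subset Y_+$ on which $B$ and $\widetilde B$ coincide, deduce equality of the graphs on $U \times X$, invoke the locality of the Fr\'echet and limiting normal cones to transfer the coderivatives and graphical regularity from $\widetilde B$ to $B$, and then apply the preceding lemma with $\bar x^* := \bar p$. No gaps.
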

\begin{proof} Since  $\bar p \in \int Y_+$, there exists an open set $U$ in the norm topology of $X^*$ such that $\bar p\in U\subset Y_+$. Then $B(p)= \widetilde B(p)$ for all $p\in U$. It follows that 
\begin{equation}\label{local_coincidence}
(\gph B)\cap (U\times X) =(\gph \widetilde{B})\cap (U\times X).
\end{equation}
By the definitions of Fr\'echet and limiting normal cones, the cones $\widehat N((\bar p, \bar x); \gph B)$ and $N((\bar p, \bar x); \gph B)$ (resp., $\widehat N((\bar p, \bar x); \gph \widetilde B)$ and 
$N((\bar p, \bar x);\gph \widetilde B)$) are defined just by vectors of $\gph B$ (resp., of $\gph \widetilde B$) in a neighborhood $W$ of $(\bar p, \bar x)$. Choosing $W=U\times X$, from \eqref{local_coincidence} we can deduce that 
$\widehat N((\bar p, \bar x); \gph B)=\widehat N((\bar p, \bar x); \gph \widetilde B)$ and $N((\bar p, \bar x); \gph B)=N((\bar p, \bar x); \gph \widetilde B)$. Consequently, for any $x^*\in X^*$,
$$
\widehat D^* B(\bar p, \bar x)(x^*)=\widehat D^*\widetilde B(\bar p, \bar x)(x^*)
$$
and 
\begin{equation*}\label{eee}
D^* B(\bar p, \bar x)(x^*)=D^*\widetilde B(\bar p, \bar x)(x^*).
\end{equation*}
Hence, letting $\bar p$ play the role of $\bar x^*$ in \eqref{code. of solution  map formula}, we obtain formula \eqref{code. of budget map formula} from the latter.

The proof is complete.
\end{proof}

If $\langle \bar p, \bar x \rangle=1$ then, for any $x^* \in X^*$, using \eqref{code. of budget map formula} one can compute the coderivative values $\hat D^*B(\bar p, \bar x)(x^*)$ and $D^*B(\bar p, \bar x)(x^*)$ via the set $\{\lambda\geq 0 \;:\; x^*+\lambda \bar p\in -N(\bar x; X_+)\}$  of real numbers. 
The forthcoming lemma, which will be used intensively in Section~4, describes explicitly the latter set in a situation where $\bar x\in D(\bar p)$ and $x^*=-\nabla u(\bar x)$.

\begin{lemma}\label{neccessary condition lemma}
Suppose that $(\bar p, \bar x)\in \gph D$ and $u$ is Fr\'echet differentiable at $\bar x$. Then 
\begin{equation}\label{neccessary condition}
\{\lambda\geq 0 \;:\; \lambda \bar p\in \nabla u(\bar x) -N(\bar x; X_+)\}=\{\langle \nabla u(\bar x), \bar x\rangle\}
\end{equation}
when $\langle \bar p, \bar x \rangle=1$, and $\nabla u(\bar x) \in N(\bar x; X_+)$ when $\langle \bar p, \bar x \rangle<1$.
\end{lemma}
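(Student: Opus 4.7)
The plan is to exploit the optimality of $\bar x$: since $(\bar p,\bar x)\in\gph D$, we have $\bar x\in B(\bar p)$ and $u(\bar x)=v(\bar p)=\sup\{u(x)\,:\,x\in B(\bar p)\}$, so $\bar x$ maximizes $u$ over the budget set. Combining this with Fr\'echet differentiability of $u$ at $\bar x$, I will extract first-order conditions by testing against suitable admissible curves, using that $X_+$ is a convex cone (hence closed under addition and nonnegative scaling) and that, since $X_+$ is convex, $N(\bar x;X_+)$ coincides with the convex-analytic normal cone $\{x^*\in X^*\,:\,\langle x^*,x-\bar x\rangle\le 0\ \forall x\in X_+\}$.

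\textbf{Case $\langle\bar p,\bar x\rangle<1$.} For any $x\in X_+$, put $x_t:=\bar x+t(x-\bar x)$. By convexity of $X_+$, $x_t\in X_+$ for $t\in[0,1]$; and since $\langle\bar p,x_t\rangle$ is continuous in $t$ with value $\langle\bar p,\bar x\rangle<1$ at $t=0$, we have $x_t\in B(\bar p)$ for all sufficiently small $t>0$. Optimality yields $u(x_t)\le u(\bar x)$, and Fr\'echet differentiability then yields $\langle\nabla u(\bar x),x-\bar x\rangle\le 0$. Since this holds for every $x\in X_+$, I conclude $\nabla u(\bar x)\in N(\bar x;X_+)$.

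\textbf{Case $\langle\bar p,\bar x\rangle=1$.} Set $\lambda_0:=\langle\nabla u(\bar x),\bar x\rangle$. For the inclusion ``$\supseteq$'', I first note that $x_t:=(1-t)\bar x\in X_+\cap B(\bar p)$ for $t\in[0,1]$, so optimality plus differentiability gives $-t\lambda_0+o(t)\le 0$, hence $\lambda_0\ge 0$. Next I verify $\nabla u(\bar x)-\lambda_0\bar p\in N(\bar x;X_+)$, i.e., $\langle\nabla u(\bar x)-\lambda_0\bar p,x-\bar x\rangle\le 0$ for every $x\in X_+$, splitting according to the sign of $\langle\bar p,x\rangle\ge 0$ (recall $\bar p\in Y_+$). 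If $\langle\bar p,x\rangle>0$, the scaled vector $\tilde x:=x/\langle\bar p,x\rangle$ lies in $X_+$ (cone property) with $\langle\bar p,\tilde x\rangle=1$, so $\tilde x\in B(\bar p)$; the convex curve $(1-t)\bar x+t\tilde x$ remains in $B(\bar p)$, and differentiability gives $\langle\nabla u(\bar x),\tilde x-\bar x\rangle\le 0$, which after multiplying by $\langle\bar p,x\rangle$ and using $\lambda_0=\langle\nabla u(\bar x),\bar x\rangle$ and $\langle\bar p,\bar x\rangle=1$ rearranges into the claimed inequality. If $\langle\bar p,x\rangle=0$, the curve $x_t:=\bar x+tx$ lies in $X_+$ (convex cones are closed under addition) with $\langle\bar p,x_t\rangle=1$, so $x_t\in B(\bar p)$ and differentiability yields $\langle\nabla u(\bar x),x\rangle\le 0$, which coincides with the desired inequality once the $\bar p$-terms telescope.

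For the reverse inclusion ``$\subseteq$'', any $\lambda\ge 0$ with $\nabla u(\bar x)-\lambda\bar p\in N(\bar x;X_+)$ must satisfy $\langle\nabla u(\bar x)-\lambda\bar p,x-\bar x\rangle\le 0$ for every $x\in X_+$; plugging in $x=0\in X_+$ and $x=2\bar x\in X_+$ (both admissible because $X_+$ is a cone) yields two opposite inequalities that force $\lambda=\lambda_0$. The step that I expect to require the most care is the ``$\supseteq$'' containment in the binding case, where the normal-cone inequality must be verified for \emph{every} $x\in X_+$ and not only for $x\in B(\bar p)$; the scaling trick $\tilde x=x/\langle\bar p,x\rangle$ together with the separate treatment of the boundary subcase $\langle\bar p,x\rangle=0$ is what allows us to avoid invoking a Lagrange-multiplier theorem under a constraint qualification.
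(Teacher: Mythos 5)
Your proof is correct, and it diverges from the paper's in one substantive way. Both arguments share the same skeleton: the first-order condition $\langle\nabla u(\bar x),x-\bar x\rangle\le 0$ for $x\in B(\bar p)$ obtained from optimality plus Fr\'echet differentiability, and the uniqueness step via the test points $0$ and $2\bar x$ forcing $\langle z^*,\bar x\rangle=0$. The difference is in how the existence of the multiplier is secured in the binding case $\langle\bar p,\bar x\rangle=1$. The paper first shows $\nabla u(\bar x)\in N(\bar x;B(\bar p))$ and then invokes the convex-analytic intersection rule $N(\bar x;B(\bar p))=N(\bar x;X_+)+N(\bar x;\Omega)$ (valid because $0\in X_+\cap\int\Omega$, citing Ioffe--Tihomirov), together with $N(\bar x;\Omega)=\R_+\bar p$, to conclude that \emph{some} $\lambda\ge 0$ works; the identification $\lambda=\langle\nabla u(\bar x),\bar x\rangle$ then comes only from the uniqueness half. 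You instead guess the multiplier $\lambda_0=\langle\nabla u(\bar x),\bar x\rangle$ outright and verify $\nabla u(\bar x)-\lambda_0\bar p\in N(\bar x;X_+)$ by hand, using the homogeneity of $X_+$ to rescale an arbitrary $x\in X_+$ with $\langle\bar p,x\rangle>0$ onto the budget hyperplane and treating the face $\langle\bar p,x\rangle=0$ by additivity of the cone; your algebra in both subcases checks out, as does the preliminary step $\lambda_0\ge 0$ via the ray $(1-t)\bar x$. What your route buys is self-containedness: no normal-cone sum rule or interiority condition is needed, only the cone structure of $X_+$. What the paper's route buys is brevity and robustness --- the intersection rule dispenses with the case analysis and would survive replacing $X_+$ by a convex set that is not a cone, where your scaling trick is unavailable. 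Both are valid proofs of the lemma as stated.
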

\begin{proof} Let $\bar p, \bar x, u$ satisfy our assumptions. 

Suppose that $\langle \bar p, \bar x \rangle=1$. We first establish the inclusion ``$\subset$'' in \eqref{neccessary condition}, and then show that the set on the left-hand side, which will be denoted by $\Lambda$, is nonempty. 

Given any $\lambda\in\Lambda$, we have $\lambda \bar p = \nabla u(\bar x)-z^*$ for some $z^*\in N(\bar x, X_+)$. Combining the last equality with the condition $\langle \bar p, \bar x\rangle =1$ gives $\lambda= \langle \nabla u(\bar x), \bar x\rangle - \langle z^*, \bar x\rangle$. In addition, since $z^*\in N(\bar x, X_+)$ and $X_+$ is convex, $\langle z^*,x - \bar x\rangle \leq 0$ for all $x \in X_+$. Besides, as $X_+$ is a nonempty closed cone, $x_1:=0$ and $x_2:=2\bar x$ belong to $X_+$. Therefore, $-\langle x^*,\bar x\rangle = \langle z^*,x_1 - \bar x\rangle \leq 0$ and $\langle z^*,\bar x\rangle = \langle z^*,x_2 - \bar x\rangle \leq 0$. Hence, we must have $\langle z^*, \bar x\rangle =0$. Thus, $\lambda= \langle \nabla u(\bar x), \bar x\rangle$ and we obtain the inclusion ``$\subset$'' in \eqref{neccessary condition}.

Next, fix an arbitrary $x \in B(\bar p)$. As $B(\bar p)$ is convex, $x_t:=tx+(1-t)\bar x$ belongs to $B(\bar p)$ for all $t\in (0, 1)$. Since $\bar x \in D(\bar p)$, $u(\bar x)\geq u(x_t)$ for all $t\in (0, 1)$. Thus, $[u(\bar x +t(x-\bar x))-u(\bar x)]/t \leq 0$ for all $t\in (0, 1)$. Letting $t\to 0^+$ and using the Fr\'echet differentiability of $u$ at $\bar x$, we obtain $\langle \nabla u(\bar x),x-\bar x\rangle \leq 0$. Since the latter holds for any $x\in B(\bar p)$, by the convexity of $B(\bar p)$ we have $\nabla u(\bar x)\in N(\bar x, B(\bar p))$. Now, as $B(\bar p)=X_+\cap \Omega$ with $\Omega:=\{x\in X \;:\; \langle \bar p, x \rangle\leq 1 \}$ and $0 \in X_+\cap \int \Omega$, applying the fundamental intersection rule of convex analysis \cite[Proposition~1, p.~205]{Ioffe_Tihomirov_1979}, we have
\begin{equation}\label{intersection}
N(\bar x, B(\bar p))=N(\bar x, X_+)+N(\bar x, \Omega).
\end{equation}
Clearly, $N(\bar x, \Omega)=\R_+\bar p$ when $\langle \bar p, \bar x \rangle = 1$. So, the inclusion $\nabla u(\bar x)\in N(\bar x, B(\bar p))$ and \eqref{intersection} imply the existence of $\lambda \geq 0$ satisfying $\nabla u(\bar x)\in N(\bar x, X_+)+\lambda \bar p$. This shows that $\Lambda\neq\emptyset$.

 If $\langle \bar p, \bar x \rangle<1$, then $N(\bar x, \Omega)=\{0\}$. Thus, the inclusion $\nabla u(\bar x)\in N(\bar x, B(\bar p))$ and~\eqref{intersection} yield $\nabla u(\bar x) \in N(\bar x; X_+)$.
 
The proof is complete. \end{proof}

\section{Subgradients of the function $-v$}
Following \cite{Mordukhovich_Nam_Yen_2009}, we consider the \textit{parametric optimization problem}
\begin{equation}\label{MNY_problem}
\min\{\varphi(x, y) \;:\;  y\in G(x)\},
\end{equation}
where $\varphi:X\times Y \rightarrow \overline{\R}$ is a \textit{cost} function, $G:X\rightrightarrows Y$ is a \textit{constraint} set-valued map between Banach spaces. The \textit{marginal function} $\mu (\cdot):X\rightarrow \overline{\R}$ and the \textit{solution map} $M(\cdot):X\rightrightarrows Y$ of this problem are defined, respectively, by
\begin{equation*}
\mu(x):=\inf\{\varphi(x, y) \;:\; y\in G(x)\},
\end{equation*}
and
\begin{equation*}
M(x):=\left\{y\in G(x) \;:\; \mu(x)=\varphi(x, y) \right\}.
\end{equation*}

\subsection{Fr\'echet subgradients}

The following theorem gives an upper estimate for the Fr\'echet subdifferential of the general marginal function $\mu(\cdot)$ at a given point $\bar x$ via the Fr\'echet coderivative of the constraint mapping $G$ and the Fr\'echet upper subdifferential of the value function $\varphi$.

\begin{theorem}\label{Frechet upper subdif. MNY09}{\rm(See \cite[Theorem 1]{Mordukhovich_Nam_Yen_2009})}
Let $\bar x$ be such that $M(\bar x)\neq \emptyset$ and $|\mu(\bar x)| \neq \infty $, and let $\bar y \in M(\bar x)$ be such that $\widehat \partial^+\varphi(\bar x, \bar y)\neq \emptyset$. Then 
\begin{equation}\label{inclusion}
\widehat \partial\mu(\bar x)\subset\bigcap_{(x^*, y^*)\in \widehat \partial^+\varphi(\bar x, \bar y)}\big[x^*+\widehat D^*G(\bar x, \bar y)(y^*)\big].
\end{equation}
\end{theorem}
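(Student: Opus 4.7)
The plan is to unpack the definitions of $\widehat\partial\mu(\bar x)$, $\widehat\partial^+\varphi(\bar x,\bar y)$, and $\widehat D^*G(\bar x,\bar y)$, and then combine them with the obvious inequality $\mu(x)\leq\varphi(x,y)$ valid for every $(x,y)\in\gph G$ together with the equality $\mu(\bar x)=\varphi(\bar x,\bar y)$, which holds because $\bar y\in M(\bar x)$. Fix $x_0^*\in\widehat\partial\mu(\bar x)$ and $(x^*,y^*)\in\widehat\partial^+\varphi(\bar x,\bar y)$. The goal is to show $x_0^*-x^*\in\widehat D^*G(\bar x,\bar y)(y^*)$, which by the definition of the Fr\'echet coderivative is equivalent to
$$\limsup_{(x,y)\overset{\gph G}{\longrightarrow}(\bar x,\bar y)}\frac{\langle x_0^*-x^*,\,x-\bar x\rangle-\langle y^*,\,y-\bar y\rangle}{\|x-\bar x\|+\|y-\bar y\|}\leq 0.$$

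The algebraic heart of the argument is the telescoping decomposition
\begin{align*}
&\langle x_0^*-x^*,\,x-\bar x\rangle-\langle y^*,\,y-\bar y\rangle\\
&=\bigl[\langle x_0^*,x-\bar x\rangle-(\mu(x)-\mu(\bar x))\bigr]
+\bigl[(\mu(x)-\varphi(x,y))-(\mu(\bar x)-\varphi(\bar x,\bar y))\bigr]\\
&\quad+\bigl[(\varphi(x,y)-\varphi(\bar x,\bar y))-\langle x^*,x-\bar x\rangle-\langle y^*,y-\bar y\rangle\bigr].
\end{align*}
Given $\varepsilon>0$, the inclusion $x_0^*\in\widehat\partial\mu(\bar x)$ bounds the first bracket by $\varepsilon\|x-\bar x\|$ on some neighborhood of $\bar x$. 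The identification $\widehat\partial^+\varphi(\bar x,\bar y)=-\widehat\partial(-\varphi)(\bar x,\bar y)$ together with the definition of the Fr\'echet subdifferential of $-\varphi$ bounds the third bracket by $\varepsilon(\|x-\bar x\|+\|y-\bar y\|)$ on some neighborhood of $(\bar x,\bar y)$. Finally, the equality $\mu(\bar x)=\varphi(\bar x,\bar y)$ reduces the middle bracket to $\mu(x)-\varphi(x,y)$, which is $\leq 0$ for every $(x,y)\in\gph G$.

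Dividing the sum of the three estimates by $\|x-\bar x\|+\|y-\bar y\|$ gives a quotient bounded above by $2\varepsilon$ on a punctured neighborhood of $(\bar x,\bar y)$ intersected with $\gph G$; letting $\varepsilon\downarrow 0$ yields the required limsup inequality. Since $(x^*,y^*)$ was an arbitrary element of $\widehat\partial^+\varphi(\bar x,\bar y)$, the inclusion $x_0^*\in x^*+\widehat D^*G(\bar x,\bar y)(y^*)$ holds for every such pair, and intersecting over all of them produces \eqref{inclusion}.

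I expect the only delicate point to be the handling of the middle bracket: one has to notice that only the \emph{inequality} $\mu(x)\leq\varphi(x,y)$ combined with the \emph{equality} $\mu(\bar x)=\varphi(\bar x,\bar y)$ is needed, so no lower semicontinuity or local inner-openness hypothesis on $G$ is required, and no a priori information about $\mu(x)$ near $\bar x$ (such as attainment of the infimum at nearby points) is used. The assumption $\widehat\partial^+\varphi(\bar x,\bar y)\neq\emptyset$ is what gives us a usable upper bound on the $\varphi$ increment, which is what makes the asymmetry between lower and upper subdifferentials essential in the estimate.
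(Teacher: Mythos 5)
Your proof is correct. Note that the paper itself does not prove this statement; it imports it verbatim from Mordukhovich--Nam--Yen (2009), so there is no in-paper argument to compare against. Your telescoping decomposition is exactly the right estimate: the first bracket is controlled by the analytic ($\varepsilon$-$\liminf$) description of $x_0^*\in\widehat\partial\mu(\bar x)$, the third by the analytic description of $(-x^*,-y^*)\in\widehat\partial(-\varphi)(\bar x,\bar y)$, and the middle bracket is nonpositive precisely because $\mu(x)\le\varphi(x,y)$ on $\gph G$ while $\mu(\bar x)=\varphi(\bar x,\bar y)$ at the reference point; the degenerate cases $\mu(x)=\pm\infty$ or $\varphi(x,y)=\pm\infty$ are harmless since they would force one of the two subdifferentials to be empty. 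The only difference from the source's proof is cosmetic: Mordukhovich--Nam--Yen first invoke the smooth variational description of Fr\'echet upper subgradients (a majorizing function $s\ge\varphi$ with $s(\bar x,\bar y)=\varphi(\bar x,\bar y)$ and $\nabla s(\bar x,\bar y)=(x^*,y^*)$) and then run the same chain of inequalities through $s$, whereas you work directly with the $\varepsilon$-inequalities; your route is more elementary and avoids the auxiliary existence result, at the small cost that you should explicitly cite the equivalence between the geometric (epigraphical) definition of $\widehat\partial$ used in the paper and the analytic $\liminf$ description you rely on.
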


To recall the sufficient conditions of \cite{Mordukhovich_Nam_Yen_2009} for the inclusion in \eqref{inclusion} to hold as equality, we need the following definitions. Let $X, Y$ be Banach spaces, $D\subset X$.
A map $h:D \rightarrow Y$ is said to be \textit{locally upper Lipschitzian} at $\bar x \in D$ if there are a neighborhood $U$ of $\bar x$ and a constant $\ell > 0$ such that
$$
\|h(x)-h(\bar x)\| \leq \ell\|x-\bar x\|\quad  \mbox{whenever}\ \; x\in D\cap U.
$$
We say that a set-valued map $F:D\rightrightarrows Y$ admits \textit{a local upper Lipschitzian selection} at $(\bar x, \bar y)\in \gph F$ if there is a map $h:D\rightarrow Y$, such that $h$ is locally upper Lipschitzian at $\bar x$,  $h(\bar x)=\bar y$, and $h(x)\in F(x)$ for all $x\in D$ in a neighborhood of $\bar x$. 

\begin{theorem}{\rm (See \cite[Theorem 2]{Mordukhovich_Nam_Yen_2009})}\label{Frechet subdif. MNY09}
In addition to the assumptions of Theorem~\ref{Frechet upper subdif. MNY09}, suppose that $\varphi$ is Fr\'echet differentiable at $(\bar x, \bar y)$, and the map $M:\dom G\rightrightarrows Y$ admits a local upper Lipschitzian selection at $(\bar x, \bar y)$. Then 
\begin{equation*}
\widehat \partial \mu(\bar x)=x^*+\widehat D^*G(\bar x, \bar y)(y^*),\ \; \mbox{where}\ \, (x^*, y^*):=\nabla\varphi(\bar x, \bar y).
\end{equation*} 
\end{theorem}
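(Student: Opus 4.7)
The plan is to obtain the one-sided inclusion $\widehat\partial\mu(\bar x)\subset x^*+\widehat D^*G(\bar x,\bar y)(y^*)$ directly from Theorem~\ref{Frechet upper subdif. MNY09}, and then to prove the reverse inclusion by working along the curve parametrized by the local upper Lipschitzian selection of $M$.

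For the first inclusion, I would note that Fr\'echet differentiability of $\varphi$ at $(\bar x,\bar y)$ forces both the Fr\'echet subdifferential and the Fr\'echet upper subdifferential of $\varphi$ at $(\bar x,\bar y)$ to collapse to $\{\nabla\varphi(\bar x,\bar y)\}=\{(x^*,y^*)\}$. Consequently, the intersection on the right-hand side of \eqref{inclusion} degenerates to the single set $x^*+\widehat D^*G(\bar x,\bar y)(y^*)$, and Theorem~\ref{Frechet upper subdif. MNY09} delivers the desired inclusion at no extra cost.

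For the reverse inclusion $\supset$, let $h:\dom G\to Y$ be a local upper Lipschitzian selection of $M$ at $(\bar x,\bar y)$ with constant $\ell>0$. Because $h(x)\in M(x)$ on a neighborhood $U$ of $\bar x$ and $h(\bar x)=\bar y$, one has the exact identity $\mu(x)=\varphi(x,h(x))$ on $U\cap\dom G$. Fix any $\xi^*\in x^*+\widehat D^*G(\bar x,\bar y)(y^*)$ and write $\xi^*=x^*+\eta^*$ with $(\eta^*,-y^*)\in\widehat N((\bar x,\bar y);\gph G)$. I would then expand $\varphi(x,h(x))-\varphi(\bar x,\bar y)$ via the Fr\'echet derivative $\nabla\varphi(\bar x,\bar y)=(x^*,y^*)$ and use the bound $\|h(x)-\bar y\|\le\ell\|x-\bar x\|$ to absorb the remainder $o(\|x-\bar x\|+\|h(x)-\bar y\|)$ into $o(\|x-\bar x\|)$. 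After cancellation of the $\langle x^*,x-\bar x\rangle$ terms, the required condition $\xi^*\in\widehat\partial\mu(\bar x)$ reduces to
$$\limsup_{x\to\bar x}\frac{\langle\eta^*,x-\bar x\rangle+\langle-y^*,h(x)-\bar y\rangle}{\|x-\bar x\|}\le 0.$$

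The Fr\'echet-normal property of $(\eta^*,-y^*)$ yields precisely this limsup, but with denominator $\|x-\bar x\|+\|h(x)-\bar y\|$ in place of $\|x-\bar x\|$, taken along any sequence $(u,v)\to(\bar x,\bar y)$ in $\gph G$; specializing to the curve $(x,h(x))\in\gph G$ is legitimate because the Lipschitz bound forces $h(x)\to\bar y$. The main technical point---and essentially the only step beyond routine Fr\'echet-calculus bookkeeping---is the passage between these two denominators: since $\|x-\bar x\|+\|h(x)-\bar y\|\in[\|x-\bar x\|,(1+\ell)\|x-\bar x\|]$, replacing the denominator inflates any positive numerator ratio by at most a factor $1+\ell$, which preserves $\limsup\le 0$ and finishes the proof sketch.
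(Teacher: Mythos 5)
This statement is quoted in the paper directly from Mordukhovich--Nam--Yen (Theorem 2 of the cited reference); the paper itself gives no proof, so there is nothing internal to compare against. Your argument is correct and is essentially the original proof of that result: Fr\'echet differentiability collapses $\widehat\partial^+\varphi(\bar x,\bar y)$ to $\{\nabla\varphi(\bar x,\bar y)\}$, so Theorem~\ref{Frechet upper subdif. MNY09} gives ``$\subset$'', and the reverse inclusion follows by writing $\mu(x)=\varphi(x,h(x))$ along the upper Lipschitzian selection, expanding $\varphi$, and transferring the Fr\'echet-normal inequality from the denominator $\|x-\bar x\|+\|h(x)-\bar y\|$ to $\|x-\bar x\|$ via the two-sided bound coming from $\|h(x)-\bar y\|\le\ell\|x-\bar x\|$. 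The only points left tacit are routine: the equivalence of the epigraphical definition of $\widehat\partial\mu(\bar x)$ used in the paper with the $\liminf$ difference-quotient description you implicitly invoke, and the observation that for $x\notin\dom G$ one has $\mu(x)=+\infty$, so those points contribute nothing to the $\liminf$.
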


Fr\'echet subgradients of the function $-v$ can be computed by next theorem.

\begin{theorem}\label{Frechet subdif. HYY17}
Let $\bar p\in \int Y_+$ and $\bar x \in D(\bar p) \setminus \{0\}$ be such that $D(\bar p)\neq \emptyset$, $X_+$ is SNC at $\bar x$, and $\widehat \partial u(\bar x)\neq \emptyset$. The following assertions hold:
\begin{itemize}
\item [{\rm (i)}] If $\langle \bar p, \bar x\rangle = 1$, then 
\begin{equation}\label{Fre. subgradients-upper inclusion-a}
\widehat \partial(-v)(\bar p)\subset \displaystyle\bigcap_{x^*\in -\widehat \partial u(\bar x)}\{\lambda \bar x  \;:\; \lambda\geq 0,\; x^*+\lambda \bar p \in -N(\bar x, X_+)\};
\end{equation}
\item[{\rm (ii)}] If $\langle \bar p, \bar x\rangle <1$, then 
\begin{equation}\label{Fre. subgradients-upper inclusion-b}
\widehat \partial(-v)(\bar p)\subset \{0\};
\end{equation}
\item[{\rm (iii)}] If $\langle \bar p, \bar x\rangle <1$ and $\,\widehat \partial u(\bar x)\setminus N(\bar x, X_+) \neq \emptyset$, then
\begin{equation}\label{Fre. subgradients-upper inclusion-c}
\widehat \partial(-v)(\bar p)=\emptyset;
\end{equation}
\item[{\rm (iv)}] If $u$ is Fr\'echet differentiable at $\bar x$, and the map $D:\dom B \rightrightarrows X_+$ admits a local upper Lipschitzian selection at $(\bar p, \bar x)$, then
\begin{align}\label{Fre. subgradients-upper equality}
\widehat \partial (-v)(\bar p)=\begin{cases}
\{\langle \nabla u(\bar x), \bar x\rangle\bar x\} \! &\! \mbox{if}\; \langle \bar p, \bar x\rangle = 1\\
\{0\}&\mbox{if}\; \langle \bar p, \bar x\rangle < 1.
\end{cases}
\end{align}
\end{itemize}
\end{theorem}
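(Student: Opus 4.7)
The plan is to identify the indirect utility function with the negative of the marginal function of a parametric optimization problem of the form \eqref{MNY_problem}, and then apply Theorems \ref{Frechet upper subdif. MNY09} and \ref{Frechet subdif. MNY09} together with the coderivative formula of Theorem \ref{code. of budget map}. Concretely, set $\varphi(p,x):=-u(x)$ and $G(p):=B(p)$; the marginal function becomes $\mu=-v$ and the solution map is $M=D$. The hypotheses $D(\bar p)\neq\emptyset$ and $\bar x\in D(\bar p)$ (so $v(\bar p)=u(\bar x)\in\R$) give $M(\bar p)\neq\emptyset$ and $|\mu(\bar p)|<\infty$. Since $\varphi$ does not depend on $p$, a direct computation yields $\widehat\partial^+\varphi(\bar p,\bar x)=\{0\}\times(-\widehat\partial u(\bar x))$, which is nonempty because $\widehat\partial u(\bar x)\neq\emptyset$.

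Assertions (i)--(iii) all come from Theorem \ref{Frechet upper subdif. MNY09}, which under the preparation above gives
\[
\widehat\partial(-v)(\bar p)\ \subset\ \bigcap_{x^*\in -\widehat\partial u(\bar x)}\widehat D^*B(\bar p,\bar x)(x^*).
\]
The standing hypotheses $\bar p\in\int Y_+$, $\bar x\in B(\bar p)\setminus\{0\}$, and $X_+$ SNC at $\bar x$ match exactly what Theorem \ref{code. of budget map} needs, so I can substitute the three-case formula for $\widehat D^*B(\bar p,\bar x)(x^*)$ term by term. In the case $\langle\bar p,\bar x\rangle=1$ this is \eqref{Fre. subgradients-upper inclusion-a} verbatim, proving (i). In the case $\langle\bar p,\bar x\rangle<1$ each set in the intersection is either $\{0\}$ or $\emptyset$, so the whole intersection is contained in $\{0\}$, yielding (ii). Moreover, if some $u^*\in\widehat\partial u(\bar x)\setminus N(\bar x;X_+)$ exists, then choosing $x^*=-u^*$ makes the corresponding coderivative set empty, forcing the intersection, and hence $\widehat\partial(-v)(\bar p)$, to be empty; this proves (iii).

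For (iv), Fr\'echet differentiability of $u$ at $\bar x$ makes $\varphi$ Fr\'echet differentiable at $(\bar p,\bar x)$ with $\nabla\varphi(\bar p,\bar x)=(0,-\nabla u(\bar x))$; combined with the existence of a local upper Lipschitzian selection for $M=D$, Theorem \ref{Frechet subdif. MNY09} upgrades the previous inclusion to the equality
\[
\widehat\partial(-v)(\bar p)\ =\ \widehat D^*B(\bar p,\bar x)\bigl(-\nabla u(\bar x)\bigr).
\]
When $\langle\bar p,\bar x\rangle=1$, Theorem \ref{code. of budget map} rewrites the right-hand side as $\{\lambda\bar x:\lambda\ge 0,\ \lambda\bar p\in\nabla u(\bar x)-N(\bar x;X_+)\}$, and Lemma \ref{neccessary condition lemma} collapses this parameter set to the singleton $\{\langle\nabla u(\bar x),\bar x\rangle\}$, producing $\{\langle\nabla u(\bar x),\bar x\rangle\bar x\}$. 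When $\langle\bar p,\bar x\rangle<1$, the same lemma gives $\nabla u(\bar x)\in N(\bar x;X_+)$, i.e.\ $-\nabla u(\bar x)\in -N(\bar x;X_+)$, so Theorem \ref{code. of budget map} returns $\{0\}$. The two cases together are \eqref{Fre. subgradients-upper equality}.

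The one item that needs genuine care is sign-bookkeeping: the Fr\'echet upper subdifferential of $\varphi=-u$ in the $x$-variable is $-\widehat\partial u(\bar x)$, so the arguments fed into $\widehat D^*B$ come with a built-in minus sign, which must then be reconciled with the clause ``$x^*\in -N(\bar x;X_+)$'' appearing in Theorem \ref{code. of budget map}. Beyond this, no real obstacle is anticipated, because the SNC assumption on $X_+$ at $\bar x$ and the interior-cone assumption on $\bar p$ have already absorbed the heavy analytic lifting into Theorems \ref{code. of budget map} and \ref{Frechet subdif. MNY09}.
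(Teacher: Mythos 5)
Your proposal is correct and follows essentially the same route as the paper: reformulate the consumer problem as the marginal function of \eqref{MNY_problem} with $\varphi(p,x)=-u(x)$ and $G=B$, apply Theorems \ref{Frechet upper subdif. MNY09} and \ref{Frechet subdif. MNY09}, and then evaluate $\widehat D^*B(\bar p,\bar x)$ via Theorem \ref{code. of budget map} together with Lemma \ref{neccessary condition lemma} for part (iv). The sign-bookkeeping point you flag is exactly the only delicate step, and you handle it as the paper does.
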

\begin{proof} To transform the consumer problem in \eqref{initial problem} to the minimization problem \eqref{MNY_problem}, we let $X^*$ (resp., $X$) play the role of $X$ (resp., $Y$). Put $\varphi(x^*, x)=-u(x)$ for $(x^*, x)\in X^*\times X$. Besides, let $G(x^*)=\left\{x\in X_+:\langle x^*,x\rangle \le 1 \right\}$ for $x^*\in Y_+$, $G(x^*)=\emptyset$ otherwise. 
From \eqref{budget set}, \eqref{indirect utility function}, \eqref{demand_map}, and the conventions made, one deduces that $G(x^*)=B(x^*)$, $\mu(x^*)=-v(x^*)$, and $M(x^*)=D(x^*)$  for all $x^*\in X^*$. 

Let $\bar p\in \int Y_+$ and $\bar x \in D(\bar p)\setminus \{0\}$ satisfy the assumptions of the theorem. Using the definitions of Fr\'echet subdifferential and Fr\'echet upperdifferential, one can show that 
\begin{equation*}
\widehat \partial^+\varphi(x^*, x)=\{0\}\times \big(-\widehat \partial u(x)\big) \quad\ ((x^*, x)\in X^*\times X).
\end{equation*}
Hence, the assumption $\widehat \partial u(\bar x)\neq \emptyset$ implies $\widehat \partial^+\varphi(\bar p, \bar x) \neq \emptyset$. Applying Theorem \ref{Frechet upper subdif. MNY09} for $\mu(\cdot)=(-v)(.)$, $M(\cdot)=D(\cdot)$, and $(\bar x, \bar y):=(\bar p, \bar x)$, we have
\begin{equation}\label{ghi}
\widehat \partial(-v)(\bar p)\subset\bigcap_{(x, x^*)\in \widehat \partial^+\varphi(\bar p, \bar x)}\big[x+\widehat D^*B(\bar p, \bar x)(x^*)\big]\\= \bigcap_{x^*\in -\widehat \partial(u)(\bar x)}\widehat D^*B(\bar p, \bar x)(x^*).
\end{equation}
\hskip0.7cm (i) If $\langle \bar p, \bar x\rangle = 1$, then \eqref{ghi} and \eqref{code. of budget map formula} imply \eqref{Fre. subgradients-upper inclusion-a}. 

(ii) If $\langle \bar p, \bar x\rangle < 1$, then by \eqref{code. of budget map formula} one has $\widehat D^*B(\bar p, \bar x)(x^*)\subset \{0\}$ for every $x^*\in X^*$. It follows that
$$\bigcap_{x^*\in -\widehat \partial(u)(\bar x)}\widehat D^*B(\bar p, \bar x)(x^*)\subset \{0\}.$$
 Hence, \eqref{ghi} yields \eqref{Fre. subgradients-upper inclusion-b}.

(iii) If $\langle \bar p, \bar x\rangle <1$ and $\widehat \partial u(\bar x)\setminus N(\bar x, X_+) \neq \emptyset$, then there exist $\bar x^* \in -\widehat \partial u(\bar x)$ such that $\bar x^* \notin -N(\bar x, X_+)$. By \eqref{ghi} and \eqref{code. of budget map formula}, we have 
\begin{equation*}
\widehat \partial(-v)(\bar p)\subset\bigcap_{x^*\in -\widehat \partial(u)(\bar x)}\widehat D^*B(\bar p, \bar x)(x^*)\subset \widehat D^*B(\bar p, \bar x)(\bar x^*)= \emptyset.
\end{equation*}
Hence, \eqref{Fre. subgradients-upper inclusion-c} is valid.

(iv) Now, suppose that $u$ is Fr\'echet differentiable at $\bar x$, and $D:\mbox{dom}B \rightrightarrows X_+$ admits a local upper Lipschitzian selection at $(\bar p, \bar x)$. Since $u$ is Fr\'echet differentiable at $\bar x$, $\nabla \varphi(\bar p, \bar x)=(0, -\nabla u(\bar x))$. By Theorem \ref{Frechet subdif. MNY09}, one gets
\begin{equation}\label{tuv}
\widehat \partial (-v)(\bar p)= \widehat D^*B(\bar p, \bar x)(-\nabla u(\bar x)).
\end{equation}
Since $\widehat D^*B(\bar p, \bar x)(-\nabla u(\bar x))$ can be computed by \eqref{code. of budget map formula} with $x^*:=-\nabla u(\bar x)$ and by Lemma~\ref{neccessary condition lemma}, formula \eqref{Fre. subgradients-upper equality} follows from \eqref{tuv}.
\end{proof}

The upper estimates for the subdifferential $\widehat\partial(-v)(\bar p)$ provided by Theorem \ref{Frechet subdif. HYY17} are sharp. Moreover, under some mild conditions, the set on the right-hand side of \eqref{Fre. subgradients-upper inclusion-a} is a singleton. In addition, if  the indirect utility function $v$ is Fr\'echet differentiable at $\bar p$, then its derivative at $\bar p$ can be easily computed by using \eqref{Fre. subgradients-upper inclusion-a} and \eqref{Fre. subgradients-upper inclusion-b}. The following statement justifies our observations.
 
\begin{corollary} 
Let $\bar p\in \int Y_+$ and $\bar x \in D(\bar p) \setminus \{0\}$ be such that $X_+$ is SNC at $\bar x$, and $u$ is Fr\'echet differentiable at $\bar x$. Then \begin{equation}\label{Fre. subgradients-upper inclusion}
\widehat\partial(-v)(\bar p)\subset \begin{cases}
\{\langle \nabla u(\bar x), \bar x\rangle \bar x\} &\mbox{if } \langle \bar p, \bar x\rangle =1\\
\{0\} & \mbox{if } \langle \bar p, \bar x\rangle < 1.
\end{cases}
\end{equation}
Consequently, if the indirect utility function $v$ is Fr\'echet differentiable at $\bar p$, then
\begin{equation}\label{Fre. equality}
\nabla v(\bar p)=\begin{cases}
-\langle \nabla u(\bar x), \bar x\rangle \bar x &\mbox{if } \langle \bar p, \bar x\rangle =1\\
0 & \mbox{if } \langle \bar p, \bar x\rangle < 1.
\end{cases}
\end{equation}
\end{corollary}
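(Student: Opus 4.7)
The plan is to derive \eqref{Fre. subgradients-upper inclusion} as a direct specialization of Theorem \ref{Frechet subdif. HYY17}(i)--(ii) under the extra assumption that $u$ is Fr\'echet differentiable at $\bar x$, and then to upgrade the two branches to the equalities in \eqref{Fre. equality} by exploiting the fact that Fr\'echet differentiability of $v$ at $\bar p$ forces $\widehat\partial(-v)(\bar p)$ to be a nonempty singleton.

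First, I would verify that the standing hypotheses of Theorem \ref{Frechet subdif. HYY17} are in force: $\bar p\in \int Y_+$, $\bar x\in D(\bar p)\setminus\{0\}$, $X_+$ is SNC at $\bar x$, $D(\bar p)\ni\bar x$ is nonempty, and the Fr\'echet differentiability of $u$ at $\bar x$ makes $\widehat\partial u(\bar x)=\{\nabla u(\bar x)\}$ a nonempty singleton. With these in hand, the case $\langle\bar p,\bar x\rangle=1$ falls out of \eqref{Fre. subgradients-upper inclusion-a}: the intersection over $x^*\in-\widehat\partial u(\bar x)$ collapses to the single set
$$\{\lambda\bar x\;:\;\lambda\ge 0,\;-\nabla u(\bar x)+\lambda\bar p\in -N(\bar x;X_+)\}.$$
Rewriting the inclusion as $\lambda\bar p\in\nabla u(\bar x)-N(\bar x;X_+)$ lets me invoke Lemma \ref{neccessary condition lemma} to conclude that the only admissible $\lambda$ is $\langle\nabla u(\bar x),\bar x\rangle$, giving the first branch of \eqref{Fre. subgradients-upper inclusion}. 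The case $\langle\bar p,\bar x\rangle<1$ is even more immediate: it is already covered by Theorem \ref{Frechet subdif. HYY17}(ii), which yields $\widehat\partial(-v)(\bar p)\subset\{0\}$.

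For the consequence, assume in addition that $v$ is Fr\'echet differentiable at $\bar p$. Then so is $-v$, and by the standard identification of the Fr\'echet subdifferential of a Fr\'echet differentiable function with its derivative (the scalar counterpart of Proposition \ref{Coderi. of diffe. functions}) one has $\widehat\partial(-v)(\bar p)=\{-\nabla v(\bar p)\}$. Coupling this nonempty singleton with the inclusion \eqref{Fre. subgradients-upper inclusion} forces equality branch-by-branch, producing \eqref{Fre. equality}. I do not anticipate any genuine obstacle, since the result is essentially a specialization of the machinery already assembled in the preceding theorem and lemma; the only bookkeeping point that deserves attention is the sign-tracking when rewriting the inclusion $-\nabla u(\bar x)+\lambda\bar p\in -N(\bar x;X_+)$ in the form required by Lemma \ref{neccessary condition lemma}.
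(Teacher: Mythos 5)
Your proposal is correct and follows essentially the same route as the paper: specialize Theorem \ref{Frechet subdif. HYY17} using $\widehat\partial u(\bar x)=\{\nabla u(\bar x)\}$, identify the admissible $\lambda$ via Lemma \ref{neccessary condition lemma}, and then use the singleton identity $\widehat\partial(-v)(\bar p)=\{-\nabla v(\bar p)\}$ to turn the inclusion into the stated equality. No gaps.
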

\begin{proof}
Let $\bar p$, $\bar x$, and $u$ satisfy our assumptions. Then one has  $\widehat \partial u(\bar x)=\{\nabla u(\bar x)\}$. Hence, \eqref{Fre. subgradients-upper inclusion} follows from Theorem \ref{Frechet subdif. HYY17} and Lemma~\ref{neccessary condition lemma}.

If $v$ is Fr\'echet differentiable at $\bar p$, then $\widehat\partial v(\bar p)=-\widehat\partial(-v)(\bar p)=\{\nabla v(\bar p)\}$ by \cite[Proposition~1.87]{Mordukhovich_2006a}. So, \eqref{Fre. equality} follows from \eqref{Fre. subgradients-upper inclusion}.

The proof is complete.
\end{proof}
To obtain another corollary from Theorem \ref{Frechet subdif. HYY17}, we now recall a well-known concept in mathematical economics. One says (see, e.g., \cite[p.~1076]{Penot_2014}) that the consumer problem~\eqref{initial problem} satisfies the \textit{non satiety condition} (NSC) if 
 $$\left[(x, p)\in X_+\times Y_+,\; \langle p, x \rangle <1\right]\, \Longrightarrow\, \left[\exists x'\in X_+,\; u(x')>u(x)\right].$$ As it has been noted in \cite{Penot_2014}, NSC is equivalent to the following condition:
$$\left[(x, p)\in X_+\times Y_+,\; \langle p, x \rangle <1\right]\, \Longrightarrow\, u(x)< v(p).$$ Moreover, one can easily prove next lemma, which characterizes NSC via the demand map.

\begin{lemma}\label{NSC} {\rm (See \cite[Lemma~4.2]{Penot_2014})}
The NSC is satisfied if and only if for all $p\in Y_+$, one has $D(p)\subset \{x \in X \,:\, \langle p, x \rangle = 1\}.$
\end{lemma}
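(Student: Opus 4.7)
The plan is to prove both implications by routine unpacking of definitions, using the equivalent reformulation of NSC recalled just before the lemma, namely that NSC holds iff
\begin{equation*}
\left[(x,p)\in X_+\times Y_+,\ \langle p,x\rangle <1\right]\ \Longrightarrow\ u(x)<v(p).
\end{equation*}

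For the ``only if'' part, I would assume NSC, fix $p\in Y_+$ and $x\in D(p)$, and show $\langle p,x\rangle=1$ by contradiction. By \eqref{demand_map}, $x\in B(p)$ and $u(x)=v(p)$; if one had $\langle p,x\rangle<1$, then the equivalent reformulation above would yield $u(x)<v(p)$, contradicting $u(x)=v(p)$. Hence $\langle p,x\rangle=1$, giving the claimed inclusion.

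For the ``if'' part, I would assume that $D(p)\subset\{x\in X:\langle p,x\rangle=1\}$ for every $p\in Y_+$ and verify the reformulated NSC. Fix $(x,p)\in X_+\times Y_+$ with $\langle p,x\rangle<1$. Then $x\in B(p)$, so by \eqref{indirect utility function} we have $v(p)\ge u(x)$, and in particular $v(p)>-\infty$. Since $\langle p,x\rangle\neq 1$, the assumption forces $x\notin D(p)$; combined with $x\in B(p)$, \eqref{demand_map} then gives $u(x)\neq v(p)$, hence $u(x)<v(p)$. This is exactly the equivalent form of NSC, so the original NSC holds (for the existence of $x'\in X_+$ with $u(x')>u(x)$, one uses that $v(p)=\sup\{u(x'):x'\in B(p)\subset X_+\}$ strictly exceeds $u(x)$, so some $x'\in B(p)\subset X_+$ achieves $u(x')>u(x)$, regardless of whether $v(p)$ is finite or $+\infty$).

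There is essentially no substantive obstacle here: the argument is a tautological manipulation of the definitions of $B$, $D$, $v$, and NSC, together with the already noted equivalence. The only small point worth being careful about is ensuring that the conversion between the original NSC (``there exists a strictly better $x'\in X_+$'') and its sup-form (``$u(x)<v(p)$'') is applied in the right direction in each implication; this is harmless because $u$ is real-valued on $X_+$ and $B(p)\subset X_+$, so passing between a strict inequality and the existence of a witness causes no trouble.
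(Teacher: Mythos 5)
Your argument is correct as written, and since the paper offers no proof of this lemma (it simply cites \cite[Lemma~4.2]{Penot_2014} and remarks that the proof is easy), your derivation from the reformulation ``$\langle p,x\rangle<1\Rightarrow u(x)<v(p)$'' stated just before the lemma is surely the intended one. Both implications are handled properly; in particular the ``if'' direction is fully self-contained, and your closing remark about extracting a witness $x'$ from the strict inequality $u(x)<v(p)=\sup\{u(x'):x'\in B(p)\}$ is the right way to pass back to the original formulation of NSC.

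One caution, though, about your claim that everything here is a ``tautological manipulation'' and that the conversion between the two forms of NSC is ``harmless''. The direction you actually use in the ``only if'' part --- from the literal NSC (existence of a better bundle $x'$ merely in $X_+$) to the inequality $u(x)<v(p)$ --- is not a definitional unpacking: the better bundle $x'$ produced by NSC need not be affordable at the price $p$, i.e.\ need not lie in $B(p)$, so by itself it says nothing about $v(p)=\sup\{u(x'):x'\in B(p)\}$. For an arbitrary real-valued $u$ on $X_+$ this implication can genuinely fail: with $X=\R$, $X_+=\R_+$, $u(x)=\min\{x,2-x\}$ on $[0,2]$ and $u(x)=x-2$ for $x\ge 2$, the literal NSC holds (since $u$ is unbounded above on $X_+$), yet $D(1/2)=\{1\}$ and $\langle 1/2,1\rangle<1$, so the forward implication of the lemma breaks down. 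Your proof survives because you import the equivalence exactly as the paper asserts it (and as Penot presumably establishes under his own formulation of non-satiety, where the improving bundle is taken inside the budget set), but you should recognize that all of the content of the ``only if'' direction is concentrated in that imported implication, not in the surrounding bookkeeping.
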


Consumer problems with the Cobb-Douglas utility functions satisfy the non satiety condition.
\begin{ex}{\rm 
Suppose that there are $n$ types of available
	goods. The quantities of goods purchased by the
	consumer form the good bundle $x=(x_1, \dots , x_n)$, where $x_i$ is the purchased quantity of the $i$-th good, $i=1, \dots, n$. Assume that each good is perfectly divisible so that any nonnegative quantity can be purchased. Good bundles are vectors in the commodity space $X:=~\R^n$. The set of all possible good bundles
	$$
	X_+:=\big\{x=(x_1, \dots , x_n)\in \R^n \;:\; x_1\geq 0, \dots, x_n \geq 0 \big\}
	$$ 
	is the nonnegative orthant of $\R^n$. The set of prices is 
	$$Y_+=\{p=(p_1, \dots , p_n)\in \R^n \;:\; p_1\geq 0, \dots, p_n \geq 0 \}.$$
	For every $p=(p_1, \dots , p_n)\in Y_+$, $p_i$ is the price of the $i$-th good, $i=1, \dots, n$. 
Given some numbers $A>0$, $\alpha_1, \alpha_2, \dots, \alpha_n \in (0, 1)$, consider the utility function $u:X\to \R_+$ defined by
$u(x)=Ax^{\alpha_1}_1x^{\alpha_2}_2\dots x^{\alpha_n}_n$ for any $x\in \int X_+$. (Recall \cite[p.~96]{Zakon_2017} that $0^\alpha =0$ for any $\alpha>0$.) Clearly, $u$ is strictly increasing in each variable on $\int X_+$. Take $p=(p_1, \dots , p_n)\in Y_+$ and $x=(x_1, \dots , x_n)\in X_+$ satisfying $\langle p, x \rangle <1$. If $x \notin \int X_+$, then we choose $x'=(x'_1, x'_2,\dots, x'_n)$ such that $x'_i \in (0, 1)$ and $\sum_{1}^{n}p_ix'_i \leq 1$. Then $\langle p, x' \rangle \leq 1$ and $x'\in \int X_+$, and therefore $u(x')>0=u(x)$. Consider the case where $x \in \int X_+$. If $p=0$, then by choosing $x'=(x'_1, x_2,\dots, x_n)$ with $x'_1>x_1$, one gets $\langle p, x' \rangle <1$ and $x' \in \int X_+$. Hence, $u(x')>u(x)$ as $u$ is strictly increasing on $\int \R_+$ w.r.t. the first variable. If $p\neq0$, then there exists $i_0$ such that $p_{i_0}>0$. We choose $x'=(x'_1, x'_2,\dots, x'_n)$ with $x'_{i_0}=(1-\sum_{i\neq i_0}p_ix_i)/p_{i_0}$ and $x'_i=x_i$ for all $i\neq i_0$. It follows that $\langle p, x' \rangle =1$, $x' \in \int X_+$, and $x'_{i_0}>x_{i_0}$. As $u$ is strictly increasing on $\int \R_+$ w.r.t. the $i_0$-th variable, one gets $u(x')>u(x)$. We have shown that, for any pair $(p, x)\in Y_+\times X_+$ with $\langle p, x \rangle <1$, there exists $x'\in X_+$ such that $\langle p, x' \rangle \leq 1$ and $u(x')>u(x)$. Thus, the NSC is satisfied. }
\end{ex}

In next corollary, it is not assumed a priori that the demand set $D(\bar p)$ is nonempty.

\begin{corollary}
Suppose that $X_+$ has nonempty interior and the NSC is satisfied. If $u(\cdot)$ is concave and upper semicontinuous on $X_+$, and Fr\'echet differentiable on $D(\bar p)$ then, for any $\bar p \in \int Y_+$, one has
\begin{equation}\label{xyzk}
\widehat\partial (-v)(\bar p)\subset \{\langle \nabla u(\bar x), \bar x\rangle \bar x\;:\; \bar x\in D(\bar p)\}.
\end{equation}
\end{corollary}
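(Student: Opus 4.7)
The plan is to combine Theorem~\ref{Frechet subdif. HYY17}(i) with Lemma~\ref{neccessary condition lemma}, once I have verified that the demand set $D(\bar p)$ is nonempty so that those earlier results can be invoked at an actual maximizer.

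First I would show $D(\bar p)\neq\emptyset$ by a weak-compactness argument. Since $\bar p\in\int Y_+$, there exists $\epsilon>0$ with $\bar p+\epsilon\bar B_{X^*}\subset Y_+$; pairing with an arbitrary $x\in X_+$ and using symmetry of the unit ball yields $\langle\bar p,x\rangle\geq\epsilon\|x\|$, hence $B(\bar p)\subset(1/\epsilon)\bar B_X$. Thus $B(\bar p)$ is bounded, closed, convex, and nonempty (it contains $0$), so by reflexivity of $X$ it is weakly compact. Because $u$ is concave and norm-upper semicontinuous on $X_+$, its hypograph is convex and norm-closed, hence weakly closed; therefore $u$ is weakly upper semicontinuous on $X_+$ and attains its maximum on the weakly compact set $B(\bar p)$. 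This gives $D(\bar p)\neq\emptyset$.

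Next, fix any $\bar x\in D(\bar p)$. The NSC combined with Lemma~\ref{NSC} forces $\langle\bar p,\bar x\rangle=1$ and hence $\bar x\neq 0$. Nonemptiness of $\int X_+$ together with the convexity of $X_+$ yields the SNC property of $X_+$ at $\bar x$, as already observed in the proof of Lemma~\ref{code. of set-valued map lemma}, while Fr\'echet differentiability of $u$ at $\bar x$ gives $\widehat\partial u(\bar x)=\{\nabla u(\bar x)\}\neq\emptyset$. All hypotheses of Theorem~\ref{Frechet subdif. HYY17}(i) are therefore in place, and that theorem produces
\[
\widehat\partial(-v)(\bar p)\subset\{\lambda\bar x\;:\;\lambda\geq 0,\;\lambda\bar p\in\nabla u(\bar x)-N(\bar x;X_+)\}.
\]
Lemma~\ref{neccessary condition lemma} then collapses the admissible $\lambda$-set to the single value $\langle\nabla u(\bar x),\bar x\rangle$, yielding $\widehat\partial(-v)(\bar p)\subset\{\langle\nabla u(\bar x),\bar x\rangle\bar x\}$. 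Taking the union of these singletons as $\bar x$ ranges over $D(\bar p)$ gives the desired inclusion~\eqref{xyzk}.

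The main and essentially only obstacle is the existence step for $D(\bar p)$; once a maximizer is in hand, the rest of the argument is a direct assembly of Theorem~\ref{Frechet subdif. HYY17}(i) and Lemma~\ref{neccessary condition lemma}. That existence argument rests on three ingredients already in force: reflexivity of $X$, norm boundedness of $B(\bar p)$ coming from $\bar p\in\int Y_+$, and the elevation from norm to weak upper semicontinuity via concavity of $u$.
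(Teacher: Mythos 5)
Your proof is correct and follows essentially the same route as the paper: establish $D(\bar p)\neq\emptyset$ from the weak compactness of $B(\bar p)$ and the weak upper semicontinuity of the concave, norm-u.s.c.\ function $u$ (the paper simply cites Penot's Proposition~4.1 for this step), then for each $\bar x\in D(\bar p)$ use the NSC and Lemma~\ref{NSC} to get $\langle\bar p,\bar x\rangle=1$, obtain the SNC property of $X_+$ from $\int X_+\neq\emptyset$, and combine Theorem~\ref{Frechet subdif. HYY17}(i) with Lemma~\ref{neccessary condition lemma}. Your explicit estimate $\langle\bar p,x\rangle\geq\epsilon\|x\|$ on $X_+$ giving boundedness of $B(\bar p)$ is a self-contained replacement for that citation, and your observation that the NSC forces $\bar x\neq 0$ makes explicit a hypothesis of Theorem~\ref{Frechet subdif. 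HYY17} that the paper's proof leaves implicit.
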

\begin{proof}
Let $\bar p \in \int Y_+$. Since $u(\cdot)$ is concave and upper semicontinuous on $X_+$, it is weakly upper semicontinuous on $X_+$. Hence, by \cite[Proposition~4.1]{Penot_2014} we have $D(\bar p)\neq\emptyset$. Take any $\bar x \in D(\bar p)$. Since $X_+$ is a convex set with nonempty interior, it is SNC at $\bar x$. Besides, as the NSC is satisfied, Lemma \ref{NSC} implies $\langle \bar p, \bar x \rangle = 1$. Due to this and the Fr\'echet differentiability of $u(\cdot)$ on $D(\bar p)$, formula \eqref{Fre. subgradients-upper inclusion-a} and Lemma~\ref{neccessary condition lemma} give $\widehat\partial (-v)(\bar p)\subset\{\langle \nabla u(\bar x), \bar x\rangle \bar x\}$. Since $\bar x \in D(\bar p)$ is arbitrarily chosen, from the last inclusion we obtain \eqref{xyzk}. \end{proof}

\subsection{Economic meaning of an exact formula for $\widehat \partial (-v)(\bar p)$}\label{eco meaning 1} 
Suppose that $\bar p \in \int Y_+$ and $\bar\xi \in \widehat \partial (-v)(\bar p)$. By \cite[Theorem~1.88]{Mordukhovich_2006a}, there exists a function $s: X^*\rightarrow \overline \R$ that is finite around $\bar p$, Fr\'echet differentiable at $\bar p$, such that
\begin{equation}\label{variational description}
s(\bar p)=-v(\bar p), \quad \nabla s(\bar p)=\bar\xi, \quad \mbox{and}\quad s(x^*)\le -v(x^*)\quad \mbox{for all}\; x^*\in X^*.
\end{equation}
Fix a vector $q \in X^*$. If $t>0$ is small enough, then the Fr\'echet differentiability of $s$ at $\bar p$ implies
\begin{equation*}
s(\bar p+tq)=s(\bar p)+t\langle \nabla s(\bar p), q\rangle+o(t)
\end{equation*}
with $\disp\lim_{t\rightarrow 0^+}\frac{o(t)}{t}=0$. Combining this with \eqref{variational description} gives
\begin{equation*}
-\langle \bar\xi, q \rangle\geq \dfrac{v(\bar p+tq)-v(\bar p)}{t}+\dfrac{o(t)}{t},
\end{equation*}
for $t>0$ small enough. Hence, we get
\begin{equation*}
-\langle \bar\xi, q \rangle\geq \limsup_{t\rightarrow 0^+}\dfrac{v(\bar p+tq)-v(\bar p)}{t}=: d^+v(\bar p; q),
\end{equation*}where $d^+v(\bar p; q)$ stands for the \textit{upper Dini directional derivative} of $v$ at $\bar p$ in direction $q$. Thus, if $\widehat \partial (-v)(\bar p)$ is nonempty, then
\begin{equation}\label{sharp_estimate}
d^+v(\bar p; q)\leq \inf_{\xi \in \widehat \partial (-v)(\bar p)}[-\langle \xi, q \rangle].
\end{equation}
If a formula for exact computation of $\widehat \partial (-v)(\bar p)$ is available, then \eqref{sharp_estimate} provides us with a sharp upper estimate for the value $d^+v(\bar p; q)$.

Since $\dfrac{v(\bar p+tq)-v(\bar p)}{t}$ is the average rate of the change of the maximal satisfaction, represented by the indirect utility function $v$, of the consumer when the price moves slightly forward direction $q$ from the current price $\bar p$, \textit{the upper Dini directional derivative $d^+v(\bar p; q)$ can be interpreted as an upper bound for the instant rate of the change of the maximal satisfaction of the consumer.} Therefore, the estimate given by \eqref{sharp_estimate} reads as follows: \textit{If the current price is $\bar p$ and the price moves forward a direction $q\in X^*$, then the instant rate of the change of the maximal satisfaction of the consumer is bounded above by the real number $\displaystyle\inf_{\xi \in \widehat \partial (-v)(\bar p)}[-\langle \xi, q\rangle]$.}

If $\widehat \partial (-v)(\bar p)=\emptyset$, then the estimate in \eqref{sharp_estimate} is trivial because $\inf\emptyset=+\infty$. If $u$ is weakly upper semicontinuous and strongly lower semicontinuous on $X_+$, then $v$ is strongly continuous on $\int Y_+$ by \cite[Theorem~3.2]{Huong_Yao_Yen_2016}. In particular, if $X$ is finite-dimensional and $u$ is continuous on  $X_+$, then $v$ is continuous on $\int Y_+$. Another sufficient condition for the continuity of $v$ is the following:  $u$ is concave and strongly continuous on $X_+$ (then $u$ is both weakly upper semicontinuous and  strongly lower semicontinuous on $X_+$). Now, suppose that $v$ is continuous and concave on $\int Y_+$. Then, for every $\bar p\in \int Y_+$, $\widehat \partial (-v)(\bar p)$ and bounded in the weak$^*$ topology by \cite[Prop.~3, p.~199]{Ioffe_Tihomirov_1979}. In fact, since the continuity and concavity of  $v$ on $\int Y_+$ imply that $-v$ is locally Lipschitz and convex on $\int Y_+$ (see, e.g., \cite[Corollary~3.10]{Penot_book2013}), $\widehat \partial (-v)(\bar p)$ is weakly$^*$ compact for every $\bar p\in \int Y_+$.

\subsection{Limiting and singular subgradients}

Next, we will use Theorem 7 from \cite{Mordukhovich_Nam_Yen_2009} to estimate the limiting and singular subdifferentials of the function $-v$. The formulation of that theorem is based on some definitions related to the solution map $M(\cdot)$ of problem \eqref{MNY_problem}. Let $\bar x \in \dom M$ and $\bar y \in M(\bar x)$. One says that $M(\cdot)$ is $\mu$\textit{-inner semicontinuous} at $(\bar x, \bar y)$ if for every sequence $x_k \overset{\mu}{\rightarrow}\bar x$ there is a sequence $y_k \in M(x_k)$ that contains a subsequence converging to $\bar y$. The map $M(\cdot)$ is said to be $\mu$\textit{-inner semicompact} at $\bar x$ if for every sequence $x_k \overset{\mu}{\rightarrow}\bar x$ there is a sequence $y_k \in M(x_k)$ that contains a convergent subsequence.

\begin{theorem}\label{lim. upper subdif. MNY09}{\rm (See \cite[Theorem 7]{Mordukhovich_Nam_Yen_2009})} Let $X, Y$ be Asplund spaces, $M(\cdot): X \rightrightarrows Y$ be the solution map of the parametric problem \eqref{MNY_problem}, and let $(\bar x, \bar y) \in \gph M$ be such that $\varphi$ is lower semicontinuous at $(\bar x, \bar y)$ and $G$ is locally closed around this point. The following statements hold:
\begin{itemize}
\item [{\rm (i)}] Assume that $M(\cdot)$ is $\mu$-inner semicontinuous at $(\bar x, \bar y)$, that either $\epi \varphi$ is SNC at $(\bar x, \bar y,\varphi(\bar x, \bar y))$ or $G$ is SNC at $(\bar x, \bar y)$, and that the qualification condition 
\begin{equation}\label{qualification-Thm 7-MNY09}
\partial^{\infty}\varphi(\bar x, \bar y)\cap (-N((\bar x, \bar y); \gph G))=\{(0, 0)\}
\end{equation}
is satisfied; the above assumptions are automatic if $\varphi$ is locally Lipschitz around $(\bar x, \bar y)$. Then one has the inclusions
\begin{align*}
\partial \mu (\bar x)& \subset \bigcup\left\{x^*+D^*G(\bar x, \bar y)(y^*) : (x^*, y^*) \in \partial \varphi (\bar x, \bar y) \right\},\\
\partial^{\infty} \mu (\bar x)& \subset \bigcup\left\{x^*+D^*G(\bar x, \bar y)(y^*) : (x^*, y^*) \in \partial^{\infty} \varphi (\bar x, \bar y) \right\}.
\end{align*}
\item [{\rm (ii)}] Assume that $M(\cdot)$ is $\mu$-inner semicompact at $\bar x$ and that the other assumption of {\rm (i)} are satisfied at any $(\bar x, \bar y) \in \gph M$. Then one has the inclusions
\begin{align*}
\partial \mu (\bar x)& \subset \bigcup_{\bar y \in M(\bar x)}\left\{x^*+D^*G(\bar x, \bar y)(y^*) : (x^*, y^*) \in \partial \varphi (\bar x, \bar y) \right\},\\
\partial^{\infty} \mu (\bar x)& \subset \bigcup_{\bar y \in M(\bar x)}\left\{x^*+D^*G(\bar x, \bar y)(y^*) : (x^*, y^*) \in \partial^{\infty} \varphi (\bar x, \bar y) \right\}.
\end{align*}
\item [{\rm (iii)}] In addition to {\rm (i)}, assume that $\varphi$ is strictly differentiable at $(\bar x, \bar y)$, the map $M\!:\!\dom G\rightrightarrows Y$ admits a local upper Lipschitzian selection at $(\bar x, \bar y)$, and $G$ is normally regular at $(\bar x, \bar y)$. Then $\mu$ is lower regular at $\bar x$ and 
\begin{equation*}
\partial \mu (\bar x)=\nabla_x\varphi(\bar x, \bar y)+D^*G(\bar x, \bar y)(\nabla_y\varphi(\bar x, \bar y)).
\end{equation*}
\end{itemize}
\end{theorem}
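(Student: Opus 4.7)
The plan is to reduce $\mu$ to the marginal function of a single augmented objective and then invoke the limiting sum rule in Asplund spaces. Define $\Phi:X\times Y\to\overline{\R}$ by $\Phi(x,y):=\varphi(x,y)+\delta((x,y);\gph G)$, so that $\mu(x)=\inf_y \Phi(x,y)$ and $M(x)$ is exactly the set of minimizers of $\Phi(x,\cdot)$. The whole theorem is then handled in two stages: (a) pass from subdifferentials of $\mu$ at $\bar x$ to subdifferentials of $\Phi$ at $(\bar x,\bar y)$ with a zero second component, and (b) split $\partial\Phi$ (resp.\ $\partial^\infty\Phi$) via the Mordukhovich sum rule into $\partial\varphi$ (resp.\ $\partial^\infty\varphi$) plus $N(\,\cdot\,;\gph G)$.

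For stage (a), I would take $x^*\in\partial\mu(\bar x)$ and, from the definition of the limiting subdifferential, extract sequences $x_k\overset{\mu}{\to}\bar x$ together with $\varepsilon_k\downarrow 0$ and Fr\'echet $\varepsilon_k$-normals $x_k^*\overset{\omega^*}{\to} x^*$ satisfying $(x_k^*,-1)\in\widehat N_{\varepsilon_k}((x_k,\mu(x_k));\epi\mu)$. The $\mu$-inner semicontinuity in case (i) (or inner semicompactness plus a subsequence argument in case (ii)) provides $y_k\in M(x_k)$ with $y_k\to\bar y$. Since $(x_k,y_k,\mu(x_k))\in\epi\Phi$, and any Fr\'echet $\varepsilon_k$-normal to $\epi\mu$ lifts to a Fr\'echet $\varepsilon_k$-normal to $\epi\Phi$ with a zero middle component, passage to the weak-star limit yields $(x^*,0,-1)\in N((\bar x,\bar y,\Phi(\bar x,\bar y));\epi\Phi)$, that is, $(x^*,0)\in\partial\Phi(\bar x,\bar y)$. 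The same sequential argument with vertical component $0$ in place of $-1$ delivers the analogous statement for $x^*\in\partial^\infty\mu(\bar x)$.

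For stage (b), the qualification condition \eqref{qualification-Thm 7-MNY09} is exactly what the Mordukhovich sum rule in Asplund spaces requires for $\varphi+\delta_{\gph G}$, while the SNC hypothesis on $\epi\varphi$ or on $\gph G$ supplies the sequential normal compactness needed by that rule. Hence
$$
\partial\Phi(\bar x,\bar y)\subset\partial\varphi(\bar x,\bar y)+N((\bar x,\bar y);\gph G),
$$
and similarly for $\partial^\infty$. Writing any $(x^*,0)\in\partial\Phi(\bar x,\bar y)$ as $(x_1^*,y^*)+(x_2^*,-y^*)$ with $(x_1^*,y^*)\in\partial\varphi(\bar x,\bar y)$ and $x_2^*\in D^*G(\bar x,\bar y)(y^*)$ gives the upper estimate $x^*\in x_1^*+D^*G(\bar x,\bar y)(y^*)$ of (i); the same bookkeeping with $\partial^\infty\varphi$ yields the singular version, and in case (ii) the whole argument is repeated for each $\bar y\in M(\bar x)$ extracted by semicompactness, producing the union over $\bar y$. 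For (iii), strict differentiability of $\varphi$ collapses $\partial\varphi(\bar x,\bar y)$ to $\{\nabla\varphi(\bar x,\bar y)\}$ and normal regularity of $G$ upgrades the sum rule to equality; the local upper Lipschitzian selection $h$ of $M$, through $\mu(x)\le\varphi(x,h(x))$ with $h(\bar x)=\bar y$ and Lipschitz control of $h(x)-\bar y$, provides the reverse inclusion for Fr\'echet subgradients and thereby forces $\widehat\partial\mu(\bar x)=\partial\mu(\bar x)$, i.e., lower regularity of $\mu$ at $\bar x$.

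The main obstacle is stage (a): lifting Fr\'echet $\varepsilon_k$-normals of $\epi\mu$ at $(x_k,\mu(x_k))$ to Fr\'echet $\varepsilon_k$-normals of $\epi\Phi$ at $(x_k,y_k,\mu(x_k))$ uniformly enough for the weak-star limit to survive. The difficulty is that $(x_k,y_k,\mu(x_k))$ sits on the boundary of $\epi\Phi$ only asymptotically, so the lifted normal must absorb a small vertical slack while simultaneously preserving the zero middle component; tracking this slack is precisely what motivates the SNC hypothesis on either $\epi\varphi$ or $\gph G$ in stage (b), since that is where the qualification condition can fail to close under weak-star limits without sequential normal compactness.
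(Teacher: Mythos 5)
This statement is not proved in the paper at all: it is quoted verbatim as Theorem 7 of Mordukhovich, Nam and Yen (Math.\ Program.\ 116 (2009)) and used as an imported tool, so there is no in-paper argument to compare yours against. That said, your two-stage reduction --- passing to the augmented function $\Phi=\varphi+\delta(\cdot;\gph G)$, showing $(x^*,0)\in\partial\Phi(\bar x,\bar y)$ (resp.\ $\partial^{\infty}\Phi$) via the inner semicontinuity/semicompactness of $M$, and then splitting $\partial\Phi$ by the limiting sum rule in Asplund spaces under the qualification condition and the SNC hypothesis --- is exactly the strategy of the cited source, and the treatment of (iii) via the lower Fr\'echet estimate supplied by the upper Lipschitzian selection together with normal regularity of $G$ is also the standard one.

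One correction to your closing paragraph: the obstacle you identify in stage (a) is not actually there. Since $y_k\in M(x_k)$, one has $\Phi(x_k,y_k)=\varphi(x_k,y_k)=\mu(x_k)$ exactly, so the lifted point $(x_k,y_k,\mu(x_k))$ lies precisely on the graph of $\Phi$, not ``asymptotically on the boundary''; and because every $(u,v,\alpha)\in\epi\Phi$ projects to $(u,\alpha)\in\epi\mu$ while the projection only shrinks the norm in the denominator, a Fr\'echet $\varepsilon_k$-normal $(x_k^*,-\lambda_k)$ to $\epi\mu$ lifts to the $\varepsilon_k$-normal $(x_k^*,0,-\lambda_k)$ to $\epi\Phi$ with no slack to absorb. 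The genuine technical burden of the theorem sits entirely in stage (b): the fuzzy/limiting sum rule for $\varphi+\delta(\cdot;\gph G)$ is where the SNC assumption on $\epi\varphi$ or on $\gph G$ and the qualification condition \eqref{qualification-Thm 7-MNY09} are consumed, and where the weak$^*$ limits must be controlled. With that relocation of the difficulty, your outline is a faithful reconstruction of the proof in the cited reference.
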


Our results on limiting and singular subdifferentials of $-v$ are stated as follows.

\begin{theorem}\label{lim. subdif. HYY17}
Let $\bar p\in \int Y_+$ and $\bar x \in D(\bar p) \setminus \{0\}$ be such that $D(\bar p)\neq \emptyset$, $X_+$ is SNC at $\bar x$, $u$ is upper semicontinuous at $\bar x$, and $D$ is $v$-inner semicontinuous at $(\bar p, \bar x)$. Assume that either ${\rm hypo}\, u$ is SNC at $(\bar x,\varphi(\bar x))$ or $X$ is finite-dimensional, and the qualification condition
\begin{equation}\label{qualification-2}
\partial^{\infty,+}u(\bar x)\cap N(\bar x, X_+)=\{0\}
\end{equation}
is satisfied.
Then, the following assertions hold:
\begin{itemize}
\item [{\rm (i)}] If $\langle \bar p, \bar x\rangle = 1$, then 
\begin{align}
\partial(-v)(\bar p)&\subset \displaystyle\bigcup_{x^*\in \partial^+ u(\bar x)}\{\lambda \bar x  \;:\; \lambda\geq 0,\; x^*-\lambda \bar p \in N(\bar x, X_+)\}\label{lim. subgradients-upper inclusion-a},\\
\partial^{\infty} (-v) (\bar p) &\subset \bigcup_{x^*\in \partial^{\infty,+}u(\bar x)}\{\lambda \bar x  \;:\; \lambda\geq 0,\; x^*-\lambda \bar p \in N(\bar x, X_+)\}\label{sing. subgradients-upper inclusion-a};
\end{align}
\item[{\rm (ii)}] If $\langle \bar p, \bar x\rangle <1$, then 
\begin{equation}\label{lim. subgradients-upper inclusion-b}
\partial(-v)(\bar p)\subset \{0\},
\end{equation}
\begin{equation}\label{sing. subgradients equality}
\partial^{\infty} (-v) (\bar p)=\{0\};
\end{equation}
\item[{\rm (iii)}] If $\langle \bar p, \bar x\rangle <1$ and $\partial^+ u(\bar x)\cap N(\bar x, X_+) = \emptyset$, then 
\begin{equation}\label{lim. subgradients-upper inclusion-c}
\partial(-v)(\bar p)=\emptyset;
\end{equation}
\item[{\rm (iv)}] If $u$ is strictly differentiable at $\bar x$, and the map $D:\dom B \rightrightarrows X_+$ admits a local upper Lipschitzian selection at $(\bar p, \bar x)$, then $(-v)$ is lower regular at $\bar x$ and
\begin{align}\label{lim. subgradients equality}
\partial (-v)(\bar p)=\begin{cases}
\{\langle \nabla u(\bar x), \bar x\rangle\bar x\}\! &\! \mbox{if}\; \langle \bar p, \bar x\rangle = 1\\
\{0\}&\mbox{if}\; \langle \bar p, \bar x\rangle < 1.
\end{cases}
\end{align}
\end{itemize}
\end{theorem}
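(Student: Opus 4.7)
The plan is to mirror the proof of Theorem \ref{Frechet subdif. HYY17} but to invoke Theorem \ref{lim. upper subdif. MNY09} in place of Theorems \ref{Frechet upper subdif. MNY09} and \ref{Frechet subdif. MNY09}. I first cast the consumer problem as the parametric minimization \eqref{MNY_problem} by letting $X^*$ play the role of $X$, $X$ play the role of $Y$, and setting $\varphi(x^*,x):=-u(x)$, $G(x^*):=B(x^*)$. Then $\mu(x^*)=-v(x^*)$, $M(x^*)=D(x^*)$, and from the definitions of the upper subdifferentials one readily computes
\[
\partial\varphi(\bar p,\bar x)=\{0\}\times\bigl(-\partial^+u(\bar x)\bigr),\qquad \partial^\infty\varphi(\bar p,\bar x)=\{0\}\times\bigl(-\partial^{\infty,+}u(\bar x)\bigr).
\]

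Next, I verify the hypotheses of Theorem \ref{lim. upper subdif. MNY09}(i) at $(\bar p,\bar x)\in\gph M$. Reflexivity of $X$ makes both $X^*$ and $X$ Asplund; upper semicontinuity of $u$ at $\bar x$ gives lower semicontinuity of $\varphi$ at $(\bar p,\bar x)$; closedness of $\gph B$ yields local closedness of $G$; and $v$-inner semicontinuity of $D$ at $(\bar p,\bar x)$ is assumed. The SNC requirement on $\epi\varphi$ reduces, via the product structure $\epi\varphi=X^*\times\epi(-u)$ and the fact that the Banach space $X^*$ is SNC at every point, to SNC of $\epi(-u)$ at $(\bar x,-u(\bar x))$, which is equivalent (via the linear isomorphism $(x,s)\mapsto(x,-s)$) to SNC of ${\rm hypo}\,u$ at $(\bar x,u(\bar x))$; in the finite-dimensional case this is automatic.

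The key step is the translation of the qualification condition \eqref{qualification-Thm 7-MNY09} into \eqref{qualification-2}. A pair $(0,-x^*)$ with $x^*\in\partial^{\infty,+}u(\bar x)$ belongs to $-N((\bar p,\bar x);\gph B)$ precisely when $0\in D^*B(\bar p,\bar x)(-x^*)$. Using the coderivative formula \eqref{code. of budget map formula} (valid since $\bar p\in\int Y_+$, $\bar x\neq 0$, and $X_+$ is SNC at $\bar x$) combined with $\bar x\neq 0$, a short case analysis on whether $\langle\bar p,\bar x\rangle=1$ or $\langle\bar p,\bar x\rangle<1$ shows that in either case this inclusion forces $x^*\in N(\bar x;X_+)$. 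Hence \eqref{qualification-2} is equivalent to \eqref{qualification-Thm 7-MNY09}, which is the main technical point of the proof.

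With all hypotheses verified, Theorem \ref{lim. upper subdif. MNY09}(i) yields
\[
\partial(-v)(\bar p)\subset\bigcup_{x^*\in\partial^+u(\bar x)}D^*B(\bar p,\bar x)(-x^*),\qquad \partial^\infty(-v)(\bar p)\subset\bigcup_{x^*\in\partial^{\infty,+}u(\bar x)}D^*B(\bar p,\bar x)(-x^*).
\]
Plugging \eqref{code. of budget map formula} into these unions produces \eqref{lim. subgradients-upper inclusion-a}--\eqref{sing. subgradients-upper inclusion-a} when $\langle\bar p,\bar x\rangle=1$ and \eqref{lim. subgradients-upper inclusion-b} when $\langle\bar p,\bar x\rangle<1$; the equality in \eqref{sing. subgradients equality} follows by combining the upper inclusion with the fact that $0\in\partial^\infty(-v)(\bar p)$ since $-v$ is finite at $\bar p$. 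For (iii), the hypothesis $\partial^+u(\bar x)\cap N(\bar x;X_+)=\emptyset$ makes every set $D^*B(\bar p,\bar x)(-x^*)$ in the first union empty by \eqref{code. of budget map formula}. For (iv), strict differentiability of $u$ at $\bar x$ makes $\varphi$ strictly differentiable at $(\bar p,\bar x)$ with $\nabla\varphi(\bar p,\bar x)=(0,-\nabla u(\bar x))$, Theorem \ref{code. of budget map} supplies the normal (graphical) regularity of $B$ at $(\bar p,\bar x)$, and the assumed local upper Lipschitzian selection of $D$ allows me to invoke Theorem \ref{lim. upper subdif. MNY09}(iii) to obtain lower regularity of $-v$ and the equality $\partial(-v)(\bar p)=D^*B(\bar p,\bar x)(-\nabla u(\bar x))$; this is then evaluated via \eqref{code. of budget map formula} and Lemma \ref{neccessary condition lemma} exactly as in the proof of Theorem \ref{Frechet subdif. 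HYY17}(iv) to yield \eqref{lim. subgradients equality}.
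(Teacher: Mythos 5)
Your proposal is correct and follows essentially the same route as the paper: the same reduction to the marginal-function framework \eqref{MNY_problem}, the same verification that \eqref{qualification-2} yields \eqref{qualification-Thm 7-MNY09} via the coderivative formula \eqref{code. of budget map formula} with $\bar x\neq 0$, and the same application of parts (i) and (iii) of Theorem \ref{lim. upper subdif. MNY09} together with Lemma \ref{neccessary condition lemma} for assertion (iv). The only cosmetic difference is in handling the SNC hypothesis (you reduce $\epi\varphi=X^*\times\epi(-u)$ to SNC of ${\rm hypo}\,u$ and note finite-dimensionality makes it automatic, whereas the paper uses SNC of $G=B$ in the finite-dimensional case), and both variants satisfy the ``either--or'' hypothesis of Theorem \ref{lim. upper subdif. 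MNY09}(i).
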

\begin{proof}

Let $\bar p\in \int Y_+$ and $\bar x \in D(\bar p)\setminus \{0\}$ satisfy the assumptions of the theorem. At the beginning of the proof of Theorem \ref{Frechet subdif. HYY17}, we have transformed the consumer problem in \eqref{initial problem} to the minimization problem \eqref{MNY_problem}, where  $\varphi(x^*, x)=-u(x)$ for $(x^*, x)\in X^*\times X$, $G(x^*)=B(x^*)$, $\mu(x^*)=-v(x^*)$, and $M(x^*)=D(x^*)$  for all $x^*\in X^*$. 

 Now, we will show that the limiting and singular sudifferential of $-v$ at $\bar p$ can be estimated by assertion (i) of Theorem~\ref{lim. upper subdif. MNY09}. Since $X$ is a reflexive Banach space, so is $X^*$. Hence, $X$ and $X^*$ are Asplund spaces. Moreover, $B$ is locally closed around $(\bar p, \bar x)$ because $\gph B$ is closed and $(\bar p, \bar x)\in \gph B$. Since $u$ is upper semicontinuous at $\bar x$,  $\varphi$ is lower semicontinuous at $(\bar p, \bar x)$. 

If ${\rm hypo}\, u$ is SNC at $(\bar x,\varphi(\bar x))$, then $\epi\varphi$ is SNC at $(\bar p, \bar x, \varphi(\bar p, \bar x))$. If $X$ is finite-dimensional, $B$ is SNC at $(\bar p, \bar x)$. 

Letting $(\bar p, \bar x)$ play the role of $(\bar x, \bar y)$, we now show that \eqref{qualification-2} implies \eqref{qualification-Thm 7-MNY09}. The latter means that
\begin{equation}\label{qualification-Thm 7-MNY09-a}
\partial^{\infty}\varphi(\bar p, \bar x)\cap (-N((\bar p, \bar x); \gph B))=\{(0, 0)\}.
\end{equation}
The inclusion ``$\supset$'' is trivial. Take any $(x, x^*)\in X\times X^*$ belonging to the left-hand side of \eqref{qualification-Thm 7-MNY09-a}. On one hand, since $\partial^{\infty}\varphi(\bar p, \bar x)=\{0\}\times \partial^{\infty}(-u)(\bar x)$, one has $x=0$ and $x^*\in \partial^{\infty}(-u)(\bar x)$. Hence, $-x^*\in \partial^{\infty, +}(-u)(\bar x)$. On the other hand, as $(-x,-x^*)\in N((\bar p, \bar x); \gph B)$ and $x=0$, one has $(0, -x^*)\in N((\bar p, \bar x); \gph B)$. Hence, $0 \in D^*B(\bar p, \bar x)(x^*)$. Combining this with \eqref{code. of budget map formula} implies $-x^*\in N(\bar x, X_+)$. Thus, one has $-x^*\in\partial^{\infty, +}(-u)(\bar x) \cap N(\bar x, X_+)$. So, by \eqref{qualification-2} one obtains $x^*=0$. We have  shown that $(x, x^*)= {(0, 0)}$; hence the inclusion ``$\subset$'' in \eqref{qualification-Thm 7-MNY09-a} is true.

Since all the assumptions for the validity of assertion (i) of Theorem \ref{lim. upper subdif. MNY09} are satisfied, we have
\begin{align}
\partial (-v) (\bar p)& \subset \bigcup\left\{x+D^*B(\bar p, \bar x)(x^*) : (x, x^*) \in \partial \varphi (\bar p, \bar x) \right\},\label{aaa}\\
\partial^{\infty} (-v) (\bar p)& \subset \bigcup\left\{x+D^*B(\bar p, \bar x)(x^*) : (x, x^*) \in \partial^{\infty} \varphi (\bar p, \bar x) \right\}\label{bbb}.
\end{align}
Since $\partial \varphi (\bar p, \bar x)=\{0\}\times \partial(-u)(\bar x)$, and $\partial^{\infty} \varphi (\bar p, \bar x)=\{0\}\times \partial^{\infty}(-u)(\bar x)$, the inclusions \eqref{aaa} and \eqref{bbb} respectively imply 
\begin{align}
\partial (-v) (\bar p)& \subset \bigcup_{x^*\in \partial(-u)(\bar x)}D^*B(\bar p, \bar x)(x^*),\label{ccc}\\
\partial^{\infty} (-v) (\bar p)& \subset \bigcup_{x^*\in \partial^{\infty}(-u)(\bar x)} D^*B(\bar p, \bar x)(x^*)\label{ddd}.
\end{align}
\hskip0.7cm (i) If $\langle \bar p, \bar x\rangle = 1$, then \eqref{ccc}, \eqref{ddd}, and \eqref{code. of budget map formula} imply 
\begin{align*}
\partial(-v)(\bar p)&\subset \displaystyle\bigcup_{x^*\in \partial(-u)(\bar x)}\{\lambda \bar x  \;:\; \lambda\geq 0,\; x^*+\lambda \bar p \in -N(\bar x, X_+)\},\\
\partial^{\infty} (-v) (\bar p)& \subset \bigcup_{x^*\in \partial^{\infty}(-u)(\bar x)}\{\lambda \bar x  \;:\; \lambda\geq 0,\; x^*+\lambda \bar p \in -N(\bar x, X_+)\}.
\end{align*}
As $-\partial(-u)(\bar x)=\partial^+u(\bar x)$ and $-\partial^{\infty}(-u)(\bar x)=\partial^{\infty,+}(-u)(\bar x)$, these inclusions yield \eqref{lim. subgradients-upper inclusion-a} and \eqref{sing. subgradients-upper inclusion-a}, respectively. 

(ii) If $\langle \bar p, \bar x\rangle < 1$, then by \eqref{code. of budget map formula} one has $D^*B(\bar p, \bar x)(x^*)\subset \{0\}$ for every $x^*\in X^*$. It follows that
$$\bigcup_{x^*\in \partial(-u)(\bar x)}D^*B(\bar p, \bar x)(x^*)\subset \{0\}, \quad \bigcup_{x^*\in \partial^{\infty}(-u)(\bar x)} D^*B(\bar p, \bar x)(x^*)\subset \{0\}.
$$
So, \eqref{ccc} implies \eqref{lim. subgradients-upper inclusion-b}, and \eqref{ddd} yields $\partial^{\infty} (-v) (\bar p)\subset \{0\}$.
Remembering that $\partial^{\infty} (-v) (\bar p)$ always contains the origin, one obtains \eqref{sing. subgradients equality}.
 
(iii) If $\langle \bar p, \bar x\rangle <1$ and $\partial^+ u(\bar x)\cap N(\bar x, X_+) = \emptyset$, then for any $x^*\in \partial(-u)(\bar x)$ one has $x^* \notin -N(\bar x, X_+)$. Therefore, by  \eqref{code. of budget map formula}, $D^*B(\bar p, \bar x)(x^*)=\emptyset$ for all $x^*\in \partial(-u)(\bar x)$. Combining this with \eqref{ccc} implies \eqref{lim. subgradients-upper inclusion-c}.

(iv) Since $\bar p \in \int Y_+$ and $\bar x \in B(\bar p)\setminus \{0\}$, $B$ is graphically regular at $(\bar p, \bar x)$ by Theorem \ref{code. of budget map}. Besides, as $u$ is strictly differentiable at $\bar x$, so is $\varphi$ at $(\bar p, \bar x)$ and one has $\nabla \varphi(\bar p, \bar x)=(0, -\nabla u(\bar x))$. Applying assertion (iii) of Theorem \ref{lim. upper subdif. MNY09}, we have
\begin{equation}\label{kkk}
\partial (-v)(\bar p)= D^*B(\bar p, \bar x)(-\nabla u(\bar x)).
\end{equation}
Since $D^*B(\bar p, \bar x)(-\nabla u(\bar x))$ can be computed via \eqref{code. of budget map formula} with $x^*:=-\nabla u(\bar x)$ and  Lemma~\ref{neccessary condition lemma}, formula \eqref{lim. subgradients equality} follows from \eqref{kkk}.

The proof is complete. \end{proof}
We now give an illustrative example for Theorem \ref{lim. subdif. HYY17}.
\begin{ex}{\rm 
Choose $X=\R, X_+=\R_+$, and define the concave utility function $u: X\rightarrow \R$ by
\begin{equation*}
 u(x)=\begin{cases}
 x & \mbox{if }\quad x \leq 1\\
 1 & \mbox{if }\quad x>1.
 \end{cases}
 \end{equation*}
One has $Y_+=\R_+$ and
 \begin{equation*}
 B(p)=\begin{cases}
 [0, +\infty) & \mbox{if }\quad p=0\\
 [0, 1/p]& \mbox{if }\quad p>0.
 \end{cases}
 \end{equation*}
It is easy to show that
\begin{equation*}
v(p)=\begin{cases}
1 & \mbox{if } \quad 0 \leq p <1\\
1/p & \mbox{if } \quad p\geq 1
\end{cases}
\end{equation*}
and
 \begin{equation*}
D(p)=\begin{cases}
[1, +\infty ) &\mbox{if }\quad  p=0\\
[1, 1/p] &\mbox{if }\quad 0<p<1 \\
\{1/p\}  &\mbox{if } \quad p\geq 1.
\end{cases}
\end{equation*}
\begin{figure}[htbp]	
	\includegraphics[scale=.25]{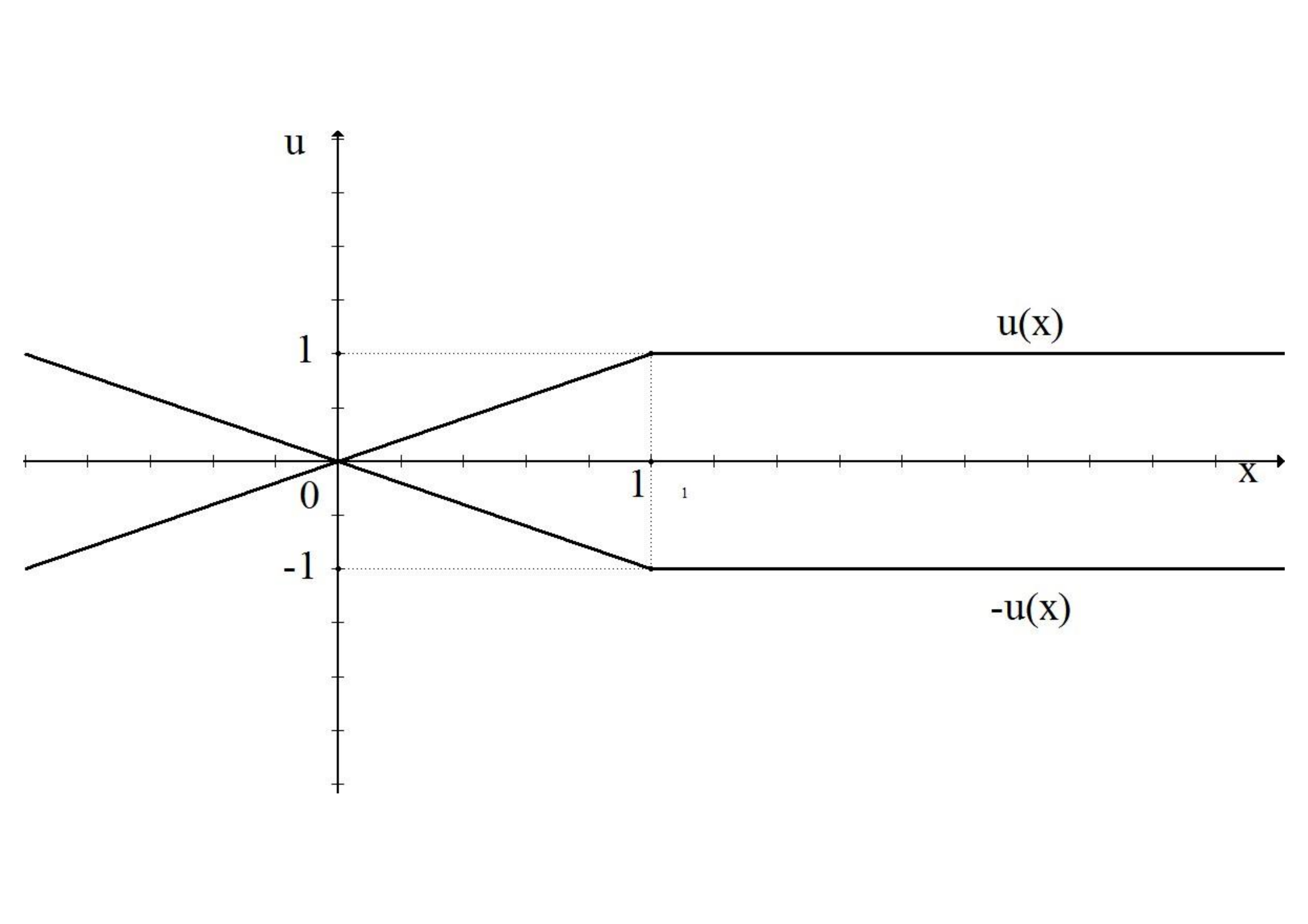}
	\includegraphics[scale=.25]{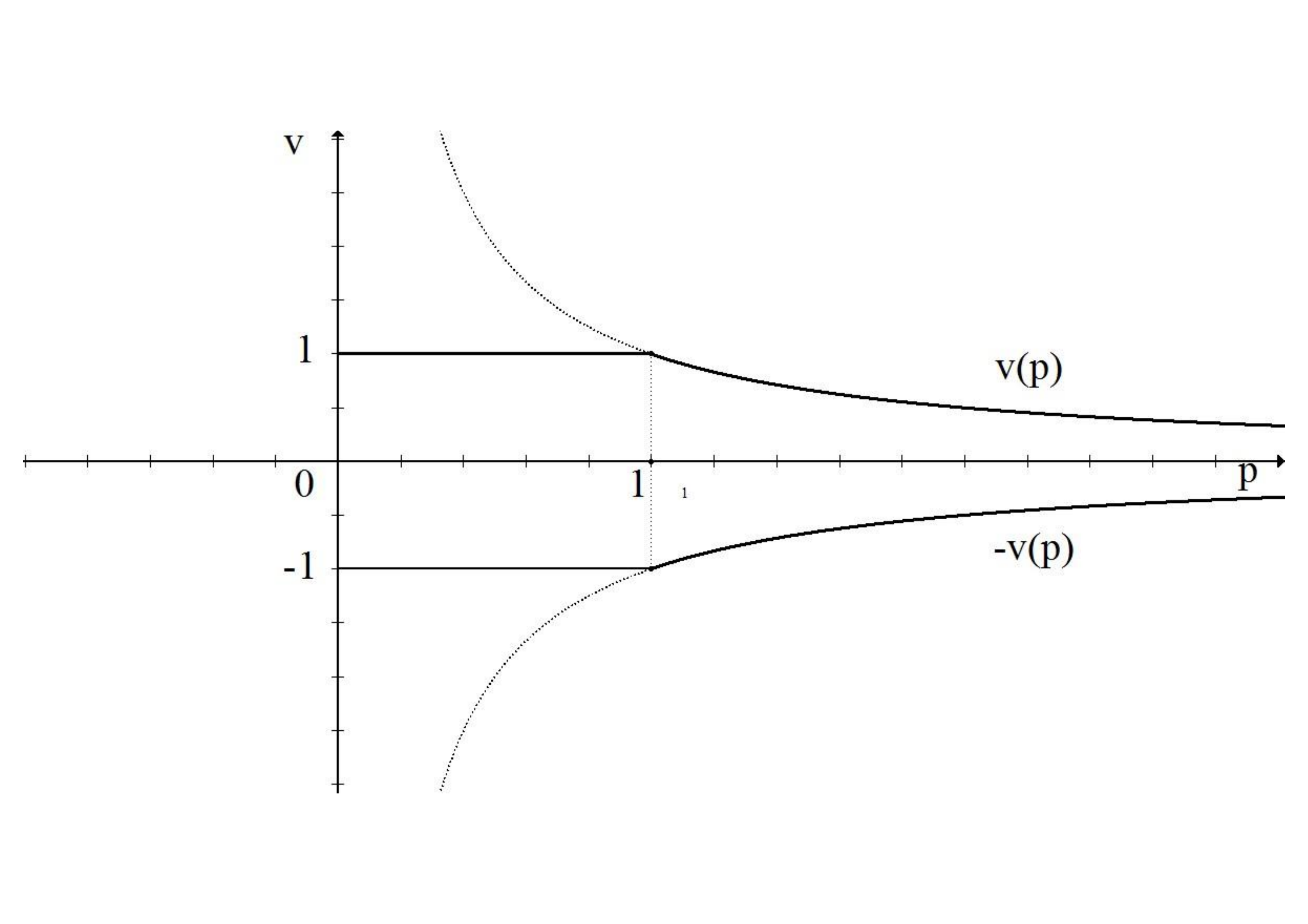}
	\centerline{Figure 1: Utility function and indirect utility function}
	\end{figure} 
	\begin{figure}[htbp]	
		\includegraphics[scale=.25]{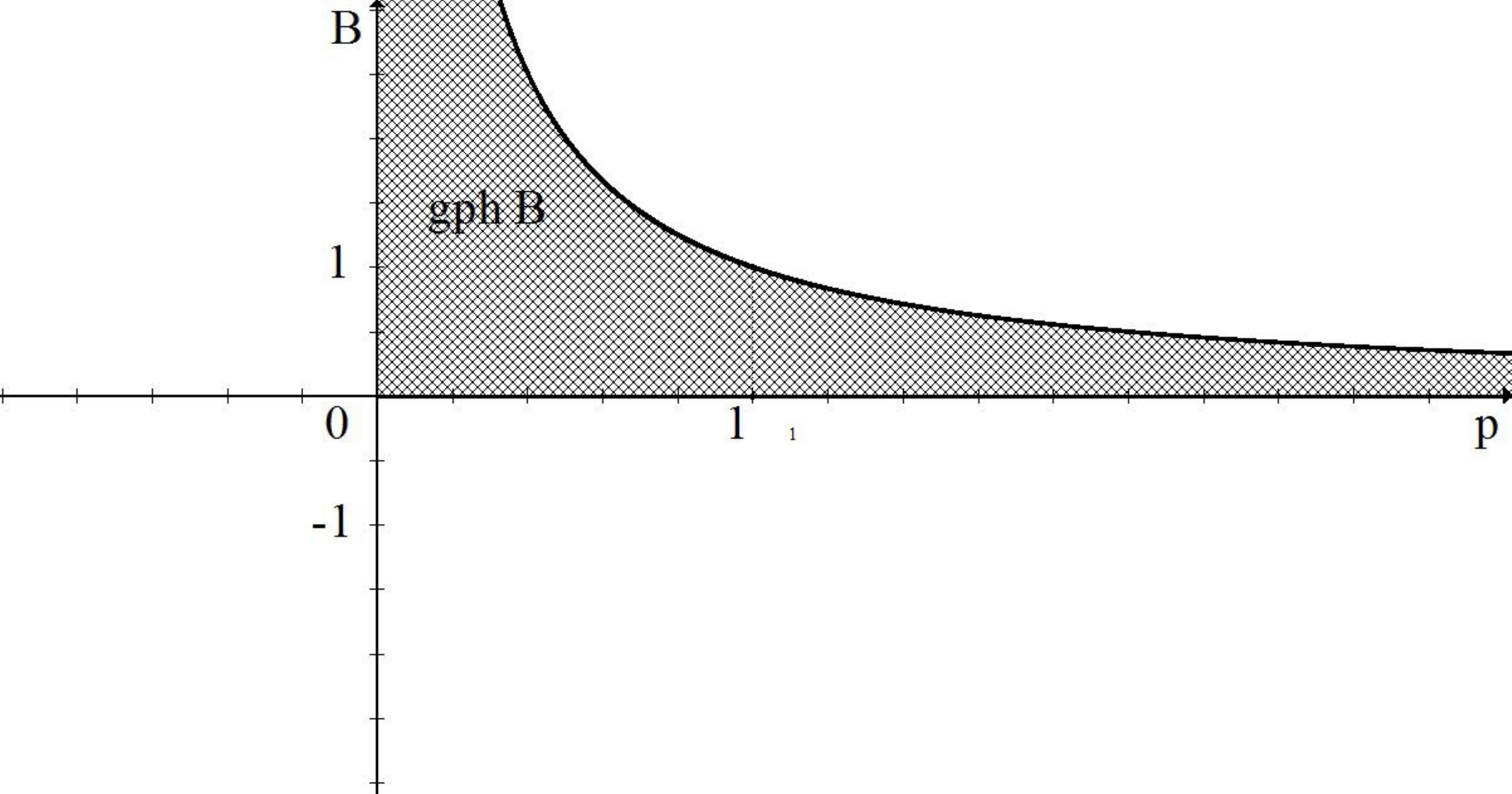}
		\includegraphics[scale=.25]{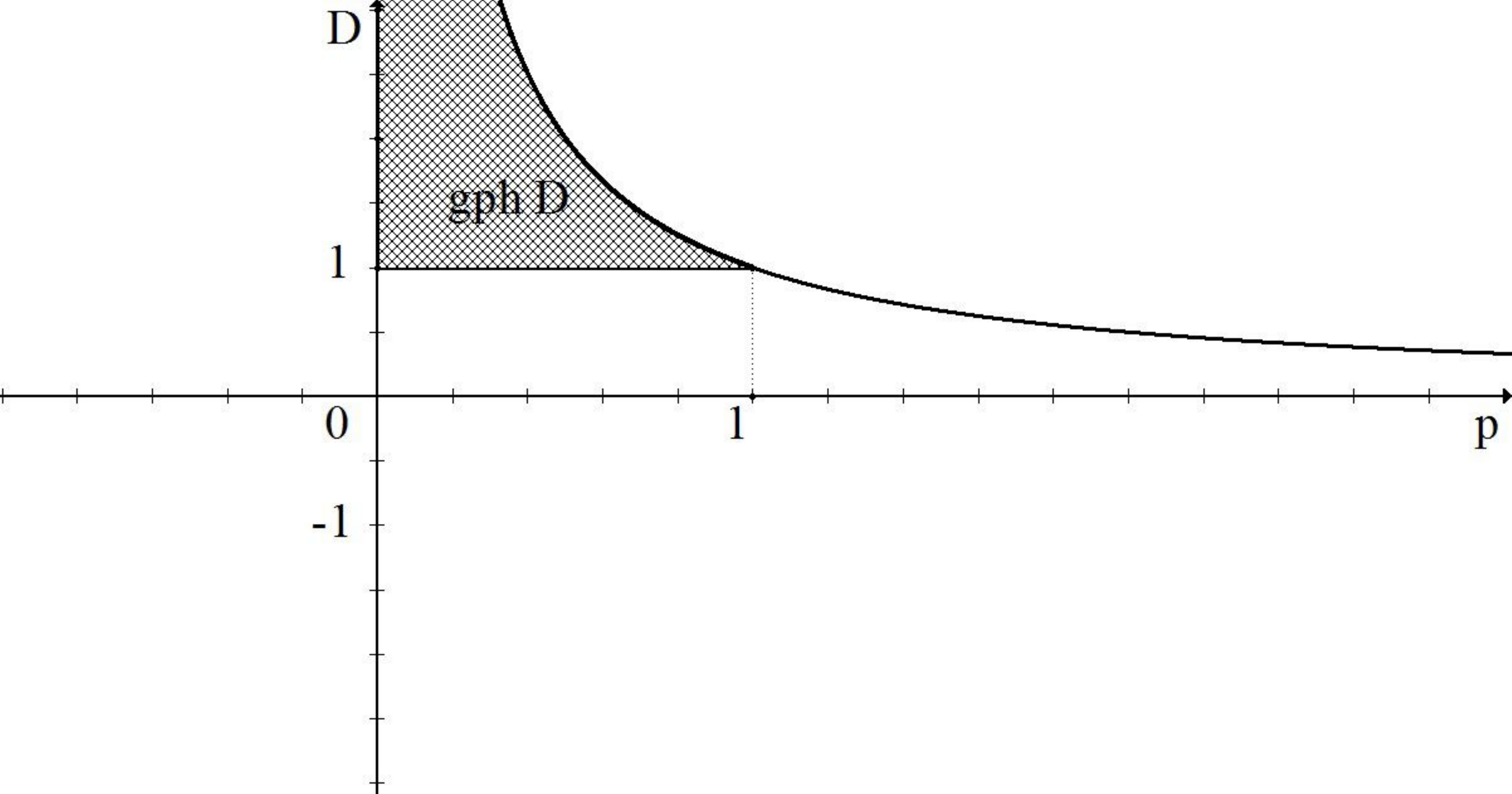}
		\centerline{Figure 2: Budget map and demand map}
	\end{figure} 
	
We see that both functions $v$ and $-v$ are neither convex nor concave on $X_+$. Consider the pair $(\bar p, \bar x)=(1, 1) $ belonging to  $\gph D$. One has $\bar p\in \int Y_+, \bar x \neq 0$, $X_+$ is SNC at $\bar x$, and $\langle \bar p, \bar x\rangle =1$. Besides, as $u$ is locally Lipschitz on $X$, $u$ is upper semicontinuous at $\bar x$ and $\partial^{\infty, +}u(\bar x)=\{0\}$. Thus, the qualification \eqref{qualification-2} is satisfied. Moreover, the formulas of $v$ and $D$ imply that $D$ is $v$-inner semicontinuous at $(\bar p, \bar x)$. 

It follows from definitions that
\begin{equation*}
\partial^+u(\bar x)=[0, 1], \quad \partial (-v)(\bar p)=[0, 1],\quad \mbox{and} \quad \partial^\infty (-v)(\bar p)=\{0\} 
\end{equation*}
while, by direct computation, we have
\begin{align*}
& \displaystyle\bigcup_{x^*\in \partial^+ u(\bar x)}\{\lambda \bar x  \;:\; \lambda\geq 0,\; x^*-\lambda \bar p \in N(\bar x, X_+)\}= [0, 1],\\
& \displaystyle\bigcup_{x^*\in \partial^{\infty,+} u(\bar x)}\{\lambda \bar x  \;:\; \lambda\geq 0,\; x^*-\lambda \bar p \in N(\bar x, X_+)\}= \{0\}.
\end{align*}
Thus, the inclusions \eqref{lim. subgradients-upper inclusion-a} and \eqref{sing. subgradients-upper inclusion-a}  hold as equalities.
}
\end{ex}

Finally, we present a counterpart of Theorem \ref{lim. subdif. HYY17}, where the assumption on the $v$-inner semicontinuity of $D$ at $(\bar p,\bar x)$ is removed. In fact, here one has the $v$-inner semicompactness of $D$ at $\bar p$, which is guaranteed by the assumptions saying that $X$ is finite-dimensional and $\bar p\in \int Y_+$.

\begin{theorem}\label{lim. subdif. HYY17-finite-dim.}
Suppose that $X$ is a finite-dimensional Banach space, the non satiety condition is satisfied, and $u$ is upper semicontinuous on $X_+$. For any $\bar p \in \int Y_+$, if the qualification condition \eqref{qualification-2} is satisfied at every $\bar x \in D(\bar p)$, then  one has
\begin{align}
\partial(-v)(\bar p)&\subset \displaystyle\bigcup_{\bar x \in D(\bar p)}\,\bigcup_{x^*\in \partial^+ u(\bar x)}\{\lambda \bar x  \;:\; \lambda\geq 0,\; x^*-\lambda \bar p \in N(\bar x, X_+)\}\label{lim. subgradients-upper inclusion-d},\\
\partial^{\infty} (-v) (\bar p) &\subset \bigcup_{\bar x \in D(\bar p)}\, \bigcup_{x^*\in \partial^{\infty,+}u(\bar x)}\{\lambda \bar x  \;:\; \lambda\geq 0,\; x^*-\lambda \bar p \in N(\bar x, X_+)\}\label{sing. subgradients-upper inclusion-d}.
\end{align}
\end{theorem}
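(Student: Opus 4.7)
The plan is to mimic the proof of Theorem \ref{lim. subdif. HYY17}, reducing the consumer problem to the marginal--function setting of Mordukhovich--Nam--Yen (with $\varphi(x^*,x)=-u(x)$, $G(x^*)=B(x^*)$, $\mu(x^*)=-v(x^*)$, $M(x^*)=D(x^*)$), but now invoking assertion~(ii) of Theorem \ref{lim. upper subdif. MNY09} rather than assertion~(i). The key difference is that instead of assuming $v$-inner semicontinuity of $D$ at a single point $(\bar p,\bar x)$, I must establish $v$-inner semicompactness of $D$ at $\bar p$, and then verify the remaining hypotheses of Theorem \ref{lim. upper subdif. MNY09}(i) at \emph{every} $(\bar p,\bar x)\in\gph D$ with $\bar x\in D(\bar p)$.

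First I would establish the $v$-inner semicompactness of $D$ at $\bar p$. Since $\bar p\in\int Y_+$, pick $\alpha>0$ with $\bar p-\alpha\bar B_{X^*}\subset Y_+$; then $\langle \bar p,x\rangle\ge \alpha\|x\|$ for every $x\in X_+$. For any sequence $p_k\to\bar p$ with $\|p_k-\bar p\|<\alpha/2$ and any $x\in B(p_k)$, this gives $\alpha\|x\|\le \langle\bar p,x\rangle=\langle p_k,x\rangle+\langle\bar p-p_k,x\rangle\le 1+(\alpha/2)\|x\|$, hence $\|x\|\le 2/\alpha$. Thus $B(p_k)\subset X_+$ is a bounded closed subset of the finite-dimensional space $X$, hence compact. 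Upper semicontinuity of $u$ on $X_+$ forces $u$ to attain its maximum on $B(p_k)$, so $D(p_k)\ne\emptyset$; picking $x_k\in D(p_k)$ and applying the Bolzano--Weierstrass theorem yields a convergent subsequence, as required.

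Second, I would verify the other hypotheses of Theorem \ref{lim. upper subdif. MNY09}(i) at each $(\bar p,\bar x)\in\gph D$ with $\bar x\in D(\bar p)$. By Lemma \ref{NSC} and the NSC, $\langle\bar p,\bar x\rangle=1$; in particular $\bar x\ne 0$. Upper semicontinuity of $u$ at $\bar x$ yields lower semicontinuity of $\varphi$ at $(\bar p,\bar x)$; closedness of $\gph B$ gives local closedness of $B$; and finite-dimensionality of $X^*\times X$ gives the SNC property of $B$ and of $X_+$ at $\bar x$ for free. To transform the qualification condition \eqref{qualification-Thm 7-MNY09} into \eqref{qualification-2}, I would argue exactly as in the proof of Theorem \ref{lim. subdif. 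HYY17}: if $(x,x^*)\in\partial^{\infty}\varphi(\bar p,\bar x)\cap(-N((\bar p,\bar x);\gph B))$, then since $\partial^{\infty}\varphi(\bar p,\bar x)=\{0\}\times\partial^{\infty}(-u)(\bar x)$ one has $x=0$, $-x^*\in\partial^{\infty,+}u(\bar x)$, and $0\in D^*B(\bar p,\bar x)(x^*)$; formula \eqref{code. of budget map formula} at $(\bar p,\bar x)$ (case $\langle\bar p,\bar x\rangle=1$) then forces $-x^*\in N(\bar x,X_+)$, whence $x^*=0$ by \eqref{qualification-2}.

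With all assumptions in place, Theorem \ref{lim. upper subdif. MNY09}(ii) gives
\[
\partial(-v)(\bar p)\subset\bigcup_{\bar x\in D(\bar p)}\{x+D^*B(\bar p,\bar x)(x^*):(x,x^*)\in\partial\varphi(\bar p,\bar x)\},
\]
together with its singular analogue. Since $\varphi(p,x)=-u(x)$ is independent of $p$, $\partial\varphi(\bar p,\bar x)=\{0\}\times\partial(-u)(\bar x)$ and $\partial^{\infty}\varphi(\bar p,\bar x)=\{0\}\times\partial^{\infty}(-u)(\bar x)$. Substituting $x^*\in\partial(-u)(\bar x)=-\partial^{+}u(\bar x)$ (respectively $x^*\in-\partial^{\infty,+}u(\bar x)$) into formula \eqref{code. of budget map formula} at $(\bar p,\bar x)$ and using $\langle\bar p,\bar x\rangle=1$ produces exactly the sets on the right-hand sides of \eqref{lim. subgradients-upper inclusion-d} and \eqref{sing. subgradients-upper inclusion-d}. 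The main obstacle is the verification of $v$-inner semicompactness, which crucially combines the interior-point geometry of $\bar p\in\int Y_+$ with the nonemptiness of $D(p_k)$ coming from upper semicontinuity of $u$ and compactness of $B(p_k)$ in finite dimension; after that, the argument is a faithful parallel of Theorem \ref{lim. subdif. HYY17} with assertion (ii) replacing (i).
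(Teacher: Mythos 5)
Your proposal is correct and follows essentially the same route as the paper: reduce to the Mordukhovich--Nam--Yen marginal-function framework, establish the $v$-inner semicompactness of $D$ at $\bar p$, apply assertion (ii) of Theorem \ref{lim. upper subdif. MNY09}, and then use the NSC (via Lemma \ref{NSC}, giving $\langle\bar p,\bar x\rangle=1$ and $\bar x\neq 0$) together with formula \eqref{code. of budget map formula} to identify the right-hand sides. The only cosmetic difference is that you prove the uniform boundedness of $B(p)$ near $\bar p$ directly from the coercivity estimate $\langle\bar p,x\rangle\ge\alpha\|x\|$ on $X_+$, whereas the paper cites the inclusion $B(p)\subset B(r\bar p)$ from its Proposition~3.2 in \cite{Huong_Yao_Yen_2016}; both yield the same compactness conclusion.
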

\begin{proof}
Fix a vector $\bar p\in \int Y_+$ and suppose that all the assumptions of the theorem are satisfied. We transform \eqref{initial problem} to \eqref{MNY_problem} in same manner as in the proofs of Theorem~\ref{Frechet subdif. HYY17} and Theorem~\ref{lim. subdif. HYY17}.
Let us show that the limiting and singular sudifferential of $-v$ at $\bar p$ can be estimated by assertion (ii) of Theorem~\ref{lim. upper subdif. MNY09}. 
To do so, we have to prove that $D$ is $v$-inner semicompact at $\bar p$. 

On one hand, for a given $r \in (0, 1)$, as shown in the proof of Proposition~3.2 in \cite{Huong_Yao_Yen_2016}, the vector $\bar q:=r\bar p$ belongs to ${\rm int}\,Y_+$, the set $U_1:=\bar q+{\rm int}\,Y_+$ is an open neighborhood of $\bar p$, and $B(p) \subset B(\bar q)$ for every $p \in U_1$. On the other hand, since $X$ is a finite-dimensional Banach space, the weak topology of $X$ coincides with its norm topology. Hence, by the upper semicontinuity of $u$ on $X_+$ and \cite[Proposition~4.1]{Penot_2014}, $D(p)$ is nonempty for every $p \in \int Y_+$. So, the set $U:=U_1\cap \int Y_+$ is an open neighborhood of $\bar p$ and  one has $\emptyset \neq D(p) \subset B(\bar q)$ for every $p \in U$. Take any sequence $p_k \overset{v}{\rightarrow}\bar p$. Without loss of generality, one may assume that $p_k\in U$ for all $k$. Thus, for each $k$, one can select a vector $x_k\in D(p_k)$. Since $\{x_k\} \subset B(\bar q)$, the compactness of $B(\bar q)$ (see \cite[Proof of Proposition~4.1]{Penot_2014}) implies that $\{x_k\}_{k\in \N}$ contains a convergent subsequence. We have thus proved that $D$ is $v$-inner semicompact at $\bar p$.

By assertion (ii) of Theorem \ref{lim. upper subdif. MNY09} where $\varphi(x^*, x)=-u(x)$ for $(x^*, x)\in X^*\times X$, $G(x^*)=B(x^*)$, $\mu(x^*)=-v(x^*)$, $M(x^*)=D(x^*)$  for $x^*\in X^*$, and $\bar x=\bar p$, we have 
\begin{align*}
\partial (-v) (\bar p)& \subset \bigcup_{\bar x \in D(\bar p)}\left\{x+D^*B(\bar p, \bar x)(x^*) : (x, x^*) \in \partial \varphi (\bar p, \bar x) \right\},\\
\partial^{\infty} (-v) (\bar p)& \subset \bigcup_{\bar x \in D(\bar p)}\left\{x+D^*B(\bar p, \bar x)(x^*) : (x, x^*) \in \partial^{\infty} \varphi (\bar p, \bar x) \right\}.
\end{align*}
Since $\partial \varphi (\bar p, \bar x)=\{0\}\times \partial(-u)(\bar x)$, and $\partial^{\infty} \varphi (\bar p, \bar x)=\{0\}\times \partial^{\infty}(-u)(\bar x)$, these inclusions imply 
\begin{align}
\partial (-v) (\bar p)& \subset \bigcup_{\bar x \in D(\bar p)}\, \bigcup_{x^*\in \partial(-u)(\bar x)}D^*B(\bar p, \bar x)(x^*),\label{iii}\\
\partial^{\infty} (-v) (\bar p)& \subset \bigcup_{\bar x \in D(\bar p)}\, \bigcup_{x^*\in \partial^{\infty}(-u)(\bar x)} D^*B(\bar p, \bar x)(x^*)\label{jjj}.
\end{align}
As the NSC is satisfied, we have $\bar x \neq 0$ and $\langle \bar p, \bar x\rangle = 1$  for any $\bar x\in D(\bar p)$. Thus, it follows from \eqref{iii}, \eqref{jjj}, and \eqref{code. of budget map formula} that 
\begin{align*}
\partial (-v) (\bar p)& \subset \bigcup_{\bar x \in D(\bar p)} \bigcup_{x^*\in \partial(-u)(\bar x)}\{\lambda \bar x  \;:\; \lambda\geq 0,\; x^*+\lambda \bar p \in -N(\bar x, X_+)\}\\
\partial^{\infty} (-v) (\bar p)& \subset \bigcup_{\bar x \in D(\bar p)} \bigcup_{x^*\in \partial^{\infty}(-u)(\bar x)} \{\lambda \bar x  \;:\; \lambda\geq 0,\; x^*+\lambda \bar p \in -N(\bar x, X_+)\}.
\end{align*}
Remembering that $-\partial(-u)(\bar x)=\partial^+u(\bar x)$ and $-\partial^{\infty}(-u)(\bar x)=\partial^{\infty,+}(-u)(\bar x)$, one obtains \eqref{lim. subgradients-upper inclusion-d} and \eqref{sing. subgradients-upper inclusion-d} from these inclusions and thus completes the proof.
\end{proof}
\begin{corollary}
Let $\bar p \in \int Y_+$. If $X$ is finite-dimensional, the NSC is satisfied, and $u$ is strictly differentiable on $D(\bar p)$, then
\begin{equation}
\partial(-v)(\bar p)\subset \{\langle\nabla u(\bar x), \bar x\rangle \bar x  \;:\; \bar x \in D(\bar p)\},\label{lim. subgradients-upper inclusion-NSC}
\end{equation}
\begin{equation}
\partial^{\infty} (-v) (\bar p) = \{0\}.\label{sing. subgradients-upper inclusion-NSC}
\end{equation}
\end{corollary}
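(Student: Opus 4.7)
The plan is to derive both estimates as a clean specialization of Theorem \ref{lim. subdif. HYY17-finite-dim.}, exploiting the strong simplifications that strict differentiability of $u$ on $D(\bar p)$ induces in the upper subdifferentials.

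First, I would verify the hypotheses of Theorem \ref{lim. subdif. HYY17-finite-dim.} at each $\bar x \in D(\bar p)$. Because $u$ is strictly differentiable at $\bar x$, it is locally Lipschitz (and hence upper semicontinuous) around $\bar x$, and by standard facts (see \cite[Corollary 1.81]{Mordukhovich_2006a} applied to $-u$) one has $\partial^+ u(\bar x) = \{\nabla u(\bar x)\}$ together with $\partial^{\infty,+} u(\bar x) = \{0\}$. The qualification condition \eqref{qualification-2} then reduces to $\{0\} \cap N(\bar x, X_+) = \{0\}$, which is trivially satisfied. Since $X$ is finite-dimensional, no further SNC hypothesis is needed.

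Next, I would apply Lemma \ref{NSC}: under the NSC, every $\bar x \in D(\bar p)$ satisfies $\langle \bar p, \bar x \rangle = 1$, so in particular $\bar x \neq 0$. Substituting $\partial^+ u(\bar x) = \{\nabla u(\bar x)\}$ into \eqref{lim. subgradients-upper inclusion-d} and invoking Lemma \ref{neccessary condition lemma} to identify
\[
\{\lambda \geq 0 \;:\; \nabla u(\bar x) - \lambda \bar p \in N(\bar x, X_+)\} = \{\langle \nabla u(\bar x), \bar x\rangle\},
\]
one obtains \eqref{lim. subgradients-upper inclusion-NSC} directly.

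For the singular estimate, since $\partial^{\infty,+} u(\bar x) = \{0\}$, inclusion \eqref{sing. subgradients-upper inclusion-d} reduces to
\[
\partial^{\infty}(-v)(\bar p) \subset \bigcup_{\bar x \in D(\bar p)} \{\lambda \bar x \;:\; \lambda \geq 0,\ -\lambda \bar p \in N(\bar x, X_+)\}.
\]
I would then show that $-\lambda \bar p \in N(\bar x, X_+)$ with $\lambda \geq 0$ and $\langle \bar p, \bar x \rangle = 1$ forces $\lambda = 0$: testing the defining inequality of $N(\bar x, X_+)$ at the point $0 \in X_+$ yields $\langle -\lambda \bar p, 0 - \bar x\rangle \leq 0$, i.e., $\lambda \leq 0$, hence $\lambda = 0$. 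This gives $\partial^{\infty}(-v)(\bar p) \subset \{0\}$, and the reverse inclusion is automatic because $|v(\bar p)| < \infty$, establishing \eqref{sing. subgradients-upper inclusion-NSC}.

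The only technical nuisance is the last step (extracting $\lambda = 0$ from $-\lambda \bar p \in N(\bar x, X_+)$); everything else is a mechanical reduction of Theorem \ref{lim. subdif. HYY17-finite-dim.} together with a single invocation of Lemma \ref{neccessary condition lemma}, so no substantial obstacle is expected.
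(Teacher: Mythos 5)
Your proposal is correct and follows essentially the same route as the paper's own proof: both verify the hypotheses of Theorem \ref{lim. subdif. HYY17-finite-dim.} via $\partial^{+}u(\bar x)=\{\nabla u(\bar x)\}$ and $\partial^{\infty,+}u(\bar x)=\{0\}$, invoke Lemma \ref{neccessary condition lemma} for the first inclusion, and reduce the singular estimate to showing $\{\lambda\geq 0 : \lambda\bar p\in -N(\bar x; X_+)\}=\{0\}$. Your explicit test of the normal-cone inequality at $0\in X_+$ is exactly the computation the paper leaves implicit, so there is nothing substantive to add.
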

\begin{proof}
Under the assumptions of the corollary, take any $\bar x \in D(\bar p)$. By the strict differentiability of $u$ on $D(\bar p)$, one has $\partial^+u(\bar x)=\{\nabla u(\bar x)\}$ (see \cite[Corollary 1.82]{Mordukhovich_2006a}), and $u$ is locally Lipschitz around $\bar x$ (see \cite[p. 19]{Mordukhovich_2006a}). Therefore, $\partial^{\infty,+}u(\bar x)=\{0\}$; hence the qualification condition \eqref{qualification-2} is satisfied at $\bar x$. It follows from Theorem~\ref{lim. subdif. HYY17-finite-dim.} that 
\begin{align}
\partial(-v)(\bar p)&\subset \displaystyle\bigcup_{\bar x \in D(\bar p)}\{\lambda \bar x  \;:\; \lambda\geq 0,\; \lambda \bar p \in  \nabla u(\bar x)- N(\bar x, X_+)\}\label{mmm},\\
\partial^{\infty} (-v) (\bar p) &\subset \bigcup_{\bar x \in D(\bar p)} \{\lambda \bar x  \;:\; \lambda\geq 0,\; \lambda \bar p \in -N(\bar x, X_+)\}\label{nnn}.
\end{align}
For any $\bar x \in D(\bar p)$, by Lemma~\ref{neccessary condition lemma} one has $$\{\lambda\geq 0\;:\; \lambda \bar p \in  \nabla u(\bar x)- N(\bar x, X_+)\} = \{\langle\nabla u(\bar x), \bar x\rangle\},$$ while $\{\lambda\geq 0\;:\; \lambda \bar p \in -N(\bar x, X_+)\}=\{0\}$. Thus, \eqref{lim. subgradients-upper inclusion-NSC} follows from \eqref{mmm}, and \eqref{sing. subgradients-upper inclusion-NSC} follows from \eqref{nnn}, because $0 \in \partial^{\infty} (-v) (\bar p)$.

The proof is complete.
\end{proof}

\subsection{Economic meaning of Theorems \ref{lim. subdif. HYY17} and \ref{lim. subdif. HYY17-finite-dim.}}

Let $\Omega\subset X$ be a nonempty subset in a Banach space. One defines the \textit{distance function} $d_\Omega(\cdot): X\to \R$ by setting $d_\Omega(x)=\inf\left\{\lVert x-u\rVert \,:\ u\in \Omega \right\}$. If $\Omega$ is closed, then $x\in\Omega$ if and only if $d_\Omega(x)=0$. Moreover, $d_\Omega(.)$ is Lipschitz on $X$ (see \cite[Proposition~2.4.1]{Clarke_1990}) with the Lipschitz constant 1, i.e., $|d_\Omega(x')-d_\Omega(x)|\leq \| x'-x\|$ for any $x, x' \in X$. The \textit{Clarke tangent cone} (see \cite[p.~51]{Clarke_1990}) to $\Omega$ at  $\bar x\in \Omega$  is the set \begin{align*}
T_C(\bar x; \Omega):=\left\{v\in X \;:\; d^0_\Omega(\bar x;v)=0\right\},
\end{align*}where 
\begin{equation*}
d^0_\Omega(\bar x;v):=\limsup_{\stackrel{\,t\to 0^+}{x\to\bar x}}\dfrac{d_\Omega(x+tv)-d_\Omega(x)}{t}
\end{equation*}
is the generalized  Clarke derivative of $d_\Omega(\cdot)$ at $\bar x$ in direction $v$. The \textit{Clarke normal cone} to $\Omega$ at $\bar x$ is defined by \begin{align*}
N_C(\bar x; \Omega):=\big\{x^*\in X^* \;:\; \langle x^*, v \rangle \le 0,\ \forall v \in T_C(\bar x; \Omega)\}.
\end{align*}

 Let $\varphi: X \rightarrow \overline\R$ be finite at $\bar x \in X$. The \textit{Clarke subdifferential} of $\varphi$ at $\bar x$ is given by
 \begin{equation*}
\partial_C \varphi(\bar x):=\left\{x^*\in X \;:\; (x^*, -1)\in N_C\big((\bar x, \varphi(\bar x)); \epi \varphi \big)\right\}.
 \end{equation*}
 The relationships between the Clarke subdifferential and the Mordukhovich subdifferential is as follows.
 \begin{theorem}{\rm (See \cite[Theorem~3.57]{Mordukhovich_2006a})}
 Suppose that $X$ is an Asplund space and $\varphi: X \rightarrow \overline\R\setminus\{-\infty\}$ is finite and lower semicontinuous at $\bar x \in X$. Then
 \begin{equation}\label{Clar via Mor}
 \partial_C \varphi(\bar x)=\cl^*[\co \partial\varphi(\bar x)+\co\partial^\infty \varphi(\bar x)]=\cl^*\co[\partial\varphi(\bar x)+\partial^\infty \varphi(\bar x)],
 \end{equation}
 where $\co A$ stands for the convex closure of $A\subset X^*$ and $\cl^*A$ denotes the weak$^*$ topological closure of $A$.
 \end{theorem}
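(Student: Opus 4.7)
The plan is to reduce the statement to a classical normal-cone identity in Asplund spaces and then to a bookkeeping calculation on the epigraph. Starting from the definition
$$\partial_C\varphi(\bar x)=\{x^*\in X^*:(x^*,-1)\in N_C(\bar z;\epi\varphi)\},\quad \bar z:=(\bar x,\varphi(\bar x)),$$
I first note that the lower semicontinuity of $\varphi$ at $\bar x$ makes $\epi\varphi$ locally closed around $\bar z$, so all constructions below are well defined. The whole theorem will follow once the Clarke normal cone to $\epi\varphi$ at $\bar z$ is expressed in terms of the Mordukhovich normal cone and the latter is sliced by the hyperplane $X^*\times\{-1\}$.

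The main engine of the proof is the fundamental Asplund-space identity
$$N_C(\bar z;\Omega)=\cl^*\co N(\bar z;\Omega)$$
valid for any set $\Omega$ locally closed around $\bar z$ (this is \cite[Theorem~3.57]{Mordukhovich_2006a} at the set level, whose proof rests on the smooth variational principle and the separable reduction technique peculiar to Asplund spaces). Applying it to $\Omega=\epi\varphi$ reduces the claim to a computation of $\cl^*\co N(\bar z;\epi\varphi)\cap(X^*\times\{-1\})$. For this I would invoke the epigraphical characterization of the limiting and singular subdifferentials, which reads
$$N(\bar z;\epi\varphi)=\bigl\{\beta(x^*,-1):\beta>0,\ x^*\in\partial\varphi(\bar x)\bigr\}\cup\bigl\{(y^*,0):y^*\in\partial^\infty\varphi(\bar x)\bigr\}.$$
Taking convex hulls and using the elementary set identity $\co(A+B)=\co A+\co B$ (applied with $A=\partial\varphi(\bar x)$, $B=\partial^\infty\varphi(\bar x)$, and noting that $B$ is a cone containing $0$), one sees that a vector of the form $(x^*,-1)$ belongs to $\co N(\bar z;\epi\varphi)$ precisely when $x^*\in\co\partial\varphi(\bar x)+\co\partial^\infty\varphi(\bar x)$.

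The delicate step—which I expect to be the main obstacle—is to pass this slicing identity through the weak$^*$ closure, i.e.\ to justify
$$\{x^*:(x^*,-1)\in\cl^*\co N(\bar z;\epi\varphi)\}=\cl^*\{x^*:(x^*,-1)\in\co N(\bar z;\epi\varphi)\}.$$
Intersection with an affine hyperplane does not commute with weak$^*$ closure in general, so the argument has to exploit the specific epigraphical structure: any net $(x^*_\alpha,-\beta_\alpha)$ in $\co N(\bar z;\epi\varphi)$ with $\beta_\alpha\to 1$ and $x^*_\alpha\overset{\omega^*}{\to} x^*$ can be rescaled, after adding an element of $\partial^\infty\varphi(\bar x)\times\{0\}$, so that each $\beta_\alpha$ is exactly $1$; the cone structure of the singular subdifferential absorbs the discrepancy. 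After this step, the first equality in the theorem follows.

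Finally, to derive the second equality $\cl^*[\co\partial\varphi(\bar x)+\co\partial^\infty\varphi(\bar x)]=\cl^*\co[\partial\varphi(\bar x)+\partial^\infty\varphi(\bar x)]$ I would simply reapply the algebraic identity $\co(A+B)=\co A+\co B$ and note that taking weak$^*$ closures preserves the equality. No further analytic input is needed at this stage, so the entire weight of the proof rests on the Asplund-space convexification theorem and the closure–slicing argument described above.
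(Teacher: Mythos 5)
The paper does not prove this statement at all: it is quoted verbatim from Mordukhovich's monograph \cite[Theorem~3.57]{Mordukhovich_2006a}, so there is no internal proof to compare against. Judged on its own terms, your outline follows the standard route (Clarke subdifferential as the slice of $N_C(\bar z;\epi\varphi)$ at level $-1$, the Asplund convexification identity $N_C=\cl^*\co N$, and the epigraphical decomposition of $N(\bar z;\epi\varphi)$ into the $\partial\varphi$-rays and the horizontal part $\partial^\infty\varphi(\bar x)\times\{0\}$), and the algebra at the level of $\co$ (before closure) is fine, as is the derivation of the second equality from $\co(A+B)=\co A+\co B$.

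The genuine gap is exactly at the step you flag as delicate, and your proposed fix does not close it. Given a net $(x^*_\alpha,-\beta_\alpha)\in\co N(\bar z;\epi\varphi)$ with $\beta_\alpha\to 1$ and $x^*_\alpha\overset{\omega^*}{\to}x^*$, you can write $x^*_\alpha=\beta_\alpha u^*_\alpha+w^*_\alpha$ with $u^*_\alpha\in\co\partial\varphi(\bar x)$ and $w^*_\alpha\in\co\partial^\infty\varphi(\bar x)$, and you want to replace $\beta_\alpha u^*_\alpha$ by $u^*_\alpha$; the error term is $(\beta_\alpha-1)u^*_\alpha$ (equivalently, after your rescaling, $(\beta_\alpha^{-1}-1)x^*_\alpha$). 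This tends to $0$ in the weak$^*$ topology only if the net $(u^*_\alpha)$ is norm-bounded, and weak$^*$ convergent \emph{nets} in a dual space need not be bounded — there is no uniform boundedness principle for nets. The assertion that ``the cone structure of the singular subdifferential absorbs the discrepancy'' cannot repair this either, since the discrepancy lives along $\co\partial\varphi(\bar x)$, not along $\partial^\infty\varphi(\bar x)$. This is precisely the point where Mordukhovich's actual proof has to do real work (and where the Asplund hypothesis and the lower semicontinuity enter beyond merely guaranteeing local closedness of the epigraph), so as written the argument establishes only the easy inclusion $\cl^*[\co\partial\varphi(\bar x)+\co\partial^\infty\varphi(\bar x)]\subset\partial_C\varphi(\bar x)$ and leaves the converse unproven.
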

 
Following \cite{Rockafellar_1980}, we write
$
(x, \alpha)\downarrow_\varphi \bar x
$ if $(x, \alpha) \in \epi \varphi$, $x \to \bar x$, and $\alpha \to \varphi(\bar x)$. The \textit{upper subderivative} of $\varphi$ at $\bar x$ with respect to $v\in X$ (see \cite[formula~(4.2)]{Rockafellar_1980}, or \cite[formula~(2.3)]{Rockafellar_1979}) is defined by  
\begin{equation*}\label{upper subderivative}
\varphi^\uparrow(\bar x; v):=\lim_{\varepsilon\to 0^+}\limsup_{\stackrel{t\to 0^+}{(x, \alpha)\downarrow_\varphi \bar x}}\,\inf_{w\in v+\varepsilon B}\dfrac{\varphi(x+tw)-\alpha}{t}.
\end{equation*}
By \cite[Theorem~2]{Rockafellar_1980}, the function $v\to \varphi^\uparrow(\bar x; v)$ is convex, homogeneous, lower semicontinuous, not identical $+\infty$. When $\varphi$ is lower semicontinuous at $\bar x$, $\varphi^\uparrow(\bar x; v)$ can be given by a slightly simpler expression
\begin{equation*}
\varphi^\uparrow(\bar x; v):=\lim_{\varepsilon\to 0^+}\limsup_{\stackrel{t\to 0^+}{x\overset{\varphi}{\to} \bar x}}\,\inf_{w\in v+\varepsilon B}\dfrac{\varphi(x+tw)-\varphi(x)}{t}.
\end{equation*}
\begin{theorem}{\rm (See \cite[p.~97]{Clarke_1990})}
Let $\varphi: X \rightarrow \overline\R$ be finite at $\bar x$. One has $\partial_C \varphi(\bar x)=\emptyset$ if and only if $\varphi^\uparrow(\bar x; 0)=-\infty$. Otherwise, one has 
\begin{equation*}
\partial_C \varphi(\bar x)=\{x^*\in X^* \;:\; \varphi^\uparrow(\bar x; v)\geq \langle x^*, v\rangle,\ \forall v\in X\},
\end{equation*}
and 
\begin{equation}\label{subderivative estimate}
\varphi^\uparrow(\bar x; v)=\sup\{\langle x^*, v\rangle \;:\; x^*\in \partial_C \varphi(\bar x)\}
\end{equation}
for any $v\in X$.
\end{theorem}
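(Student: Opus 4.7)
The plan is to reduce everything to an epigraphical identity and then invoke convex duality. The starting point is the relation
\begin{equation*}
T_C\bigl((\bar x, \varphi(\bar x));\,\epi\varphi\bigr)=\bigl\{(v,\beta)\in X\times\R\;:\;\varphi^\uparrow(\bar x;v)\le \beta\bigr\},
\end{equation*}
which says that the Clarke tangent cone to the epigraph is exactly the epigraph of the sublinear function $v\mapsto \varphi^\uparrow(\bar x;v)$. This identity is the core of Rockafellar's \cite[Theorem 2]{Rockafellar_1980} construction of $\varphi^\uparrow$: one shows the inclusion ``$\subset$'' by definition-chasing with $d^0_{\epi\varphi}$ and the product-norm formula for the distance to $\epi\varphi$, and ``$\supset$'' by realizing an approximating sequence for $\varphi^\uparrow(\bar x;v)$ as a tangent direction to $\epi\varphi$.

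Granting this, first I would derive the characterization of $\partial_C\varphi(\bar x)$. By definition, $x^*\in\partial_C\varphi(\bar x)$ iff $(x^*,-1)\in N_C((\bar x,\varphi(\bar x));\epi\varphi)$, i.e.\ $\langle x^*,v\rangle-\beta\le 0$ for all $(v,\beta)$ in the tangent cone. Substituting the epigraphical description, this is equivalent to $\langle x^*,v\rangle\le \beta$ whenever $\beta\ge\varphi^\uparrow(\bar x;v)$, and hence to $\langle x^*,v\rangle\le\varphi^\uparrow(\bar x;v)$ for all $v\in X$. This gives the displayed formula for $\partial_C\varphi(\bar x)$.

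Next I would prove the supremum formula \eqref{subderivative estimate}. Recall from Rockafellar that $v\mapsto\varphi^\uparrow(\bar x;v)$ is convex, positively homogeneous, lower semicontinuous, and not identically $+\infty$. The inequality ``$\ge$'' in \eqref{subderivative estimate} is immediate from the previous paragraph. For ``$\le$'', assume $\partial_C\varphi(\bar x)\neq\emptyset$, fix $v\in X$, and apply the Hahn--Banach theorem in the form of support functional separation: the convex lsc sublinear function $\varphi^\uparrow(\bar x;\cdot)$ coincides with the supremum of its continuous linear minorants, and each such linear minorant belongs to $\partial_C\varphi(\bar x)$ by the characterization just proved. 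Finally, for the emptiness dichotomy, note that positive homogeneity forces $\varphi^\uparrow(\bar x;0)\in\{0,-\infty\}$: if the value is $-\infty$ no linear functional can be dominated by $\varphi^\uparrow(\bar x;\cdot)$, so $\partial_C\varphi(\bar x)=\emptyset$; conversely, if $\varphi^\uparrow(\bar x;0)=0$ then $\varphi^\uparrow(\bar x;\cdot)$ is a proper convex lsc sublinear function on $X$, which by Hahn--Banach admits at least one continuous linear minorant, giving a point of $\partial_C\varphi(\bar x)$.

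The main obstacle is the epigraphical identity stated at the outset: translating the generalized directional derivative of the distance function (which defines $T_C$) into the limsup-over-$(x,\alpha)\downarrow_\varphi\bar x$ form that defines $\varphi^\uparrow$ requires careful use of the Lipschitz property of $d_{\epi\varphi}$ with the product norm and an approximation argument to pass from points near $\bar x$ in $X$ to pairs near $(\bar x,\varphi(\bar x))$ in $\epi\varphi$. Once this identification is in hand, the rest is a straightforward polarity computation combined with the bipolar theorem for sublinear lsc functions.
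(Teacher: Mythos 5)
The paper states this result without proof, citing Clarke \cite[p.~97]{Clarke_1990}, and your argument is correct and is essentially the canonical proof from that source: reduce to the identity $T_C\bigl((\bar x,\varphi(\bar x));\epi\varphi\bigr)=\epi\varphi^\uparrow(\bar x;\cdot)$ (Rockafellar's Theorem~2, which the paper already invokes for the convexity, homogeneity, and lower semicontinuity of $\varphi^\uparrow(\bar x;\cdot)$), then pass to the polar cone and use the support-function representation of a proper lsc sublinear function. Your handling of the dichotomy via $\varphi^\uparrow(\bar x;0)\in\{0,-\infty\}$ and the propagation of $-\infty$ values along segments is also sound, so there is nothing to add.
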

The \textit{generalized Clarke derivative} of $\varphi$ at $\bar x$ with respect to  $v\in X$ (see \cite[formula~(5.2)]{Rockafellar_1980}) is 
\begin{equation*}
\varphi^0(\bar x; v):=\limsup_{\stackrel{ t\to 0^+}{(x, \alpha)\downarrow_\varphi \bar x}}\dfrac{\varphi(x+tv)-\alpha}{t}.
\end{equation*}
If $\varphi$ is lower semicontinuous at $\bar x$, then the above formula becomes
\begin{equation*}
\varphi^0(\bar x; v):=\limsup_{\stackrel{t\to 0^+}{x \overset{\varphi}{\to} \bar x}}\dfrac{\varphi(x+tv)-\varphi(x)}{t}.
\end{equation*}
The function $\varphi$ is said to be \textit{directionally Lipschitzian} at $\bar x$ with respect to $v\in X$ (see \cite[formula~(5.2)]{Rockafellar_1980}) if $f$ is finite at $x$ and 
\begin{equation*}
\limsup_{\stackrel{w\to v,\; t\to 0^+}{(x, \alpha)\downarrow_\varphi \bar x}}\dfrac{\varphi(x+tw)-\alpha}{t}<+\infty,
\end{equation*}
a condition which can be simplified when $f$ is lower semicontinuous at $x$ to 
\begin{equation*}
\limsup_{\stackrel{w\to v,\; t\to 0^+}{x \overset{\varphi}{\to} \bar x}}\dfrac{\varphi(x+tw)-\varphi(x)}{t}<+\infty.
\end{equation*}
It follows from the definition that $f$ is Lipschitz around $\bar x$ if and only if it is directionally Lipschitzian at $\bar x$ w.r.t. $v=0$. One says that $f$ is \textit{directionally Lipschitzian} at $\bar x$ if there is at least one $v$, not necessarily $0$, such that $f$ is directionally Lipschitzian at $\bar x$ w.r.t. $v$. We refer to \cite[Sect.~6]{Rockafellar_1980} for some conditions guaranteeing that $f$ is directionally Lipschitzian at $\bar x$.

\begin{theorem}\label{Rock. Theorem}{\rm (See \cite[Theorem 3]{Rockafellar_1980})}
Let $\varphi: X \rightarrow \overline\R$ be finite at $\bar x$. If $\varphi$ is directionally Lipschitzian at $\bar x$ w.r.t. $v \in X$, then
$
\varphi^\uparrow(\bar x; v)=\varphi^0(\bar x; v).
$
\end{theorem}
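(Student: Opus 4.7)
The plan is to establish the equality by proving the two inequalities $\varphi^\uparrow(\bar x;v)\le\varphi^0(\bar x;v)$ and $\varphi^\uparrow(\bar x;v)\ge \varphi^0(\bar x;v)$ separately. The first inequality holds without any special hypothesis, whereas the directional Lipschitz assumption is essential for the second.

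For the easy direction, I would observe that $v\in v+\varepsilon B$ for every $\varepsilon>0$, so the value $w=v$ is admissible in the infimum appearing in the definition of $\varphi^\uparrow$, giving
$$\inf_{w\in v+\varepsilon B}\dfrac{\varphi(x+tw)-\alpha}{t}\le\dfrac{\varphi(x+tv)-\alpha}{t}\qquad ((x,\alpha)\in\epi\varphi,\ t>0).$$
Taking $\limsup$ along $(x,\alpha)\downarrow_\varphi\bar x$ and $t\to 0^+$, and then passing to the limit as $\varepsilon\to 0^+$, yields $\varphi^\uparrow(\bar x;v)\le\varphi^0(\bar x;v)$.

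For the reverse inequality, I would pick sequences $t_n\downarrow 0$ and $(x_n,\alpha_n)\downarrow_\varphi\bar x$ along which $(\varphi(x_n+t_nv)-\alpha_n)/t_n$ approaches $\varphi^0(\bar x;v)$, and extract from the directional Lipschitz hypothesis constants $\delta,M>0$ giving the uniform upper bound $(\varphi(y+tw)-\beta)/t\le M$ whenever $(y,\beta)\in\epi\varphi$ is $\delta$-close to $(\bar x,\varphi(\bar x))$, $t\in(0,\delta)$, and $\|w-v\|<\delta$. The key geometric consequence is that the direction $(w,M)$ belongs to the hypertangent cone to $\epi\varphi$ at $(\bar x,\varphi(\bar x))$ for every $w$ in an open neighborhood of $v$. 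The strategy is then to prove the quantitative comparison
$$\inf_{w\in v+\varepsilon B}\dfrac{\varphi(x_n+t_nw)-\alpha_n}{t_n}\ge \dfrac{\varphi(x_n+t_nv)-\alpha_n}{t_n}-M\varepsilon\qquad(n\ \text{large},\ 0<\varepsilon<\delta/2),$$
after which taking $\limsup_n$ and then $\varepsilon\to 0^+$ produces $\varphi^\uparrow(\bar x;v)\ge \varphi^0(\bar x;v)$.

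The principal obstacle lies in the quantitative comparison above: the rewriting $x_n+t_nw=(x_n+t_n(w-v))+t_nv$ shifts the base point by $t_n(w-v)$, a displacement which is small but points in the direction $w-v$ that is generally not close to $v$, so the directional Lipschitz bound cannot be invoked in a single step. My approach would be to construct, via the epigraphical shift property encoded in the directional Lipschitz bound, a companion pair $(y_n,\beta_n)\in\epi\varphi$ with $y_n=x_n+t_n(w-v)$ and $\beta_n$ within $M\varepsilon t_n$ of $\alpha_n$, and staying within the required neighborhood of $(\bar x,\varphi(\bar x))$; since $\varphi(x_n+t_nw)=\varphi(y_n+t_nv)$, the quotient at $w$ then differs from a valid sampling of the $\limsup$ defining $\varphi^0(\bar x;v)$ by at most $M\varepsilon$. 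Maintaining the companion pair $(y_n,\beta_n)$ in the prescribed neighborhood uniformly in $n$, in spite of the fact that the step $t_n(w-v)$ is not itself aligned with $v$, is the delicate technical step; this is essentially the content of Rockafellar's original argument in \cite[Theorem~3]{Rockafellar_1980}.
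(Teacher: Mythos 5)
The paper offers no proof of this statement---it is quoted directly from Rockafellar's Theorem~3---so your proposal has to stand on its own. Your easy direction $\varphi^\uparrow(\bar x;v)\le\varphi^0(\bar x;v)$ is correct. The hard direction, however, contains a genuine gap, and it is not just the ``delicate technical step'' you defer to Rockafellar: the quantitative comparison you set out to prove is false in general, and the mechanism you propose for it points the inequalities the wrong way. The directional Lipschitz hypothesis and the $\limsup$ defining $\varphi^0$ both deliver only \emph{upper} bounds: from a companion pair $(y_n,\beta_n)\in\epi\varphi$ with $y_n=x_n+t_n(w-v)$ you can conclude $\varphi(x_n+t_nw)=\varphi(y_n+t_nv)\le\beta_n+t_n\mu$, and a ``valid sampling of the limsup defining $\varphi^0$'' is only bounded \emph{above} by $\varphi^0(\bar x;v)$ in the limit; neither yields the \emph{lower} bound on $\varphi(x_n+t_nw)$ that your displayed inequality requires. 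Moreover that inequality, with the same $t_n$ on both sides, can fail: take $X=\R$, $\bar x=0$, $v=1$, $\varphi=-k$ with $k(x)=\sum\{2^{-j}: j\ge 1,\ 2^{-j}\le x\}$. Then $\varphi$ is directionally Lipschitzian at $0$ w.r.t.\ $1$ with $\mu=0$ and $\varphi^0(0;1)=0$, but along the maximizing sequence $x_n=2^{-n}$, $\alpha_n=\varphi(x_n)$, $t_n=2^{-n}(1-\varepsilon/2)$ the quotient at $v$ equals $0$ while the infimum over $w\in v+\varepsilon B$ is about $-2$, because the interval $(x_n+t_n,\,x_n+t_n(1+\varepsilon))$ captures the jump of $k$ at $2^{-n+1}$. (The theorem survives because other sequences realize both limsups; your pointwise comparison along a fixed $\varphi^0$-maximizing sequence does not.)

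The repair must change the time step, not merely the base point. Fix $\sigma\in(0,1)$ first, and only then take $\varepsilon<\delta\sigma/(1-\sigma)$, where $\delta,\mu$ come from the directional Lipschitz bound. Given $\lambda>\varphi^\uparrow(\bar x;v)$ and $(x,\alpha)\in\epi\varphi$ near $(\bar x,\varphi(\bar x))$ with $t$ small, the definition of $\varphi^\uparrow$ furnishes $w\in v+\varepsilon B$ with $\varphi(x+(1-\sigma)tw)\le\alpha+(1-\sigma)t\lambda$; from the point $(x+(1-\sigma)tw,\ \alpha+(1-\sigma)t\lambda)\in\epi\varphi$ one then steps by $\sigma t$ in the direction $z=w+(v-w)/\sigma$, which satisfies $\|z-v\|\le\varepsilon(1-\sigma)/\sigma<\delta$ and lands exactly at $x+tv$, giving $\varphi(x+tv)\le\alpha+(1-\sigma)t\lambda+\sigma t\mu$. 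Hence $\varphi^0(\bar x;v)\le(1-\sigma)\lambda+\sigma\mu$, and letting $\sigma\to0^+$ and $\lambda\downarrow\varphi^\uparrow(\bar x;v)$ finishes the proof. The essential idea your proposal misses is this order of quantifiers ($\sigma$ before $\varepsilon$), which is what allows a transverse displacement of size $O(\varepsilon t)$ to be absorbed into a forward step whose direction is still within $\delta$ of $v$.
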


Turning back to the indirect utility function $v$ of \eqref{initial problem}, we suppose that $v$ is finite at $\bar p \in \int Y_+$. Combining formulas \eqref{lim. subgradients-upper inclusion-a} and \eqref{sing. subgradients-upper inclusion-a} (resp., \eqref{lim. subgradients-upper inclusion-d} and \eqref{sing. subgradients-upper inclusion-d}), we obtain an upper estimate for the sum $\partial(-v) (\bar p)+\partial^{\infty} (-v) (\bar p)$. Thus, if $v$ is upper semicontinuous at $\bar p$ and $v(p)\neq +\infty$ for all $p\in Y_+$, then by \eqref{Clar via Mor} we get an upper estimate for the Clarke subdifferential $\partial_C(-v)(\bar p)$. In other words, {\textit{we find a subset $A$ of $X$ in an explicit form such that $\partial_C(-v)(\bar p)\subset A$.} Since $0\in\partial^{\infty} (-v) (\bar p)$, one has $\partial_C(-v)(\bar p)\neq\emptyset$ if and only if $\partial(-v) (\bar p)\neq\emptyset$. Therefore, if the latter is valid then, for any  $q\in X^*$, by  \eqref{subderivative estimate} one has
\begin{equation*}\label{Rock. estimate 1}
(-v)^\uparrow(\bar p; q)=\sup_{\xi \in \partial_C(-v)(\bar p)}\langle \xi, q\rangle;
\end{equation*}
hence
\begin{equation*}\label{Rock. estimate 1'}
(-v)^\uparrow(\bar p; q)\leq \displaystyle\sup_{\xi \in A}\,\langle \xi, q\rangle=:\alpha(q).
\end{equation*} 
So, if $v$ is upper semicontinuous at $\bar p$, then
\begin{equation}\label{Rock. estimate 2}
\lim_{\varepsilon\to 0^+}\limsup_{\stackrel{t\to 0^+}{p\overset{v}{\to} \bar p}}\,\inf_{w\in q+\varepsilon B}\dfrac{(-v)(p+tw)-(-v)(p)}{t}=\sup_{\xi \in \partial_C(-v)(\bar p)}\langle \xi, q\rangle,
\end{equation}
and
\begin{equation}\label{Rock. estimate 2'}
\lim_{\varepsilon\to 0^+}\limsup_{\stackrel{t\to 0^+}{p\overset{v}{\to} \bar p}}\,\inf_{w\in q+\varepsilon B}\dfrac{(-v)(p+tw)-(-v)(p)}{t}\leq \alpha(q).
\end{equation}
If, in addition, $-v$ is directionally Lipschitzian at $\bar p$ w.r.t. $q$, then by Theorem \ref{Rock. Theorem} one has $(-v)^\uparrow(\bar p; q)=(-v)^0(\bar p; q)$. Hence, \eqref{Rock. estimate 2} and \eqref{Rock. estimate 2'} respectively yield
\begin{equation}\label{Rock. estimate 3}
\limsup_{\stackrel{t\to 0^+}{p \overset{v}{\to} \bar p}}\dfrac{(-v)(p+tq)-(-v)(p)}{t}=\sup_{\xi \in \partial_C(-v)(\bar p)}\langle \xi, q\rangle,
\end{equation}
and 
\begin{equation}\label{Rock. estimate 3a}
\limsup_{\stackrel{t\to 0^+}{p \overset{v}{\to} \bar p}}\dfrac{(-v)(p+tq)-(-v)(p)}{t}\leq \alpha(q).
\end{equation}
Formula \eqref{Rock. estimate 3} is equivalent to 
\begin{equation*}
\liminf_{\stackrel{t\to 0^+}{p \overset{v}{\to} \bar p} }\dfrac{v(p+tq)-v(p)}{t}=-\sup_{\xi \in \partial_C(-v)(\bar p)}\langle \xi, q\rangle,
\end{equation*}
and therefore
\begin{equation*}
d^-v(\bar p; q):=\liminf_{ t\to 0^+}\dfrac{v(\bar p+tq)-v(\bar p)}{t}\geq-\sup_{\xi \in \partial_C(-v)(\bar p)}\langle \xi, q\rangle,
\end{equation*}
with $d^-v(\bar p; q)$ denoting the \textit{lower Dini directional derivative} of $v$ at $\bar p$ in direction $q$.
Similarly, \eqref{Rock. estimate 3a} is equivalent to
\begin{equation*}
\liminf_{\stackrel{t\to 0^+}{p \overset{v}{\to} \bar p}}\dfrac{v(p+tq)-v(p)}{t}\geq-\alpha (q),
\end{equation*}
and thus we have the following lower estimate for the lower Dini directional derivative of $v$ at $\bar p$ in direction $q$:
\begin{equation}\label{lower Dini estimate}
d^-v(\bar p; q)\geq -\alpha(q).
\end{equation}

In Subsection \ref{eco meaning 1}, we have explained the economic meaning of an upper estimate for the upper Dini directional derivative $d^+v(\bar p; q)$. Analogously, \textit{the lower Dini directional derivative $d^+v(\bar p; q)$ can be interpreted as an lower bound for the instant rate of the change of the maximal satisfaction of the consumer.} Therefore, the estimate given by \eqref{lower Dini estimate} reads as follows: \textit{If the current price is $\bar p$ and the price moves forward a direction $q\in X^*$, then the instant rate of the change of the maximal satisfaction of the consumer is bounded below by the real number $-\alpha(q)=\displaystyle\inf_{\xi \in A}\,\langle -\xi, q\rangle$. Here,  the set $A$ is an upper bound for the Clarke subdifferential $\partial_C(-v)(\bar p)$, which is provided either by Theorem \ref{lim. subdif. HYY17}, or by Theorem \ref{lim. subdif. HYY17-finite-dim.}.}

\end{document}